\DeclareMathAlphabet{\mathpzc}{OT1}{pzc}{m}{it}
\newcommand{\np}{\medskip\noindent}
\newcounter{eqcounter}[section]
\renewcommand{\theeqcounter}{\arabic{section}.\arabic{eqcounter}}
\renewenvironment{equation}{\medskip\noindent\refstepcounter{eqcounter}\makebox[0pt][l]{({\bf\theeqcounter})}
\begin{minipage}[b]{\textwidth}$$}{$$\end{minipage}\medskip\noindent}
\providecommand{\binom}[2]{{#1\choose#2}}
\renewcommand{\geq}{\geqslant}
\renewcommand{\leq}{\leqslant}
\newcommand{\Comp}{{\operatorname{Comp}(\Phi)}}  
\newcommand{\Compf}{{\operatorname{Comp}}} 
\renewcommand{\inf}{\mathrm{inf}}  
\newcommand{\supp}{\operatorname{supp}}
\def\vep{\varepsilon}
\DeclareMathOperator{\Inv}{{Inv}}
\let\Pr\relax  
\DeclareMathOperator{\Pr}{{Pr}}
\DeclareMathOperator{\D}{{D}}
\renewcommand{\AA}{\mathbf{A}} 
\newcommand{\CC}{\mathbf{C}} 
\newcommand{\RR}{\mathbf{R}} 
\newcommand{\ZZ}{\mathbf{Z}} 
\newcommand{\FF}{\mathbf{F}} 
\renewcommand{\AA}{\mathbb{A}} 
\newcommand{\BB}{\mathbb{B}} 
\newcommand{\CC}{\mathbb{C}} 
\newcommand{\DD}{\mathbb{D}} 
\newcommand{\EE}{\mathbb{E}} 
\newcommand{\FF}{\mathbb{F}} 
\newcommand{\GG}{\mathbb{G}} 
\newcommand{\RR}{\mathbb{R}} 
\newcommand{\ZZ}{\mathbb{Z}} 
\newcommand{\DTail}[2]{^{\displaystyle #1}_{\displaystyle #2}}
\newcommand{\Esix}[6]
{{\begin{array}{@{}c@{}c@{}c@{}c@{}c@{\hskip 1pt}}
 & &#2&&\\[-4pt]
 #1&#3&#4&#5&#6
 \end{array}}}
\newcommand{\Eseven}[7]
{{\begin{array}{@{}c@{}c@{}c@{}c@{}c@{}c@{\hskip 1pt}}
 & &#2&&&\\[-4pt]
 #1&#3&#4&#5&#6&#7
 \end{array}}}
\newcommand{\Eeight}[8]
{{\begin{array}{@{}c@{}c@{}c@{}c@{}c@{}c@{}c@{\hskip 1pt}}
 & &#2&&&&\\[-4pt]
 #1&#3&#4&#5&#6&#7&#8
 \end{array}}}
\newcommand{\DFour}[4]{#1#2\DTail{#3}{#4}}
\newcommand{\DFive}[5]{#1#2#3\DTail{#4}{#5}}
\newtheorem{thm}{Theorem}[section]
\newtheorem{theorem}[thm]{Theorem}
\newtheorem{prop}[thm]{Proposition}
\newtheorem{cor}[thm]{Corollary}
\newtheorem{corollary}[thm]{Corollary}
\newtheorem{lemma}[thm]{Lemma}
\newtheorem{proposition}[thm]{Proposition}
\theoremstyle{definition}
\newtheorem{definition}[thm]{Definition}
\newtheorem{example}[thm]{Example}
\newtheorem{remark}[thm]{Remark}
\newcommand{\rk}{{\mathrm{rk\,}}}
\newcommand{\Span}{\mathrm{span\,}}
\newcommand{\ZSpan}{\mathrm{span}_{\ZZ}}
\newcommand{\Gen}{\mathrm{Gen}}
\newcommand{\GPhi}{G(\Phi)}
\newcommand{\GPsi}{G(\Psi)}
\newcommand{\GX}{G(X)}
\newcommand{\innerProd}[2]{\left\langle #1, #2 \right\rangle}
\newcommand{\R}{\mathbb R}
\newcommand{\beq}{\begin{equation}}
\newcommand{\eeq}{\end{equation}}
\newcommand{\fg}{\mathfrak{g}}
\newcommand{\fh}{{\mathfrak{h}}}
\newcommand{\ivan}[1]{\textcolor{red}{Ivan: #1}}
\definecolor{ColePurple}{RGB}{128,51,209}   
\newcommand{\cole}[1]{\textcolor{ColePurple}{Cole: #1}}
\definecolor{DavidGreen}{RGB}{29,162,55}   
\begin{document}

\author[I. Dimitrov]{Ivan Dimitrov}
\address{Department of Mathematics and Statistics, Queen's University}
\email{dimitrov@queensu.ca}

\author[C. Gigliotti]{Cole Gigliotti}
\address{Department of Mathematics, University of British Columbia}
\email{coleg@math.ubc.ca}

\author[E. Ossip] {Etan Ossip}
\address{Mathematical Institute, University of Oxford}
\email{etanossip@gmail.com}

\author[C. Paquette]{Charles Paquette}
\address{Department of Mathematics and Computer Science, Royal Military
College of Canada}
\email{charles.paquette.math@gmail.com}

\author[D. Wehlau]{David Wehlau}
\address{Department of Mathematics and Computer Science, Royal Military
College of Canada}
\email{wehlau@rmc.ca}

\pagestyle{plain} 
\title{\large{Inversion Sets and Quotient Root Systems}}

\subjclass[2020]{Primary 17B22; Secondary 17B20, 17B25, 22F30.}

\begin{abstract}
The main result of this paper is a recursive description of all decompositions 
\[ 
\Delta^+ = \Phi_1 \sqcup \Phi_2 \sqcup \dots \sqcup \Phi_k
\]
of the positive roots $\Delta^+$ of an arbitrary root system $\Delta$ into a disjoint union of inversion sets. 
Such decompositions play a central role in geometric invariant theory (GIT) in connection with studying 
the Littlewood-Richardson cone and related problems. This work can be considered as a continuation of
\cite{DDMRWW} in which similar questions were studied for root systems of type $\mathbb{A}$. The methods
of \cite{DDMRWW} rely on properties of permutations and are not transferable to an arbitrary root system.

\np
In order to develop a type-independent approach, we go beyond root systems and consider quotient 
root systems (QRSs for short). We study subsets of positive roots in an arbitrary QRS $R$.
We prove that every $\Phi \subseteq R^+$ can be represented in a canonical way as an inflation and develop 
methods to study recursively properties of such subsets. We extend the notion of an inversion 
to subsets of any QRS, i.e., beyond the case when a Weyl group is associated with $R$. If $\Phi \subseteq R^+$
is an inversion set, we introduce a graph $\GPhi$ and endow the set $\Comp$ of connected components of $\GPhi$
with a partial addition. The resulting monoid-like structure $(\Comp, +)$ is a further generalization of root 
systems beyond QRSs. We study in detail the properties of $(\Comp, +)$ and their applications to studying 
the properties of $\Phi$. In particular we investigate the relationship between 
$\Phi$ being primitive and $\Phi$ being irreducible. Apart from describing recursively all
decompositions of $\Delta^+$ into the disjoint union of inversion sets, we provide
applications to geometric invariant theory and 
derive enumerative results which may be of independent interest.

\np
Keywords: Root system, Quotient root system, Inversion set, Biconvex set.
\end{abstract}
\maketitle
\setcounter{tocdepth}{1}   
\tableofcontents

\section{Introduction} \label{sec: intro}

\np 
Let $\Delta$ be a root system with Weyl group $W$ and set of positive roots $\Delta^+$. The inversion
set of $w \in W$ is the set 
\[\Phi(w):= \{ \alpha \in \Delta^+ \, | \, w(\alpha) \in \Delta^- \} = \Delta^+ \cap w^{-1} (\Delta^-) \ ,\]
where $\Delta^- = -\Delta^+$. If $\Delta$ is of type $\AA_n$, then $\Delta^+$ can be identified with the set
\[ \{(i,j) \in \ZZ \times \ZZ \, | \, 1 \leq i < j \leq n+1 \} \ ,\]
$W = S_{n+1}$ is the symmetric group on $n+1$ elements, and the inversion set of $\sigma \in S_{n+1}$
is the set
\[\Phi(\sigma) = \{(i,j) \in \Delta^+ \, |\, \sigma(i) > \sigma(j) \} \ . \]
Inversion sets we first introduced by Kostant, \cite{K1}, in the context of Lie algebra cohomology.
Because of their relationship to cohomologies of homogeneous varieties, inversion sets 
have recently arisen in various problems in geometric invariant theory (GIT), e.g. when 
describing faces of the Littlewood-Richardson cone or studying the Belkale-Kumar product on full flag 
varieties, see for example \cite{BK} and \cite{DR}.

\np
Describing the decompositions of $\Delta^+$ as a disjoint union of inversion sets arises
in several of the problems mentioned above, see \cite{DDMRWW} and the references therein for details.
In \cite{DDMRWW} a complete description of such decompositions was obtained in terms of inflations of permutations.
The results for type $\AA$ are also carried over to types $\BB$ and $\CC$ by exploiting a realization of each
of the latter root systems in terms of root systems of type $\AA$ with additional symmetries. Unfortunately, the methods
developed there do not apply to root systems of type $\DD$ and to exceptional root systems. 

\np
The motivation and starting 
point for this paper was to provide a uniform approach to studying decompositions of $\Delta^+$  as
into a disjoint union of inversion sets, i.e., as

\begin{equation} \label{eq:1.11}
\Delta^+ = \Phi(w_1) \sqcup \Phi(w_2) \sqcup \dots \sqcup \Phi(w_k) \ .
\end{equation}
We develop machinery which allows us to provide in Theorem~\ref{theorem: main theorem} a recursive 
description of all decompositions \eqref{eq:1.11}. This description is then used to prove some statements
which have appeared recently in works related to GIT. 
In \cite{FR} a property of inversion sets
$\Phi(w_1), \Phi(w_2), \Phi(w_3)$ that satisfy \eqref{eq:1.11} for $k = 3$
is central for the main result. In the first
version of the paper the authors provided a very long case-by-case proof of that property. In the latest
version of \cite{FR} a much simpler proof based on the first version of this paper is provided. As an illustration of
the methods and results we develop, we provide two short proofs of the same result, see 
Proposition~\ref{prop:Ressayre}. In \cite{HP} the authors discuss the conjecture that the sum of the numbers of right
descents of elements $w_1, w_2, \dots, w_3$ of the Weyl group $W$ of $\Delta$ 
which satisfy \eqref{eq:1.11} equals the rank of $\Delta$. They provide a long
proof in the case when $\Delta$ is a root system of type $\mathbb{A}$. Propostion~\ref{prop: basis from decomposition}
establishes the same result for all finite Weyl groups. (In the case when $\Delta$ is of type
$\mathbb{A}$, Propostion~\ref{prop: basis from decomposition} was proved in \cite{DDMRWW}.)

\np
In order to extend the results of \cite{DDMRWW}, we introduce the notion of inflation of
inversion sets which generalizes inflations of permutations to inflations of arbitrary subsets of
positive roots in a (quotient) root system. 
The operation of taking an inflation of permutations, 
which plays a central role in \cite{DDMRWW}, is nothing but the composition
in the operad whose $n$-ary operations are the permutations of $n$ elements. This operad structure yields
a canonical expression of every permutation as a composition (in the sense of operads) of permutations. In
particular, one can introduce the notion of a primitive permutation --- a permutation which cannot be represented as a
non-trivial composition (in \cite{DDMRWW}  the term ``simple permutation'' is used instead of primitive permutation).
It turns out that the property of a permutation being primitive is closely related (though not identical) to
the corresponding inversion set being irreducible, i.e., one that does not decompose into the disjoint union
of two non-empty inversion sets. Following this point of view, it is natural to look for an operad structure 
on root systems beyond these of type $\mathbb{A}$ and to explore the relationship between compositions in the operad and 
decompositions of inversion sets. The first major obstacle to realizing the idea outlined above is that the 
class of root systems is not large enough to define the corresponding operad. It turns out that the natural extension
to the class of quotient root systems (QRSs for short) allows us to extend the notion of inflation 
to subsets of arbitrary QRSs. In this paper we do not use the language of operads but the interested reader will
be able to fill in the details.

\np 
Roughly speaking, if $\Delta$ is a root system with 
a base $\Sigma$ and $I$ is a subset of $\Sigma$, the QRS $\Delta/I$ is the image of $\Delta$
under the natural projection $\pi_I: \Span \Delta \to \Span \Delta/ \Span I$. QRSs have been studied extensively
in connection with Lie theory, \cite{K}, and simplicial hyperplane arrangements, \cite{Cu} and the references therein, 
to mention a few. The notions of positive roots, bases, etc.
extend to QRSs and the counterparts of Weyl groups are certain groupoids called Weyl groupoids.
Quotients of QRSs are also well-defined and taking quotients of QRSs is functorial. In particular,
for any QRS $R$ and any subset $I$ of a given base $S$ of $R$ we have the quotient $R/I$, the
subsystem $R_I$, and the surjection $\pi_I: R \to R/I$. 

\np
Let $\Psi \subseteq (R/I)^+$ and $X \subseteq R_I^+$, where $(R/I)^+$ and $R_I^+$ denote the positive roots
of $R/I$ and $R_I$, respectively, determined by a fixed basis $S$ of $R$. Define (see Definition~\ref{def: inflation})
the {\it inflation of $\Psi$ by $X$} as the subset 

\begin{equation} \label{eq:inflation}
\inf_I^S(\Psi, X) = \{\alpha \in R^+\backslash R_I \, | \, \pi_I(\alpha) \in \Psi\} \cup \{\alpha \in R \, |\, \alpha \in X\} 
\subseteq R^+ \ .
\end{equation}
Every $\Phi \subseteq R^+$ can be expressed as an inflation with $I = \emptyset$ and $I = S$ as follows:
$\Phi = \inf_\emptyset^S(\Phi, \emptyset) = \inf_S^S(\emptyset, \Phi)$.
We call these two expressions {\it trivial inflations}. A subset $\Phi \subseteq R^+$ is {\it primitive}
if it cannot be represented as a non-trivial inflation. In Theorem~\ref{theorem: canonical inflation} 
we prove that every $\Phi \subseteq R^+$ can be expressed uniquely
as $\Phi = \inf_I^S(\Psi, X)$, where $\emptyset \subseteq I \subsetneq S$ and 
\begin{enumerate}
\item[(i)] $\Psi$ is primitive
\item[] or
\item[(ii)] $\Psi = \emptyset$ or $\Psi = (R/I)^+$ and $I$ is minimal with this property. 
\end{enumerate}

\np 
Let $R$ be a QRS with positive roots $R^+$. By definition, a subset $\Phi \subseteq  R^+$ is an inversion set 
in $R$ if $\Phi$ is both closed and co-closed. If $R=\Delta$ is a root system, $\Phi \subseteq R^+$ is an inversion set
if and only if it is the inversion set of an element $w$ of the Weyl group $W$ of $\Delta$. 
Inflation preserves inversion sets: $\Phi = \inf_I^S(\Psi, X)$ is an inversion set in $R$ if and only if 
$\Psi$ and $X$ are inversion sets in $R/I$ and $R_I$ respectively. 

\np
In order to study various properties and attributes of inversion sets like their supports, canonical form
\eqref{eq:inflation}, decompositions into a disjoint union of inversion sets, etc., we introduce the graph 
$\GPhi$ associated with an inversion set $\Phi \subseteq R^+$. The vertices of $\GPhi$ are the elements of $\Phi$.
Two vertices $\alpha, \beta \in \Phi$ are connected with an edge if and only if $\alpha - \beta \in \pm \Phi^c$.
Let $\Comp$ denote the set of connected components of $\GPhi$. It turns out that the set $\Comp$ inherits
an order and a partial addition from $R^+$. Remarkably, the order and the partial addition enjoy many of the 
properties of the order and the addition on $R^+$. Since $\Comp = R^+$ when $\Phi = R^+$, we can consider
$\Comp$ as a further extension of the notion of a root system beyond QRSs. Despite the similarities, there 
are some important distinctions between $R^+$ and $\Comp$. The first one is that the addition is only partially
defined. This is actually not too surprising --- the addition in $R^+$ is also only partially defined. However,
in the case of $\Comp$, we do not know of a natural lattice that contains $\Comp$ and such that the partial addition
on $\Comp$ is inherited from the addition on the lattice. 
The second (and related)
problem is that cancellation rules do not always hold in $\Comp$: we may have $A+B = A+C$ without $B = C$
or even $A+A = A$ for $A \neq 0$. The core of the paper is devoted to studying the properties of addition in $\Comp$
and their applications to representing $\Phi$ as an inflation and to decomposing $\Phi$ into a disjoint union of
inversion sets. In particular, we establish the relationship between two important properties of an inversion set:
being primitive and being irreducible. 

\np
We complete the paper with two sections on applications of the machinery developed earlier. The first 
set of applications is to three problems arising in geometric representation theory --- we are
able to prove statements that have arisen in other works and up until now have been beyond reach. 
The second set of applications is to derive some enumerative results which may be of interest to combinatorialists.
For example, we arrive at an analog of Catalan numbers arising from roots systems of types $\mathbb{D}$. 

\np
We expect that the methods developed in this paper may find further applications. For example, 
the notion of inflation is not limited to inversion sets and it is reasonable to ask what other 
classes of subsets of positive roots will yield interesting combinatorial and/or algebraic structures similar
to $\Comp$. Another interesting direction is to understand whether the partial addition on $\Comp$ can 
be understood as the restriction of an additive structure on an appropriate lattice.

\np 
Here is a brief overview of the contents of the paper.  

\np 
Sections~\ref{sec: prelim} and \ref{sec: paths}
provide background and preliminary material. In Sections~\ref{sec: prelim} we introduce QRSs and inversion sets,
discuss some of their properties and list references for further information on these topics.
In Section~\ref{sec: paths} we study sums of roots in a QRS focusing on 
how roots are connected by paths $[\alpha; \kappa_1, \dots, \kappa_k; \beta]$, i.e., 
sequences of roots $\alpha, \beta, \kappa_1, \dots, \kappa_k$ in a QRS $R$ such that 
$\beta = \alpha + \kappa_1 +\dots + \kappa_n$ and $\alpha + \kappa_1 +\dots + \kappa_j \in R$
for every $1 \leq j \leq k$. We believe that most of the results in this section are known for 
root systems but we do not know of good references, so we provide short proofs in the generality of QRSs.

\np
In Section~\ref{sec: inflations} we introduce the notion of inflation, see \eqref{eq:inflation}.
Proposition~\ref{prop: inflation of an inflation is an inflation 1} establishes that inflation is 
functorial. 
Theorem~\ref{theorem: canonical inflation} proves that every $\Phi \subseteq R^+$ can be represented
canonically as an inflation. In Proposition~\ref{prop: inflations and inversions} we show that 
taking inflations preserves inversion sets, i.e., if $\Phi = \inf_I^S(\Psi, X)$ then $\Phi$
is an inversion set if and only if both $\Psi$ and $X$ are inversion sets. We also discuss 
the set $\Gen(\Phi)$ of generating sets of $\Phi$ defined as
\[
\Gen(\Phi) = \{ I \subseteq S \, | \, \Phi = \inf_I^S(\Psi, X) {\text { for some }} \Psi {\text { and }} X\} \ ,
\]
see Propositions~\ref{prop: Gen(Phi) for primitive Psi} and \ref{prop: Gen(Phi) from hyperplane}.

\np
Sections~\ref{sec: graphs} -- \ref{sec:additive structure through} are the core of the paper: for
any inversion set $\Phi \subseteq R^+$ we introduce the graph $\GPhi$, endow the set of its components $\Comp$
with a partial order and a partial addition and study the resulting structures. 
In Section~\ref{sec: graphs} we introduce the graph $\GPhi$ and study the partial order on
$\Comp$ inherited from $R^+$, see Propositions~\ref{prop: partial order equivalences} and 
\ref{prop:partial order on components is an order}.
In Section~\ref{sec: addition} we prove that $\Comp$ inherits a partial addition from $R^+$. For brevity we will refer to this
operation simply as ``addition'', remembering that for $A, B \in \Comp$, $A+B$ is not necessarily defined. 
Since in the case when $\Phi = R^+$,
$\Comp = R^+$ and the additions on the two sets are the same, one may consider the set $\Comp$
as a further generalization of QRSs. The addition is commutative by definition and in 
Proposition~\ref{prop: component 2 of 3 rule} we prove that it is associative whenever the respective 
expressions are all defined. We also introduce the notion of a simple component and prove that
every element of $\Comp$ is a sum of simple components; see Propositions~\ref{Simple Comp. equiv.} and
\ref{prop: all components are standard sums of simple components}. 
Proposition~\ref{prop: first cancellation rule for component addition} provides a sufficient condition for
the cancellation rule to be true and Proposition~\ref{prop: Repeated addition satisfies standard properties}
discusses the ``multiples'' $kA$ of a component $A$. 
In Section~\ref{sec: components as inflations} we discuss the relationship between the properties of an
inversion set being primitive and being irreducible. In Proposition~\ref{prop: sums of inflations is an inflation}
we show that if $A, B, A+B \in \Comp$, then $\Gen(A) \cap \Gen(B) \subseteq \Gen(A+B)$ and in 
Proposition~\ref{prop: components are inflated by the canonical} we show that if $\Phi$
is canonically inflated from $I$, then every element of $\Comp$ is inflated from $I$. The main results
of this part of the paper, Theorem~\ref{theorem: irreducibility equivalent conditions},
establishes conditions for $\Phi$ to be irreducible. As a consequence, we prove in
Corollary~\ref{cor: primitive implies irreducible}
that every primitive $\Phi$ is irreducible and, conversely, if both $\Phi$ and $\Phi^c$ are
irreducible, then $\Phi$ is primitive. In the final 
Section~\ref{sec:additive structure through}
of this part of the paper we show that the addition on $\Comp$ is completely determined by the additions on
$\Compf(\Psi)$ and $\Compf(X)$ where $\Phi=\inf_I^S(\Psi,X)$ is the canonical form of $\Phi$; see 
Propositions~\ref{prop: pi_I is surjective on components and bijective when I is canonical} and 
\ref{addition within or without X}. Using this we also prove that addition in $\Comp$, whenever defined,
is independent of the order of summands and their parenthesization; see
Proposition~\ref{prop: addition is independent of bracketing}.

\np 
In Sections~\ref{sec: GIT} and \ref{sec: enumerate} we present some applications of our results. 
In Section~\ref{sec: GIT} these are applications to problems arising in GIT: Theorem~\ref{theorem: main theorem}
provides a recursive description of all decomposition like \eqref{eq:1.11}, Proposition~\ref{prop: basis from decomposition}
proves a conjecture on right descents of elements $w_1, \dots, w_k \in W$ that satisfy \eqref{eq:1.11},
and Proposition~\ref{prop:Ressayre} proves a statement from \cite{FR}.
In Section~\ref{sec: enumerate} we derive some enumerative results:
we evaluate the number of fine decompositions of $\Delta^+$ into
inversion sets for any root system $\Delta$, 
provide a recursive formula for the number of primitive inversion sets, and 
calculate the number of fine inversion sets. 
Note that, for a root system of type $\AA_n$, the number of fine decompositions of 
$\Delta^+$ equals the $n$-th Catalan numbers, so the numbers of fine decompositions of $\Delta^+$
for other root systems may be considered as analogs and extensions of Catalan numbers.

\section{Preliminaries} \label{sec: prelim}

\subsection{Quotient root systems} Quotient root systems (QRSs for short), originally introduced by Kostant, provide the
natural setting in which inversion sets of roots can be studied recursively. Below we provide the
necessary notation and background on QRSs; for more details, we refer the reader to \cite{K} and \cite{DF}.

\np 
Recall, \cite{B} and \cite{H}, that a {\it root system} is a finite set $\Delta$ of nonzero vectors in a Euclidean space 
$E$ with inner product $\langle \cdot, \cdot \rangle$ that satisfies the following properties:

\begin{enumerate}
    \item[(i)] $\Delta$ spans $E$;
    \item[(ii)] If $\alpha \in \Delta$ and $r \in \RR$ satisfy $r\alpha \in \Delta$, then $r = \pm 1$;
    \item[(iii)] If $\alpha, \beta \in \Delta$, then $\sigma_\alpha(\beta) \in \Delta$, where $\sigma_\alpha$
    denotes the reflection about $\alpha$;
    \item[(iv)] For $\alpha, \beta \in \Delta$, 
    $2\, \frac{\langle \alpha, \beta \rangle}{\langle \alpha, \alpha \rangle} \in \ZZ$. 
\end{enumerate}
The elements of $\Delta$ are called {\it roots}.

\np 
\begin{remark} \label{rem:zero_is_a_root}
It is convenient to modify slightly the definition above by requiring that the zero vector belongs to $\Delta$.
For the rest of the paper we will work with the assumption that all root systems (and quotient root systems, see below)
contain the zero vector. This convention requires obvious minor adjustments in the definitions and statements about root systems. 
For example, in conditions (ii) -- (iv) of the definition of a root system, we need to assume that $\alpha \neq 0$.
Note that, with this convention, $\Delta = \{0\}$ is a root system of rank zero.
\end{remark}

\np 
A subset $\Sigma \subseteq \Delta$ is a {\it base} of $\Delta$ if it is a basis of $E$ and every nonzero element of $\Delta$
is an integer combination of elements of $\Sigma$ with non-negative or non-positive coefficients only. A base $\Sigma$
yields the partition $\Delta = \Delta^+ \sqcup\{0\} \sqcup \Delta^-$ of $\Delta$ into {\it positive roots}, the
zero root, and {\it negative roots}. In other words, 
the positive and negative roots are the nonzero roots which are expressed respectively as 
non-negative and non-positive combinations of elements of $\Sigma$.

\np 
Given a subset $I \subseteq  \Sigma$, we introduce the following notation:
\begin{enumerate}
\item[--] $E_I$ and $E/I$ denote respectively the span of $I$ and its orthogonal complement in $E$;
\item[--] $\pi_i: E \to E/I$ is the projection defined by the decomposition $E = E_I \oplus E/I$;
\item[--] $\Delta_I: = \Delta \cap E_I$, $\Delta/I := \pi_I(\Delta) \subseteq E/I$.
\end{enumerate}
Clearly, $\Delta_I \subseteq E_I$ is a root system with a base $\Sigma_I:= I$ and positive roots 
$\Delta_I^+ = \Delta_I \cap \Delta^+$ with respect to $\Sigma_I$. 

\np
A {\it quotient root system} (or QRS for short) is a set of the form $R=\Delta/I$ for some root system $\Delta$ and
some subset $I$ of a base of $\Delta$. We call the elements of a QRS \emph{roots}.
Note that the quotient $\Delta/I$ depends on $I$ but not on $\Sigma$ as long as $I$ is a subset of a base of $\Delta$.
The \emph{rank} of $R$, denoted $\mathrm{rk} \,R$, is the dimension of the ambient vector space $E/I$.
Furthermore, a given QRS $R$ may be the quotient of different root systems, more precisely, 
quotients of two different root systems may be isomorphic, 
cf.~\cite{DF}. 
The subset $\Delta/I$ of $E/I$ is the \emph{quotient of $\Delta$ with respect to $I$}. 
Finally, $\Delta/\emptyset = \Delta$ and $\Delta/\Sigma = \{0\}$.
In particular, every root system is a QRS.

\np 
QRSs exhibit many properties similar to properties of root systems but also 
differ in some aspects, e.g. QRSs allow for multiples of roots which are also roots, for example, a QRS
may contain both $\alpha$ and $2\alpha$. For a detailed discussion on the properties of QRSs, see \cite{K} or \cite{DF}.

\np 
As for root systems, a \emph{base} $S$ of a QRS $R$
is a subset of $R$ that forms a basis of the underlying vector space $E/I$, 
and such that every root can be written 
as a non-negative or non-positive integral combination of elements of $S$. A base $S$ of $R$
determines a set of \emph{positive} roots $R^+$, namely the nonzero elements of $R$ which are
non-negative integral combinations of elements of $S$; the \emph{negative} roots $R^-$ are the nonzero elements of $R$ which are
non-positive integral combinations of elements of $S$. 
As in the case of root systems, we declare $0 \in R$ to be neither positive nor negative and 
we have the disjoint union decomposition $R = R^+ \sqcup \{0\} \sqcup R^-$.
A {\it positive system} $P$ in $R$ is a subset which is closed under addition, i.e., $(P+P) \cap R \subseteq P$ and
$R = P \sqcup \{0\} \sqcup (-P)$. Equivalently, $P \subseteq R$ is a positive system if and only 
if $P$ consists of all elements of $R$ in an open half-subspace of $E$ whose boundary hyperplane
does not contain any nonzero roots.
Positive systems in $R$ are in a bijection with bases of $R$,
see \cite{DF} for a detailed discussion of positive systems in $R$ and bases of $R$.

\np 
It is easy to see that, given a base $\Sigma$ of $\Delta$ and $I \subseteq \Sigma$, 
\[S:=\Sigma/I := \{ \pi_I(\beta) \, | \, \beta \in \Sigma \backslash I\}\]
is a base of $R$. Moreover,
$R^+ = \pi_I(\Delta^+) \backslash \{0\}$ and $R^- = \pi_I(\Delta^-) \backslash \{0\}$. 

\np 
Throughout the paper we will use $R$ and $S\subseteq R$ to denote respectively a QRS $R$ and a base $S$ of $R$,
while we will reserve
$\Delta$ and $\Sigma$ for root systems. 
If $R, S, \Delta$, and $\Sigma$ are used in the same discussion, $R$ is assumed to be the quotient of $\Delta$ with
respect to some $I \subseteq \Sigma$ and $S = \Sigma/I$.

\np 
If $R$ is a QRS with a base $S$ and $I\subseteq S$, we can define the 
quotient $R/I$ of $R$ with respect to $I$ in the same way we defined quotients of root systems, 
Taking quotients of QRSs is functorial:  
If $I \subseteq J \subseteq S$, then
\[(R/I)/(J/I) \cong R/J \ ,\]
where $J/I \subseteq S/I$ in the QRS $R/I$. 
Similarly, we can define a subsystem $R_I:=R \cap \Span I$ of $R$. Clearly, $R_I$ also is a QRS
and taking subsystems is functorial.

\np 
Let $R$ be a QRS in a Euclidean space $E$ and let $E$ be decomposed orthogonally
as $E = E^1 \oplus \dots \oplus E^s$. We say that $R = R^1 \times \dots \times R^s$
if $R = R^1 \cup \dots \cup R^s$, where $R^i := E^i \cap R$. 
If $R = R^1 \times \dots \times R^s$ and $S^i$ is a base of $R^i$, 
then $S = S^1 \cup \dots \cup S^s$ is a base of $R$. Moreover, every base of $R$ is of this form.
We say that $R$ is \emph{irreducible} if it cannot be written as $R = R' \times R''$ for two
nontrivial subsystems $R'$ and $R''$. Every nontrivial QRS decomposes uniquely
as $R = R^1 \times \dots \times R^s$, where each $R^i$ is irreducible. 
Since the decomposition of a QRS into irreducible components interplays well with most properties,
throughout the paper we will implicitly assume that the QRSs we consider are irreducible,  unless otherwise stated. Whenever necessary,
we will comment explicitly on the differences that arise for reducible QRSs.

\np 
The irreducible root systems
fall into the infinite series $\AA_l, \BB_l, \CC_l, \DD_l$ and the exceptional ones $\EE_6, \EE_7, \EE_8, \FF_4, \GG_2$.
The irreducible QRSs are described in \cite{DF}. 
If $R$ is an irreducible QRS, then as observed below, any quotient of $R$ is itself an irreducible QRS.

\np 
\begin{definition} \label{Order on roots}
    Let $R$ be a QRS with base $S = \{\theta_1, \dots, \theta_r\}$. We define a partial order on the roots by declaring
for two roots $\alpha = \sum_{i=1}^r a_i \theta_i$ and
$\beta = \sum_{i=1}^r b_i \theta_i$, that 
$\alpha \leq \beta$ if $a_i \leq b_i$ for $1 \leq i \leq r$.
\end{definition}

\np 
\begin{remark} \label{Remark:coefficient vector}
    If $\Delta$ is a root system, we can display a root in $\Delta$ by labeling the nodes of the 
    Coxeter-Dynkin diagram with its coefficients in the given base.
For instance, for the root system $\mathbb{D}_5$, every root can be represented in the form
\[\DFive {a \ }  {b \ }  {c \ } d e \ \]
and the highest root of $\mathbb{E}_6$ is represented as
\[\Esix 1 2 2 3  2 1 \ .\]
\end{remark}
\np 
\begin{definition}
    The \emph{support} of a root $\alpha = \sum_{i=1}^\ell a_i\theta_i$, with respect to a given base $S$, is 
\[\supp(\alpha) = \{\theta_i \in S \mid a_i \ne 0\}.\]
If $E$ is a set of roots, then we define its \emph{support} $\supp(E)$ to be the union of the supports of the roots in $E$.
\end{definition}

\np 
As in the case of root systems, if $R$ is a QRS and $\alpha \in R$, then 
$\supp(\alpha)$ is the base of an irreducible subsystem. Moreover, $R$ is 
irreducible if and only if there is a unique maximal root in $R$; in that case
its support is the corresponding base of $R$. In particular, the quotient of an
irreducible QRS is itself irreducible.

\np
We finish this subsection by recording two results that will be useful in the sequel.

\np 
\begin{prop}
\label{prop: if <a, b> < 0 then a + b is a root}
    Let $\alpha, \beta \in R$. If $\innerProd{\alpha}{\beta} < 0$, then $\alpha + \beta \in R$. 
    If $\innerProd{\alpha}{\beta} > 0$, then $\alpha - \beta \in R$.
\end{prop}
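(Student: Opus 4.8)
The plan is to lift everything to the ambient root system $\Delta$, where the statement is classical, while keeping track of how the inner product behaves under the projection. Write $R=\Delta/I$ and let $\pi:=\pi_I\colon E\to E/\Span I\cong(\Span I)^\perp$ be the projection; by definition each element of $R$ equals $\pi(\gamma)$ for some $\gamma\in\Delta\setminus\Span I$, which I call a \emph{lift}. The reduction I would use is: \emph{it suffices to find a lift $\alpha$ of $a$ and a lift $\beta$ of $b$ with $\langle\alpha,\beta\rangle<0$}. Indeed, for an honest root system it is classical that $\langle\alpha,\beta\rangle<0$ forces $\alpha+\beta\in\Delta$ unless $\beta=-\alpha$; and if $a+b\neq 0$ then $\pi(\alpha+\beta)=a+b\neq 0$, so $\beta\neq-\alpha$ and $\alpha+\beta\in\Delta\setminus\Span I$, whence $a+b=\pi(\alpha+\beta)\in R$. (The case $a+b=0$ is vacuous since $0\notin R$; it is tacitly excluded here, just as $\langle\alpha,-\alpha\rangle<0$ is excluded for root systems.) The second assertion then follows by applying the first to $\alpha$ and $-\beta$.

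To produce such lifts I would pick, among the finitely many pairs $(\alpha,\beta)$ with $\alpha$ a lift of $a$ and $\beta$ a lift of $b$, one minimizing $\langle\alpha,\beta\rangle$, and show this minimum is negative. Write the orthogonal decompositions $\alpha=a+\alpha'$, $\beta=b+\beta'$ with $\alpha',\beta'\in\Span I$; since $a,b\perp\Span I$ one gets $\langle\alpha,\beta\rangle=\langle a,b\rangle+\langle\alpha',\beta'\rangle$ and $\langle\alpha,\delta\rangle=\langle\alpha',\delta\rangle$, $\langle\beta,\delta\rangle=\langle\beta',\delta\rangle$ for all $\delta\in\Span I$. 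Suppose for contradiction that $\langle\alpha,\beta\rangle\ge 0$. If $\delta\in\Delta_I$ has $\langle\beta',\delta\rangle>0$, then $\langle\beta,\delta\rangle>0$, so by the classical version of the statement for the root system $\Delta$ we have $\beta-\delta\in\Delta$, with $\beta-\delta\neq 0$ because $\beta\notin\Span I$; and $\pi(\beta-\delta)=b$ since $\delta\in\Span I$, so $\beta-\delta$ is again a lift of $b$. Minimality forces $\langle\alpha,\beta-\delta\rangle\ge\langle\alpha,\beta\rangle$, i.e.\ $\langle\alpha',\delta\rangle\le 0$. As $\Delta_I=-\Delta_I$, this proves $\langle\alpha',\delta\rangle\langle\beta',\delta\rangle\le 0$ for every $\delta\in\Delta_I$. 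The proof would be finished if this sign condition implied $\langle\alpha',\beta'\rangle\le 0$, since then $\langle\alpha,\beta\rangle=\langle a,b\rangle+\langle\alpha',\beta'\rangle\le\langle a,b\rangle<0$, contradicting $\langle\alpha,\beta\rangle\ge 0$.

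So the crux is the following elementary fact about the root system $\Delta_I$ in the Euclidean space $\Span I$: if $\langle\alpha',\delta\rangle\langle\beta',\delta\rangle\le 0$ for every $\delta\in\Delta_I$, then $\langle\alpha',\beta'\rangle\le 0$. I would prove it by applying an element of the Weyl group of $\Delta_I$ to make $\alpha'$ dominant for some positive system $\Delta_I^+$ (this preserves the hypotheses and $\langle\alpha',\beta'\rangle$), then setting $\Psi:=\Delta_I\cap(\alpha')^\perp$ --- a root subsystem whose Weyl group fixes $\alpha'$ --- and applying an element of that Weyl group to make $\beta'$ antidominant for $\Psi\cap\Delta_I^+$ (again harmless). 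Then every simple root $\theta$ of $\Delta_I^+$ satisfies $\langle\alpha',\theta\rangle\ge 0$; if $\langle\alpha',\theta\rangle>0$ then $\langle\beta',\theta\rangle\le 0$ by hypothesis, while if $\langle\alpha',\theta\rangle=0$ then $\theta\in\Psi$, so $\langle\beta',\theta\rangle\le 0$ by antidominance. Hence $\beta'$ is antidominant, $-\beta'$ is dominant, and $\langle\alpha',\beta'\rangle=-\langle\alpha',-\beta'\rangle\le 0$. The only delicate point --- and the main obstacle --- is precisely the roots of $\Delta_I$ orthogonal to $\alpha'$, about which the sign hypothesis says nothing; passing to the stabilizer subsystem $\Psi$ and making $\beta'$ antidominant there is exactly what closes that gap. (Alternatively, one may just cite Theorem~2.3 of \cite{K}, where this is proved directly; the above is the self-contained route.)
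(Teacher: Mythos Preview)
Your argument is correct. The paper itself gives no proof at all --- it simply cites Theorem~2.3 of \cite{K} --- so your self-contained route is a genuine addition rather than a paraphrase. Your key idea, choosing lifts $(\alpha,\beta)$ minimizing $\langle\alpha,\beta\rangle$ and then showing that the resulting sign condition $\langle\alpha',\delta\rangle\langle\beta',\delta\rangle\le 0$ on $\Delta_I$ forces $\langle\alpha',\beta'\rangle\le 0$, is clean and works exactly as you describe. The one step you leave slightly implicit is that when $\alpha'$ is dominant the subsystem $\Psi=\Delta_I\cap(\alpha')^\perp$ is parabolic, so its simple roots for $\Psi\cap\Delta_I^+$ are precisely the simple roots of $\Delta_I^+$ lying in $\Psi$; this is why antidominance of $\beta'$ for $\Psi$ really does give $\langle\beta',\theta\rangle\le 0$ for every such $\theta$. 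You then finish with the standard fact that two vectors in the closed dominant chamber of a root system have non-negative inner product (equivalently, a dominant vector is a non-negative combination of simple roots). One minor point: as you note, the proposition as stated is literally false when $\beta=-\alpha$; you are right that this degenerate case must be tacitly excluded, and the subsequent uses of the proposition in the paper never encounter it.
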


\begin{proof}
    This is part of Theorem~2.3 in \cite{K}.
\end{proof}

\begin{prop}
    \label{prop: lifts of sums in quotients}
    Let $I \subseteq S$ and $\bar \alpha, \bar \beta, \bar\gamma$ be non-zero elements  in $R / I $ such that
    $\bar\gamma = \bar\alpha + \bar\beta$. Then
    \begin{enumerate}
\item[(i)] If $\gamma \in \pi_I^{-1}(\bar\gamma)$, then
there are $\alpha \in \pi_I^{-1}(\bar\alpha)$ and
$\beta \in \pi_I^{-1}(\bar\beta)$ with $\gamma = \alpha + \beta$.
\item[(ii)] If $\alpha \in \pi_I^{-1}(\bar\alpha)$, then
there are
$\beta \in \pi_I^{-1}(\bar\beta)$ and
$\gamma \in \pi_I^{-1}(\bar\gamma)$ 
with $\gamma = \alpha + \beta$.
    \end{enumerate}
\end{prop}

\begin{proof} Statement (i) follows from \cite[Theorems 1.9 and 2.3]{K} and statement (ii) follows from (i) by considering the 
elements $\bar \alpha = \bar \gamma + (-\bar \beta)$.
\end{proof}

\subsection{Inversion sets}
In this subsection, we fix a QRS $R$  with base $S$. For a subset $\Phi$ of $R^+$, we denote by $\Phi^c$ its complement in $R^+$. 

\begin{definition} Let $\Phi \subseteq R^+$. We say that
\begin{enumerate}
    \item $\Phi$ is \emph{closed} if $\alpha + \beta \in \Phi$ whenever $\alpha, \beta \in \Phi$.
    \item $\Phi$ is \emph{co-closed} if $\Phi^c$ is closed, i.e., if 
    $\alpha + \beta \in \Phi^c$ whenever $\alpha, \beta \in \Phi^c$. 
    \item $\Phi$ is an \emph{inversion set} if it is both closed and co-closed. 
\end{enumerate}
\end{definition}  

\begin{remark}  It is easy to see that
    intersections of closed sets are closed and unions of co-closed sets are co-closed. 
\end{remark}

\np 
For roots systems, the notion of inversion set takes its origin from the fact that $\Phi$ is an inversion set precisely 
when there is an element $w$ in the Weyl group of the root system such that
\[\Phi = \{\alpha \in R^+ \mid w(\alpha) \in R^-\}.\]

\begin{remark} \label{rem: inversion Sets lie between two hyperplanes}
    A set $\Phi \subseteq  R^+$ is an inversion set if and only if 
    $\Phi = R^+ \cap P$
    for some positive system $P \subseteq R$. Moreover, the assignment $P \mapsto R^+ \cap P$ is a bijection
    between the positive systems in $R$ and inversion sets in $R^+$.
\end{remark}

\np 
As we will be interested in decomposing inversion sets, we need the following two definitions.

\begin{definition}
    Let $\Phi \subseteq R^+$ be a non-empty inversion set. We say that $\Phi$ is \emph{irreducible} if, whenever
    $\Phi = \Phi_1 \sqcup \Phi_2$ as a disjoint union of inversion sets, then $\Phi_1 = \Phi$ or $\Phi_2 = \Phi$.
\end{definition}

\np

\begin{definition}
    Let $\Phi$ be an inversion set in $R^+$. A \emph{decomposition} of $\Phi$ is an expression
    \[\Phi = \Phi_1 \sqcup \dots \sqcup \Phi_r\]
    where all $\Phi_i$ are pairwise disjoint inversion sets. Such a decomposition is \emph{fine} if each $\Phi_i$ 
    contains exactly one simple root. In particular, the number of inversion sets in a fine decomposition of $R^+$
    equals the rank of $R$.
\end{definition}

\begin{prop}
    Let $\Phi = \Phi_1 \sqcup \dots \sqcup \Phi_r$ be a decomposition of an inversion set $\Phi$. 
    Then for $\emptyset \ne I \subseteq \{1,\dots,r\}$, the union $\cup_{i \in I}\Phi_i$ is an inversion set.
\end{prop}

\begin{proof}
    It suffices to prove that $\Phi_1 \sqcup \Phi_2$ is closed and co-closed. It is clear that $\Phi_1 \sqcup \Phi_2$ 
    is co-closed as the union of co-closed sets.
    Further, $\Phi_1 \sqcup \Phi_2$ is closed as the complement of $\Phi_3 \sqcup \dots \sqcup \Phi_r \sqcup \Phi^c$, 
    which is co-closed.
\end{proof}

\section{Paths in QRSs} \label{sec: paths}

\np
One difficulty when working with QRSs, including root systems, is that sums of roots are not in general 
roots. In this section we study the following question: Given roots $\alpha$ and $\beta$ in a QRS $R$ 
and a set $K \subseteq R$ with 
$\beta - \alpha \in \ZSpan K$, is there a way to add roots in $K$ to get from $\alpha$ to $\beta$ without 
leaving $R$? 
Through Sections~\ref{subsec: 2 of 3 rule} and \ref{subsec: paths in R}, we 
build toward an affirmative answer to this question.
Section~\ref{subsec: 2 of 3 rule} introduces an identity for roots which will be used throughout the paper.
Section~\ref{subsec: paths in R} then introduces paths, and Proposition~\ref{proposition: existence of paths} 
guarantees their existence.
In Section~\ref{subsec: reduced paths} 
we consider how freely one may choose roots from $K$, and define an especially flexible class of paths.

\np In this section, although the focus is still on irreducible QRSs, the results we present 
apply to general QRSs. Remark~\ref{rem: paths cross components only at zero} below explains how paths 
in reducible QRSs are built from paths in irreducible components.

\subsection{Two-out-of-three rule} 
\label{subsec: 2 of 3 rule}
We start by establishing the ``two-out-of-three'' rule for sums of roots.

\begin{proposition}
\label{proposition: 2 of 3 rule}
    Let $\alpha, \beta, \gamma \in R$ be such that $\alpha + \beta + \gamma \in R$ 
    but $\beta + \gamma \notin R$. If $\alpha + \gamma \neq 0$, then $\alpha + \beta \in R$;
    if $\alpha + \beta \neq 0$, then $\alpha + \gamma \in R$.
    In particular, if $\alpha, \beta, \gamma \in R$, $\alpha + \beta + \gamma \in R$,
    and none of $\alpha + \beta, \alpha + \gamma, \beta + \gamma$ equals zero, then at least
    two of $\alpha + \beta, \alpha + \gamma, \beta + \gamma$ belong to $R$.
\end{proposition}

\begin{proof}
    We repeatedly use Proposition~\ref{prop: if <a, b> < 0 then a + b is a root}.
    We assume that $\alpha + \gamma \neq 0$ and show that $\alpha + \beta \in R$. It suffices to show that 
    either $\innerProd \alpha \beta < 0$ or $\innerProd{\alpha + \beta + \gamma}{\gamma} > 0$. For that reason, we will 
    assume $\innerProd \alpha \beta \geq 0$ and show that $\innerProd{\alpha + \beta + \gamma}{\gamma} > 0$. Note that 
    because $\beta + \gamma \notin R$, both $\innerProd \beta \gamma \geq 0$ 
    and $\innerProd{\alpha + \beta + \gamma}{\alpha} \leq 0$. 
    We extract from these inequalities that $\innerProd{\alpha + \gamma}{\alpha} \leq -\innerProd{\beta}{\alpha} \leq 0$. 
    Because $\innerProd{\alpha + \gamma}{\alpha + \gamma} > 0$ and $\innerProd{\alpha + \gamma}{\alpha} \leq 0$, 
    we now have that $\innerProd{\alpha + \gamma}{\gamma} > 0$. Finally, we note that 
    $\innerProd{\alpha + \beta + \gamma}{\gamma} = \innerProd{\alpha + \gamma}{\gamma} + \innerProd{\beta}{\gamma} > 0$, 
    and so we conclude that $\alpha + \beta \in R$. By the symmetry of the statement, $\alpha + \gamma$ 
    is a root provided $\alpha + \beta \neq 0$, proving the first part of the statement. The second part follows
    immediately from the first one.
\end{proof}

\begin{remark}
For root systems we can prove the result above by passing to the corresponding complex Lie algebra 
and using properties of its root decomposition and Jacobi's identity. Such an argument carries over
to the case of QRSs by using the eigenspace decomposition of a Lie algebra with respect to a
toral subalgebra which is not maximal, see \cite{K}.
\end{remark}

\subsection{Paths in \texorpdfstring{$R$}{\it R}}
\label{subsec: paths in R}

\begin{definition}
\label{def: paths}
    A {\em path} from a root $\alpha$ to a root $\beta$ consists of the pair $(\alpha,\beta)$ 
    and a possibly empty sequence of roots $\kappa_1, \kappa_2, \dots, \kappa_n$ 
    such that $\alpha + \kappa_1 + \dots + \kappa_n = \beta$
and for all $i$, we have $\alpha + \kappa_1 + \dots + \kappa_i \in R$. 
    We denote this path $[\alpha; \kappa_1, \dots, \kappa_n; \beta]$ and call each $\kappa_i$ a {\em step}. 
    The roots $\alpha + \kappa_1 + \dots + \kappa_i$ are called the \emph{partial sums} of the path.
    We say that $[\alpha; \kappa_1, \dots, \kappa_n; \beta]$ passes through $\gamma$ if $\gamma$ is such
    a partial sum.

\np 
    The path $[\alpha; -\alpha, \beta; \beta]$ is called the \emph{trivial path from $\alpha$ to $\beta$}.
    
\end{definition}

\begin{remark}\label{rem: paths cross components only at zero}
    If $\alpha$ and $\beta$ are nonzero roots in different components of $R$, then any path from $\alpha$
to $\beta$ necessarily passes through zero. Consequently, any path in $R$ that does not pass through zero is 
contained in a single component of $R$.
\end{remark}

\begin{proposition}
   \label{proposition: existence of paths}
    Let $\beta = \alpha + \kappa_1 + \dots + \kappa_n$, where $\alpha, \beta, \kappa_i \in R$ for $1 \leq i \leq n$. 
    There is a path 
    $[\alpha; \kappa_{i_1}, \kappa_{i_2}, \dots, \kappa_{i_m}; \beta]$ 
    from $\alpha$ to $\beta$ for some (possibly empty) subset 
    $\{i_1, i_2, \dots, i_m\}$ of $\{1, 2, \dots, n\}$.
\end{proposition}

\begin{proof} Set $\mu:= \kappa_1 + \dots + \kappa_n$. If $\mu = 0$, then there is nothing to prove. Thus we assume that $\mu \neq 0$.
First we prove that
there is $1 \leq i \leq n$ such that either $\alpha + \kappa_i \in R$ or $\alpha + \mu - \kappa_i \in R$.
For this we consider two cases for $\mu$:

\begin{enumerate}
\item[(i)] $\mu \not \in R$.  In this case $\alpha + \kappa_i \in R$ for some $1 \leq i \leq n$.
Indeed, since $\beta \in R$  but $\mu = \beta - \alpha \not \in R$, we conclude that $\alpha \neq 0$ and
    \[0 \geq \innerProd\alpha\beta = 
    \innerProd\alpha\alpha + \innerProd\alpha{\kappa_1} + \dots + \innerProd\alpha{\kappa_n} \ . \] 
    Since $\innerProd{\alpha}{\alpha} > 0$, there is $1 \leq i \leq n$ 
    such that $\innerProd{\alpha}{\kappa_i} < 0$ and thus  $\alpha + \kappa_i \in R$.
    \item[]
\item[(ii)] $\mu \in R$. 
    Since $\innerProd{\mu}{\kappa_1} + \innerProd{\mu}{\kappa_2} + \dots + \innerProd{\mu}{\kappa_n} = 
    \innerProd{\mu}{\mu} > 0$, 
    there is $1 \leq i \leq n$ such that $\innerProd{\mu}{\kappa_i} > 0$ and hence $\mu - \kappa_i \in R$. 
    Applying 
    Proposition~\ref{proposition: 2 of 3 rule} to the expression $\beta = (\mu - \kappa_i) + \alpha + \kappa_i$, we conclude 
    that if $\alpha + \kappa_i \notin R$, then $\mu - \kappa_i + \alpha \in R$, since
    $\mu = (\mu - \kappa_i) + \kappa_i \neq 0$.
\end{enumerate}

\np 
In proving the proposition, 
we may and will assume that no non-empty subcollection of $\kappa_1, \dots, \kappa_n$ sums to 0.
With this assumption in mind, the statement of the proposition follows by induction on $n$ from
the observation above.
\end{proof}

\subsection{Reduced paths}
\label{subsec: reduced paths}

Given a path between two roots, there are often multiple 
ways to rearrange its steps without breaking the condition that partial sums be roots. Having the freedom to permute 
the steps in a path may allow us to ensure that all partial sums satisfy additional properties, e.g. 
that they are positive or that they stay within an inversion set. 
It turns out that, for a certain class of paths,  every 
permutation of the steps of such a path defines another path, see 
Proposition~\ref{prop: any rearrangement of a reduced path is a reduced path} below.

\begin{definition}
    A path $[\alpha; \kappa_1, \dots, \kappa_n; \beta]$ is {\em reduced} if 
    no sum of two or more steps $\kappa_i$ is a root. 
\end{definition}

\begin{remark}
    \label{remark: No sum of steps is a root in a reduced path}
 {\bf (i)}   Proposition~\ref{proposition: existence of paths} implies that the path 
    $[\alpha; \kappa_1, \dots, \kappa_n; \beta]$ is reduced if and only if $\kappa_i + \kappa_j \not \in R$
    for $1 \leq i \neq j \leq n$. Indeed, assume that 
    $\kappa_i + \kappa_j \not \in R$ for $1 \leq i \neq j \leq n$ but 
    $\kappa_{i_1} + \dots + \kappa_{i_s} \in R$ for some $i_1, \dots, i_s$. Assume
    furthermore, that $s \geq 3$ is the smallest integer for which such $\kappa_{i_1}, \dots, \kappa_{i_s}$
    exist. Then $\kappa_{i_2} + \dots + \kappa_{i_s} \neq 0$ by the minimality of $s$ 
    and, by Proposition~\ref{proposition: existence of paths}, there is a path between 
    $\kappa_{i_1}$ and $\kappa_{i_1} + \dots + \kappa_{i_s}$. In particular, $\kappa_{i_1} + \kappa_{i_l} \in R$
    for some $2 \leq l \leq s$, contrary to the assumption that $\kappa_i + \kappa_j \not \in R$
    for $1 \leq i \neq j \leq n$.

\np
{\bf (ii)}
It is clear that, given a path $[\alpha; \kappa_1, \dots, \kappa_n; \beta]$, 
there exists a reduced path $[\alpha; \mu_1, \dots, \mu_m; \beta]$, where each 
$\mu_i$ is the sum of a subcollection of the steps $\kappa_1, \kappa_2, \dots, \kappa_n$ of the original path
(and the subcollections corresponding to different steps among $\mu_1, \dots, \mu_m$ are disjoint).
Of course, there may be several reduced paths $[\alpha; \mu_1, \dots, \mu_m; \beta]$ as above.

\np
{\bf (iii)} If $\alpha, \beta$ are nonzero roots in different components of $R$, then the only
reduced path from $\alpha$ to $\beta$ is the trivial one, see Remark~\ref{rem: paths cross components only at zero}.
\end{remark}

\begin{prop}
\label{prop: any rearrangement of a reduced path is a reduced path}
    Let $[\alpha; \kappa_1, \dots, \kappa_n; \beta]$ be a non-trivial reduced path.
    Then for any permutation $i_1, \dots, i_n$ of $1,
    \dots, n$, $[\alpha; \kappa_{i_1}, \dots, \kappa_{i_n}; \beta]$ is a reduced path.
\end{prop}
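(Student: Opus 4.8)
The plan is to reduce the statement to a single transposition of adjacent steps and then apply the ``$2$ of $3$'' rule, Lemma~\ref{lemma: 2 of 3 rule}. First note that being reduced is a property of the \emph{unordered} collection of steps --- namely that $\kappa_i+\kappa_j\notin R$ whenever $i\neq j$ --- so it is automatically inherited by $[\alpha;\mu_1,\hdots,\mu_n;\beta]$ once that sequence is known to be a path. Thus everything reduces to showing that every reordering of the steps keeps all the partial sums in $R$. Since the adjacent transpositions generate the symmetric group, it suffices to prove: if $[\alpha;\lambda_1,\hdots,\lambda_n;\beta]$ is a reduced path with $\alpha\neq-\lambda_i$ for all $i$, then swapping $\lambda_k$ and $\lambda_{k+1}$ produces a sequence which is again a path with these same properties. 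Since neither $\alpha$ nor the set of steps changes along the way, the conditions $\alpha\neq-\lambda_i$ and reducedness persist throughout, and only the ``is a path'' part needs a new argument at each transposition.

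Before the transposition step I would establish the following non-vanishing claim: for every subset $T\subseteq\{1,\hdots,n\}$ the vector $\alpha+\sum_{i\in T}\kappa_i$ is non-zero. For $|T|\leq 1$ this is immediate ($\alpha$ is a root, and $\alpha+\kappa_i\neq 0$ by hypothesis); for $|T|\geq 2$, the sum $\sum_{i\in T}\kappa_i$ is a non-trivial sum of steps of a reduced path, hence not a root by Proposition~\ref{prop: No sum of steps is a root in a reduced path}, so in particular it cannot equal the root $-\alpha$. This is precisely where the hypothesis $\alpha\neq-\kappa_i$ is used, and it is exactly what guarantees that no zero partial sum can appear after rearranging.

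For the transposition at positions $k,k+1$ of a path $[\alpha;\lambda_1,\hdots,\lambda_n;\beta]$, set $\delta:=\alpha+\lambda_1+\cdots+\lambda_{k-1}$ and write $\mu=\lambda_k$, $\nu=\lambda_{k+1}$. From the path we have $\delta,\ \delta+\mu,\ \delta+\mu+\nu\in R$, and reducedness gives $\mu+\nu\notin R$; the only partial sum of the swapped sequence that differs from one of the old is $\delta+\nu$. Applying Lemma~\ref{lemma: 2 of 3 rule} to the three roots $\delta,\nu,\mu$ (in the roles of $\alpha,\beta,\gamma$), using $\delta+\nu+\mu\in R$, $\nu+\mu\notin R$ and $\delta+\mu\neq 0$ (it is a root), and noting $\delta+\nu\neq 0$ by the non-vanishing claim, yields $\delta+\nu\in R$. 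Hence the swapped sequence is a path, still reduced and still with $\alpha\neq-\lambda_i$; iterating over a sequence of adjacent transpositions realizing an arbitrary permutation completes the proof.

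The part that requires care is the bookkeeping of non-vanishing conditions rather than any deep inequality: Lemma~\ref{lemma: 2 of 3 rule} (and Proposition~\ref{prop: if <a, b> < 0 then a + b is a root} underneath it) only deliver a genuine element of $R$ when the relevant sum is non-zero, and without the hypothesis $\alpha\neq-\kappa_i$ the statement genuinely fails --- e.g.\ in type $\AA_2$ with simple roots $\alpha_1,\alpha_2$ the sequence $[-\alpha_2;\ \alpha_1+\alpha_2,\ \alpha_2;\ \alpha_1+\alpha_2]$ is a reduced path whose two steps cannot be swapped, since $-\alpha_2+\alpha_2=0\notin R$. Front-loading the non-vanishing claim is what keeps the transposition step clean.
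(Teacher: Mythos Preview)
Your argument is correct and is essentially the paper's own proof: reduce to adjacent transpositions, then apply Lemma~\ref{lemma: 2 of 3 rule} to the triple $(\delta,\lambda_{k+1},\lambda_k)$, with the non-vanishing of the relevant partial sum supplied by the hypothesis $\alpha\neq-\kappa_i$ (for $|T|=1$) together with Proposition~\ref{prop: No sum of steps is a root in a reduced path} (for $|T|\geq 2$). Your front-loaded non-vanishing claim and the $\AA_2$ counterexample are nice additions, but the core argument matches the paper's exactly; note that the condition $\delta+\nu\neq 0$ you cite is actually superfluous, since the lemma only requires $\delta+\mu\neq 0$ to conclude $\delta+\nu\in R$.
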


\begin{proof} If $\kappa_i = - \alpha$ for some $i$, then $\beta = \sum_{j \neq i} \kappa_i$ and the assumption that
$[\alpha; \kappa_1, \dots, \kappa_n; \beta]$ is reduced implies that $n \leq 2$. The paths 
$[\alpha; \kappa_1, \dots, \kappa_n; \beta]$ with $n \leq 2$ and $\kappa_i = -\alpha$ for some $i$ are  
$[\alpha; - \alpha; 0]$, 
$[\alpha; \beta, -\alpha; \beta]$ (if $\alpha + \beta \in R$), and
$[\alpha; -\alpha, \beta; \beta]$. The statement is true for the first two paths and the last path is the trivial path.

\np
Now assume that $n \geq 1$ and $\alpha \neq -\kappa_i$ 
    for all $1 \leq i \leq n$. 
    We prove that any transposition of adjacent steps in $[\alpha; \kappa_1, \dots, \kappa_n; \beta]$ 
    results in another reduced path, from which the desired result follows. To that end, we take some $1 \leq i < n$ and 
    consider swapping the positions of $\kappa_i$ and $\kappa_{i + 1}$. The resulting sequence of partial sums is the 
    same as that of the original path, except $\alpha + \kappa_1 + \dots + \kappa_{i - 1} + \kappa_i$ is replaced 
    by $\alpha + \kappa_1 + \dots + \kappa_{i - 1} + \kappa_{i + 1}$. We therefore need only to show that this is a root. 
    To that end, we apply Proposition~\ref{proposition: 2 of 3 rule} to the expression 
    $(\alpha + \kappa_1 + \dots + \kappa_{i - 1}) + \kappa_i + \kappa_{i + 1}$ to conclude 
    that $\alpha + \kappa_1 + \dots + \kappa_{i - 1} + \kappa_{i + 1} \in R$ provided 
    $\alpha + \kappa_1 + \dots + \kappa_{i - 1} + \kappa_i \neq 0$. If $i = 1$, this is guaranteed by the condition 
    that $\alpha \neq -\kappa_j$ for any $1 \leq j \leq n$. If $i > 1$, then we would have a non-trivial sum of steps in a 
    reduced path equal to $-\alpha$, which is impossible. 
    We therefore conclude that 
    $[\alpha; \kappa_1, \dots, \kappa_{i-1},\kappa_{i + 1}, \kappa_i,\kappa_{i+2}, \dots, \kappa_n; \beta]$ 
    is another reduced path.
\end{proof}

\begin{example}
    In the root system $\mathbb D_5$ we have the following equation
    $$ \DFive01000 - \DFive00001 + \DFive00101 + \DFive00111 + \DFive10000 = \DFive11211 \ .$$
    (Here roots have been identified with their coefficients as in 
    Remark~\ref{Remark:coefficient vector}.)  The equation above yields the path
    $$ \left[\DFive01000; \DFive10000, \DFive00111, -\DFive00001, \DFive00101; \DFive11211 \right] \ , $$
which is not reduced but gives rise to the reduced path
    $$ \left[\DFive01000; \DFive10000, \DFive00110, \DFive00101; \DFive11211 \right] \ .$$
    To verify Proposition~\ref{prop: any rearrangement of a reduced path is a reduced path}, we 
    check that each permutation of the steps in the reduced path
    above properly defines a path.
    $$\begin{array}{ccc}
        \left[\DFive01000; \DFive10000, \DFive00110, \DFive00101; \DFive11211 \right]& 
        \left[\DFive01000; \DFive10000, \DFive00101, \DFive00110; \DFive11211 \right]       & 
        \left[\DFive01000; \DFive00110, \DFive10000, \DFive00101; \DFive11211 \right]   \\[8pt]
        \left[\DFive01000; \DFive00110, \DFive00101, \DFive10000; \DFive11211 \right]       &
        \left[\DFive01000; \DFive00101, \DFive10000, \DFive00110; \DFive11211 \right]       & 
        \left[\DFive01000; \DFive00101, \DFive00110, \DFive10000; \DFive11211 \right]
    \end{array}$$
\end{example}

\np  It is not too difficult to see that every partial sum along each of these paths is a root in $\mathbb D_5$.

\subsection{An application}

The following proposition will be useful in Section~\ref{sec: components as inflations}. 

\begin{prop}
\label{prop: co-closed sets span their support}
    Let $\Phi$ be a co-closed subset (for instance, an inversion set) of $R^+$.  Then  
    \[\ZSpan \Phi=\ZSpan \supp \Phi \ . \]
\end{prop}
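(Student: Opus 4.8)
The inclusion $\ZSpan \Phi \subseteq \ZSpan \supp \Phi$ is immediate, since each root in $\Phi$ is an integral combination of its support, hence of $\supp\Phi$. The content of the statement is the reverse inclusion: every simple root $\theta \in \supp\Phi$ lies in $\ZSpan\Phi$. My plan is to fix $\theta \in \supp\Phi$, pick a root $\alpha \in \Phi$ whose support contains $\theta$, and argue by induction on $\htt(\alpha)$ (the sum of the coefficients of $\alpha$ in the base $\Sigma$) that $\theta \in \ZSpan\Phi$. If $\alpha = \theta$ is simple, we are done. Otherwise $\alpha$ has height $\geq 2$, and the key claim is that $\alpha$ can be written as a sum $\alpha = \beta + \gamma$ of two positive roots with $\theta \in \supp\beta \cup \supp\gamma$; since $\Phi$ is co-closed (equivalently, $\Phi^c$ is closed), not both $\beta$ and $\gamma$ can lie in $\Phi^c$, so at least one of them, say $\beta$, lies in $\Phi$. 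Then $\beta \in \Phi$ has strictly smaller height than $\alpha$, and $\gamma = \alpha - \beta \in \ZSpan\Phi$ by... wait — $\gamma$ need not be in $\Phi$. So I adjust: I induct to get $\supp\beta \subseteq \ZSpan\Phi$; if $\theta \in \supp\beta$ we are done, and if $\theta \in \supp\gamma$ I also need $\gamma$ handled. The cleaner route is to prove the stronger statement $\supp\alpha \subseteq \ZSpan\Phi$ for every $\alpha\in\Phi$, by induction on $\htt(\alpha)$: writing $\alpha = \beta+\gamma$ with $\beta\in\Phi$ of smaller height, we get $\supp\beta \subseteq \ZSpan\Phi$ by induction, and then $\gamma = \alpha - \beta \in \ZSpan\Phi$; but $\gamma$ may not be positive or even a root. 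Instead, I will use a \emph{path} from $0$... more precisely, the following: express $\alpha$ via a reduced path and peel off steps one at a time.

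Concretely, here is the argument I would write. Fix $\alpha \in \Phi$ with $\htt(\alpha) \geq 2$. Write $\alpha = \theta_{i_1} + \theta_{i_2} + \cdots + \theta_{i_h}$ as an ordered sum of simple roots all of whose partial sums are roots (such an ordering exists by the standard fact for the underlying root system $\Delta$, carried to $R$ via the projection, or by Proposition~\ref{prop: existence of paths} applied with $\alpha' = 0$ — though $0\notin R$, so one starts from the first simple root). This realizes $\alpha$ as the endpoint of a path $[\theta_{i_1}; \theta_{i_2}, \ldots, \theta_{i_h}; \alpha]$ with all partial sums positive roots. Now consider the first partial sum $\alpha' := \theta_{i_1} + \cdots + \theta_{i_{h-1}}$, a positive root of height $h-1$, and $\alpha = \alpha' + \theta_{i_h}$. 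Since $\Phi$ is co-closed and $\alpha \in \Phi$, it is impossible that both $\alpha'$ and $\theta_{i_h}$ lie in $\Phi^c$; hence $\alpha' \in \Phi$ or $\theta_{i_h} \in \Phi$. In the second case $\theta_{i_h} \in \ZSpan\Phi$ trivially, and $\alpha' = \alpha - \theta_{i_h} \in \ZSpan\Phi$; in the first case, induction on height gives $\supp\alpha' \subseteq \ZSpan\Phi$, hence $\theta_{i_h} = \alpha - \alpha' \in \ZSpan\Phi$ and $\supp\alpha = \supp\alpha' \cup \{\theta_{i_h}\} \subseteq \ZSpan\Phi$. Either way $\supp\alpha \subseteq \ZSpan\Phi$. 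Taking the union over all $\alpha \in \Phi$ gives $\ZSpan\supp\Phi \subseteq \ZSpan\Phi$, and combined with the trivial inclusion this proves the proposition.

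I expect the main obstacle to be the very first step: justifying that $\alpha$ can be written as an \emph{ordered} sum of simple roots with all partial sums roots of $R$, and keeping those partial sums \emph{positive} (so that "height" is a legitimate strictly-decreasing induction parameter and the decomposition $\alpha = \alpha' + \theta_{i_h}$ is a sum of two \emph{positive} roots, which is what co-closedness of $\Phi$ sees). For the underlying root system $\Delta$ this is classical, and applying $\pi_I$ sends the chain of positive roots of $\Delta$ to a chain of positive roots of $R$ together with their partial sums — one must only check that none of the intermediate projections is $0$, which can be arranged by choosing the chain in $\Delta$ appropriately, or by invoking Proposition~\ref{prop: existence of paths} / Proposition~\ref{prop: any path can be reduced} directly inside $R$. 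A slightly different but equally viable route avoiding heights altogether: prove directly by well-founded induction on $\alpha \in \Phi$ under the partial order that any decomposition $\alpha = \beta + \gamma$ into two positive roots forces $\beta \in \Phi$ or $\gamma \in \Phi$, and then recurse — but one still needs \emph{some} decomposition of $\alpha$ into two smaller positive roots to exist, which is exactly the classical fact that a non-simple positive root is a sum of two positive roots, transported to $R$. Once that combinatorial input is in hand, the co-closedness bookkeeping is routine.
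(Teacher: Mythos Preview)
Your induction has a genuine gap in what you call the second case, where $\theta_{i_h}\in\Phi$ but $\alpha'\notin\Phi$. There you correctly note $\theta_{i_h}\in\ZSpan\Phi$ and $\alpha'=\alpha-\theta_{i_h}\in\ZSpan\Phi$, and then assert ``Either way $\supp\alpha\subseteq\ZSpan\Phi$.'' This does not follow: knowing that the \emph{root} $\alpha'$ lies in $\ZSpan\Phi$ does not give you that each simple root in $\supp\alpha'$ lies there, and since $\alpha'\notin\Phi$ your induction hypothesis (which is stated only for roots of $\Phi$) cannot be invoked for it. Concretely, in $\mathbb{A}_3$ take $\Phi=\{\theta_1,\theta_3,\theta_1+\theta_2+\theta_3\}$; one checks $\Phi^c=\{\theta_2,\theta_1+\theta_2,\theta_2+\theta_3\}$ is closed, so $\Phi$ is co-closed. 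For $\alpha=\theta_1+\theta_2+\theta_3$ the only simple roots $\theta$ with $\alpha-\theta\in R^+$ are $\theta_1$ and $\theta_3$, and in both cases $\alpha'$ (namely $\theta_2+\theta_3$ or $\theta_1+\theta_2$) lies in $\Phi^c$. Case~2 is therefore forced regardless of which chain you pick, and from your argument you only extract, say, $\theta_1+\theta_2\in\ZSpan\Phi$, not $\theta_2\in\ZSpan\Phi$.

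The missing idea is exactly what the paper supplies. Rather than peeling a simple root off the top of $\alpha$, the paper starts at a simple root $\theta\in\supp\Phi\setminus\Phi$, builds a path \emph{up} to some $\alpha\in\Phi$ with positive simple steps, reduces it (Proposition~\ref{prop: any path can be reduced}), and then---crucially---invokes Proposition~\ref{prop: any rearrangement of a reduced path is a reduced path} to reorder the steps so that all steps in $\Phi$ come first and all steps in $\Phi^c$ come last. Co-closure then forces the intermediate partial sum $\theta+\kappa_1+\cdots+\kappa_m$ (with $\kappa_1,\ldots,\kappa_m\in\Phi$) to lie in $\Phi$, exhibiting $\theta$ as a $\ZZ$-combination of elements of $\Phi$. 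In other words, the reordering property of reduced paths is precisely the device that lets one sidestep the ``$\alpha'\notin\Phi$'' obstruction that defeats your height induction; your stated worry about the existence of the simple-root chain is not the real obstacle.
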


\begin{proof} It follows from the definition of $\supp \Phi$ that $\ZSpan \Phi \subseteq \ZSpan \supp \Phi$. 
To demonstrate the reverse inclusion, we take $\theta \in \supp \Phi$ and show that it is in $\ZSpan \Phi$. 
This is obvious if $\theta \in \Phi$, so we consider the case where $\theta \notin \Phi$. Since $\theta \in \supp \Phi$, 
there is some $\alpha \in \Phi$ such that $\theta < \alpha$. We may therefore, appealing to 
Proposition~\ref{proposition: existence of paths}, form a path from $\theta$ to $\alpha$ with positive simple roots as steps. 
By Remark~\ref{remark: No sum of steps is a root in a reduced path}(ii), we may replace 
this path by the reduced path $[\theta; \kappa_1, \dots, \kappa_n; \alpha]$ 
with positive steps. Using Proposition~\ref{prop: any rearrangement of a reduced path is a reduced path}, we choose an  
ordering of the $\kappa_i$'s with those steps in $\Phi$ preceding those in $\Phi^c$:
$$ [\theta; \underbrace{\kappa_1, \dots, \kappa_m}_{\kappa_i \in \Phi}, \underbrace{\kappa_{m + 1}, \dots, 
\kappa_n}_{\kappa_i \in \Phi^c}; \alpha]\ .$$
Co-closure implies that the partial sums $\alpha-\kappa_n$, $\alpha-\kappa_n-\kappa_{n-1}$
down to $\theta + \kappa_1 + \dots + \kappa_m$ are all in $\Phi$, 
from which it follows that $\theta \in \ZSpan \Phi$.
\end{proof}

 \section{Inflation on subsets of positive roots} \label{sec: inflations}

\np In this section, we introduce the notion of inflation in a QRS. The notion of 
inflation was first studied in the context of the symmetric group (see for example \cite{AAK}), 
by giving a natural way to decompose every inversion set in a unique way, and in particular 
giving rise to some elements that cannot be decomposed non-trivially (that we will call primitive, 
although other authors have used the term simple). On the one hand, the notion of inflation that we 
present generalizes the corresponding notion for the symmetric group. On the other hand, 
inflation will allow us to better understand inversion sets and their decompositions by providing some inductive methods 
to study QRSs.

\np 
Throughout this section $\Phi$ denotes a subset of $R^+$ and,
for a given $I \subseteq S$, $\Psi$ denotes a subset of $(R/I)^+$.  

\subsection{Definition and basic properties}

\begin{definition}
\label{def: inflation}
    Let $I \subseteq S$ and $\Phi \subseteq R^+$. We say that $\Phi$ is an \emph{inflation} from $I$ 
    of $\Psi \subseteq (R / I)^+$ by $X \subseteq R_I^+$ if 
    $\Phi$ consists of all preimages under the canonical projection $\pi_I: V \to V / \Span I$ 
    of the elements of $\Psi$ along with the elements of $X$.
    Formally,
    \[\Phi = (\pi_I^{-1}(\Psi)\cap R) \cup X \ .\]
 We write $\Phi = \inf_I^S(\Psi, X)$.  
\end{definition}

\np 
\begin{remark} \label{Remark:inflations for reducible}
Note that Definition~\ref{def: inflation} is meaningful regardless of whether $R$ is assumed to be reducible or 
irreducible. If $R$ is reducible, write $R = R^1 \times \cdots \times R^s$ where each $R^i$ is irreducible with base $S^i$.
    We can write $I = I^1 \cup \cdots \cup I^s$ with $I^i \subseteq S^i$. Observe that $R/I$ can be identified with 
    $R^1/I^1 \times \cdots \times R^s/I^s$. For any $\Phi \subseteq R^+$, we let $\Phi^i = \Phi \cap R^i$ and for 
    $\Psi \subseteq (R/I)^+$, we let $\Psi^i = \Psi \cap R^i/I^i$. Note that $\Phi^i \subseteq (R^i)^+$ and 
    $\Psi^i \subseteq (R^i/I^i)^+$.
    
   \np 
   Then for $\Phi \subseteq R^+$, we have that $\Phi = \inf_I^S(\Psi, X)$ if and only if 
   $\Phi^i = \inf_{I^i}^{S^i}(\Psi^i, X^i)$, where $X^i = X \cap R_{I^i}$. 
   This justifies restricting our attention to irreducible QRSs.
\end{remark}

\begin{remark} We will often use the following simple properties of inflation without explicit reference.
    \begin{enumerate}[(i)]
        \item $\inf_I^S(\Psi, X)$ consists of all positive roots in $R$ which project to 
        roots in $\Psi$ along with the elements of $X$ (which are some of the positive roots that project to 0).
        \item Every $\Phi\subseteq R^+$ is an inflation $\Phi = \inf_S^S(\emptyset, \Phi)$ as 
        well as $\Phi = \inf_\emptyset^S(\Phi, \emptyset)$.
        \item $\inf_I^S(\Psi_1, X_1) = \inf_I^S(\Psi_2, X_2)$ if and only if  $\Psi_1 = \Psi_2$ and $X_1 = X_2$.
        \item $\Phi = \inf_I^S(\Psi, X)$ if and only if $\Phi^c = \inf_I^S(\Psi^c, X^c)$.
        \item $\inf_I^S(\Psi_1, X_1) \cup \inf_I^S(\Psi_2, X_2) = \inf_I^S(\Psi_1 \cup \Psi_2, X_1 \cup X_2)$
    \end{enumerate}
\end{remark}

\np 
The following two propositions are straightforward and we omit the proofs here.

\np 
\begin{prop} Let $I \subseteq S$. The following are equivalent:
\label{prop: conditions equivalent to being an inflation}
    \begin{enumerate}[(i)]
        \item $\Phi$ is inflated from $I$.

        \item Suppose $\alpha, \beta \in R^+$ satisfy $\pi_I(\alpha) = \pi_I(\beta) \neq 0$. 
        Then $\alpha \in \Phi$ if and only if $\beta \in \Phi$. 

        \item Suppose $\alpha \in R^+$ and $\theta \in \pm I$ satisfy $\pi_I(\alpha) \neq 0$ 
        and $\alpha + \theta \in R^+$. Then $\alpha + \theta \in \Phi$ if and only if $\alpha \in \Phi$.

        \item For any $\bar \alpha \in (R / I)^+$, $\pi^{-1}(\bar \alpha)\cap R^+ \subseteq \Phi$ 
        or $\pi^{-1}(\bar \alpha)\cap R^+ \subseteq \Phi^c$.

        \item $\pi_I(\Phi) \cap \pi_I(\Phi^c) \subseteq \{ 0 \}$. \qed
    \end{enumerate}
\end{prop}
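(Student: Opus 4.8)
The plan is to prove the cyclic chain of implications
$(i) \Rightarrow (ii) \Rightarrow (iii) \Rightarrow (i)$, together with the short equivalences
$(ii) \Leftrightarrow (iv) \Leftrightarrow (v)$, which are essentially reformulations. First I would unwind the definitions: the fibres of $\pi_I$ restricted to $R^+$ are exactly the sets $\pi_I^{-1}(\overline\alpha) \cap R^+$ for $\overline\alpha \in R/I$ together with the set $\pi_I^{-1}(0)\cap R^+ = R_I^+$, and a subset $\Phi$ is inflated from $I$ precisely when $\Phi$ restricted to the non-zero fibres is a union of whole fibres (the part inside $R_I^+$ being unconstrained, equal to the $X$ in Definition~\ref{def: inflation}). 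With this picture, $(i) \Leftrightarrow (iv)$ is immediate (write $\Psi = \{\overline\alpha \ne 0 : \pi_I^{-1}(\overline\alpha) \subseteq \Phi\}$ and $X = \Phi \cap R_I^+$), and $(iv) \Leftrightarrow (ii)$ is just the statement that a fibre lies entirely in $\Phi$ or entirely in $\Phi^c$ iff no two of its elements are separated by $\Phi$. Finally $(ii) \Leftrightarrow (v)$: if some $\overline\alpha \ne 0$ is hit by both an element of $\Phi$ and an element of $\Phi^c$ then $\overline\alpha \in \pi_I(\Phi)\cap\pi_I(\Phi^c)\setminus\{0\}$, and conversely.

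The only implication with genuine content is $(iii) \Rightarrow (ii)$ (equivalently $(iii) \Rightarrow (iv)$); the reverse $(ii) \Rightarrow (iii)$ is trivial since $\pi_I(\alpha+\theta) = \pi_I(\alpha)$ when $\theta \in \pm I$. For $(iii) \Rightarrow (ii)$, suppose $\alpha, \beta \in R^+$ with $\pi_I(\alpha) = \pi_I(\beta) \ne 0$. Then $\beta - \alpha \in \Span I$, so $\beta - \alpha$ is an integral combination of roots in $\pm I$; I would like to move from $\alpha$ to $\beta$ by a sequence of steps, each in $\pm I$, staying inside $R$ the whole way, and then apply $(iii)$ at each step to conclude $\alpha \sim_\Phi \beta$. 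This is exactly what Proposition~\ref{prop: existence of paths} provides, provided one arranges that no subcollection of the chosen $\pm I$-steps sums to zero. So the argument is: write $\beta - \alpha = \kappa_1 + \cdots + \kappa_n$ with each $\kappa_i \in \pm I \subseteq R$, cancel any subcollection summing to zero (this only shortens the list and preserves the total), apply Proposition~\ref{prop: existence of paths} to get a path $[\alpha; \mu_1, \dots, \mu_n; \beta]$ with all $\mu_i \in \pm I$, and then observe that every partial sum $\alpha_j := \alpha + \mu_1 + \cdots + \mu_j$ satisfies $\pi_I(\alpha_j) = \pi_I(\alpha) \ne 0$, hence $\alpha_j \in R^+$ necessarily (a root projecting to a non-zero element of $(R/I)^+$ is positive — this uses $(R/I)^+ = \pi_I(R^+)\setminus\{0\}$ and the disjointness $R/I = (R/I)^+ \sqcup (R/I)^-$). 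Then $(iii)$ applied to $\alpha_{j-1} + \mu_j = \alpha_j$ gives $\alpha_{j-1} \sim_\Phi \alpha_j$ for every $j$, and transitivity of $\sim_\Phi$ yields $\alpha \sim_\Phi \beta$.

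One small point I would need to be careful about: Proposition~\ref{prop: existence of paths} requires that $\beta - \alpha$ actually be expressible as a sum of roots with no zero-summing subcollection, and after the cancellation step the remaining list might still be empty (if $\alpha = \beta$, in which case there is nothing to prove) — so I would handle $\alpha = \beta$ separately and assume $n \ge 1$ thereafter. Also, to invoke $(iii)$ I need $\mu_j \in \pm I$ and $\pi_I(\alpha_{j-1}) \ne 0$; the former holds by construction and the latter because $\pi_I(\alpha_{j-1}) = \pi_I(\alpha) \ne 0$. The hard part is really just recognizing that the path machinery of Section~2 is precisely the tool that converts the "local" condition $(iii)$ into the "global" fibre condition; once that is in place, everything else is bookkeeping with the definitions of inflation and of $\sim_\Phi$. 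I would close by noting that the equivalence $(i) \Leftrightarrow \cdots \Leftrightarrow (v)$ is then complete, and remark that $(v)$ is the most convenient criterion to check in practice.
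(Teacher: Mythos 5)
Your proof is correct. The paper gives no argument here --- it simply remarks that this proposition (together with Proposition~\ref{prop: inflation of an inflation is an inflation 1}) is ``easily verified'' --- so there is no authorial proof to compare against. You have nonetheless identified the only implication with real content, namely $(iii)\Rightarrow(ii)$, and your use of Proposition~\ref{prop: existence of paths} to connect two roots in the same nonzero fibre of $\pi_I$ by $\pm I$-steps, all staying in $R^+$ because their $\pi_I$-image lies in $(R/I)^+$, is exactly the right mechanism; this is the same fibre-connectivity fact the authors invoke later (in the proof of Proposition~\ref{prop: Gen(Phi) closed under union and intersection}), there also justified by Proposition~\ref{prop: existence of paths}. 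One small streamlining: rather than ``cancel any subcollection summing to zero,'' note that if you write $\beta-\alpha=\sum c_i\theta_i$ with $\theta_i\in I$ and expand it as $|c_i|$ copies of $\mathrm{sgn}(c_i)\,\theta_i$, then no subcollection sums to zero automatically, since each $\theta_i$ appears with a fixed sign and $I$ is linearly independent; this avoids having to iterate the cancellation step. The remaining equivalences $(i)\Leftrightarrow(iv)\Leftrightarrow(ii)\Leftrightarrow(v)$ and $(ii)\Rightarrow(iii)$ are definitional bookkeeping, which you handle correctly.
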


\np 
\begin{prop}
\label{prop: inflation of an inflation is an inflation 1}
    Let $I \subseteq J \subseteq S$ and let $X \subseteq (R_I)^+$, $T \subseteq (R_{J} / I)^+$, and $\Psi \subseteq (R/ J)^+$. Then

    \begin{equation}
    \label{eqn: inflation of an inflation is an inflation part 1}
        \inf_I^S(\inf_{J / I}^{S / I}(\Psi, T), X) = \inf_J^S(\Psi, \inf_I^J(T, X)) \ ,
    \end{equation}
    where the natural isomorphisms $(R_J)_I \cong R_I$, $(R/I)_{J/I} \cong R_J/I$, and $(R/I)/(J/I) \cong R/J$
    are used to make sense of $X$ in the right-hand side and of $T$ and $\Psi$ in the left-hand side of the identity. \qed 
\end{prop}

\np 
\begin{prop}
    \label{prop: inflation of an inflation is an inflation part 2}
    Assume $I \subseteq J$ and $\Phi = \inf_I^S(\Psi,X) = \inf_J^S(\Theta, Y)$. 
    Then $\Psi = \inf_{J/I}^{S/I} (\Theta, Z)$ for some $Z \subseteq R_{J/I}^+$.
\end{prop}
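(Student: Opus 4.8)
The plan is to write $Z$ down explicitly and then verify the asserted identity of subsets directly from the two defining formulas for $\Phi$, using only the compatibility of the projections. Write $\pi_I,\pi_J$ for the canonical projections out of $E$ and $\bar\pi\colon E/\Span I\to (E/\Span I)/\Span(J/I)$ for the further projection. Under the natural isomorphism $(S/I)/(J/I)\cong S/J$ used after Proposition~\ref{prop: inflation of an inflation is an inflation 1} we have $\pi_J=\bar\pi\circ\pi_I$ and $(R/I)/(J/I)=R/J$; moreover, as for any QRS, $(R/I)^+=\pi_I(R^+)\setminus\{0\}$, and the kernel of $\bar\pi$ meets $R/I$ exactly in $R_{J/I}$. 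I will treat these facts as part of the background of Section~1.

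First I would set $Z:=\Psi\cap R_{J/I}^+$; this is the only candidate, since in any equality $\Psi=\inf_{J/I}^{S/I}(\Theta',Z')$ the set $\bar\pi^{-1}(\Theta')$ is disjoint from $R_{J/I}^+$ (because $\bar\pi$ annihilates $R_{J/I}$ while $0\notin\Theta'$), forcing $Z'=\Psi\cap R_{J/I}^+$. It then remains to check $\Psi=\bar\pi^{-1}(\Theta)\cup Z$. I would do this by taking an arbitrary $\bar\alpha\in(R/I)^+$ and splitting on whether $\bar\pi(\bar\alpha)=0$. If $\bar\pi(\bar\alpha)=0$ then $\bar\alpha\in R_{J/I}^+$, the right-hand side reduces to the condition $\bar\alpha\in Z$, and this is equivalent to $\bar\alpha\in\Psi$ by the definition of $Z$. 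If $\bar\pi(\bar\alpha)\neq0$, I would choose $\alpha\in R^+$ with $\pi_I(\alpha)=\bar\alpha$; then $\pi_I(\alpha)\neq0$ puts $\alpha$ outside $R_I^+\supseteq X$, so $\alpha\in\Phi\iff\pi_I(\alpha)\in\Psi\iff\bar\alpha\in\Psi$, whereas $\pi_J(\alpha)=\bar\pi(\bar\alpha)\neq0$ puts $\alpha$ outside $R_J^+\supseteq Y$, so $\alpha\in\Phi\iff\pi_J(\alpha)\in\Theta\iff\bar\alpha\in\bar\pi^{-1}(\Theta)$. Comparing the two chains (and noting $\bar\alpha\notin R_{J/I}$, so $\bar\alpha\notin Z$) gives $\bar\alpha\in\Psi\iff\bar\alpha\in\bar\pi^{-1}(\Theta)\cup Z$. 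Hence $\Psi=\inf_{J/I}^{S/I}(\Theta,Z)$.

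I do not expect a genuine obstacle; the proof is essentially bookkeeping. The one point needing care is that the two correction terms $X\subseteq R_I^+$ and $Y\subseteq R_J^+$ must not interfere, and this is precisely what the case $\bar\pi(\bar\alpha)\neq0$ resolves: every lift $\alpha$ of such a $\bar\alpha$ automatically avoids both $R_I^+$ and $R_J^+$, so its membership in $\Phi$ is dictated by $\Psi$ through one description and by $\Theta$ through the other, and these must agree. An essentially equivalent alternative would be to first apply Proposition~\ref{prop: conditions equivalent to being an inflation}(ii) to the QRS $R/I$ --- lifting a pair $\bar\alpha,\bar\beta$ with common nonzero $\bar\pi$-image to a pair in $R^+$ with common nonzero $\pi_J$-image and invoking that $\Phi$ is inflated from $J$ --- to conclude that $\Psi$ is inflated from $J/I$, and then to identify the resulting base datum with $\Theta$ by the same computation as above.
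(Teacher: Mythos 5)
Your proof is correct, and it takes a genuinely different route from the paper's. The paper argues structurally: it notes that $Y=\Phi\cap\Span J$ and that $Y$, viewed inside $R_J^+$, is itself an inflation $Y=\inf_I^J(Z,X)$ from $I$ (checked via Proposition~\ref{prop: conditions equivalent to being an inflation}(ii)); substituting this into $\Phi=\inf_J^S(\Theta,Y)$ and applying the composition identity of Proposition~\ref{prop: inflation of an inflation is an inflation 1} rewrites $\Phi$ as $\inf_I^S(\inf_{J/I}^{S/I}(\Theta,Z),X)$, whence $\Psi=\inf_{J/I}^{S/I}(\Theta,Z)$ by the uniqueness of the inflation data attached to a fixed $I$. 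You instead exhibit $Z=\Psi\cap R_{J/I}^+$ explicitly and verify the set identity fiber by fiber, the key observation being that any lift of a root with nonzero image in $R/J$ automatically avoids both correction terms $X\subseteq R_I^+$ and $Y\subseteq R_J^+$, so its membership in $\Phi$ is governed by $\Psi$ in one description and by $\Theta$ in the other. The two arguments identify the same $Z$ and are of comparable length; yours is more elementary and self-contained (it bypasses Proposition~\ref{prop: inflation of an inflation is an inflation 1} and the injectivity remark and makes $Z$ explicit), while the paper's reuses the already-established algebra of nested inflations, which is the tool it continues to exercise in the proofs of Theorem~\ref{theorem: canonical inflation} and Proposition~\ref{prop: Gen(Phi) for primitive Psi}.
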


\begin{proof}
     Note that $Y \subseteq  \Span J \subseteq  \Span S$. Since
$Y = \Phi \cap \Span J$, using Proposition~\ref{prop: conditions equivalent to being an inflation}~(ii), 
one verifies easily that $Y = \inf_I^J(Z,X)$
for some $Z \subseteq  \Span J/I$. 
Then \eqref{eqn: inflation of an inflation is an inflation part 1} above implies
\[
\Phi = \inf_I^S(\Psi,X) = \inf_J^S(\Theta, Y) = \inf_J^S(\Theta, \inf_I^J(Z,X)) =
\inf_I^S(\inf_{J/I}^{S/I}(\Theta,Z), X) \ ,
\]
proving that $\Psi = \inf_{J/I}^{S/I} (\Theta, Z)$. \end{proof}

\np 
Next, we introduce the set $\Gen(\Phi)$ for each subset $\Phi$ of $R^+$, which captures the 
subsets of $S$ from which $\Phi$ can be inflated.

\np 
\begin{definition}
\label{def: Gen(Phi)}
Given $\Phi \subseteq R^+$, define 
$$\Gen(\Phi) := \{ K \subseteq S \, | \, \Phi = \inf_K^S(\Theta, Y) \text{ for some } \Theta \text{ and some } Y\} \ ,$$    
the sets from which $\Phi$ is inflated.
\end{definition}

\np 
\begin{remark} The following properties of $\Gen(\Phi)$ are obvious:
\begin{enumerate} 
\item[(i)] $\emptyset, S \in \Gen(\Phi)$
\item[(ii)] $\Gen(\Phi) = 2^S$ if and only if $\Phi = \emptyset$ or $\Phi = R^+$.
\item[(iii)] $\Gen(\Phi) = \Gen(\Phi^c)$.
\end{enumerate}
\end{remark}

\begin{prop}
    \label{prop: Gen(Phi) closed under union and intersection}
    Let $I, J \in \Gen(\Phi)$. Then $I \cup J \in \Gen(\Phi)$ and $I \cap J \in \Gen(\Phi)$.
\end{prop}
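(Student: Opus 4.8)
The plan is to use the characterization of inflations from Proposition \ref{prop: conditions equivalent to being an inflation}, specifically criterion (ii): a subset $\Phi \subseteq R^+$ is inflated from $K$ if and only if whenever $\pi_K(\alpha) = \pi_K(\beta) \neq 0$ for $\alpha, \beta \in R^+$, then $\alpha \sim_\Phi \beta$. Equivalently, since $\pi_K(\alpha) = \pi_K(\beta)$ means $\alpha - \beta \in \Span K$, the condition is that whenever $\alpha, \beta \in R^+$ with $\alpha - \beta \in \Span K$ and $\pi_K(\alpha) \neq 0$, one has $\alpha \sim_\Phi \beta$.

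For the intersection: suppose $I, J \in \Gen(\Phi)$, and let $\alpha, \beta \in R^+$ with $\alpha - \beta \in \Span(I \cap J)$ and $\pi_{I \cap J}(\alpha) \neq 0$. Since $\Span(I \cap J) \subseteq \Span I$, we have $\alpha - \beta \in \Span I$; and $\pi_I(\alpha) \neq 0$ as well, because $\pi_{I \cap J}(\alpha) \neq 0$ forces $\alpha \notin \Span I$ — here I should double-check: $\pi_{I\cap J}(\alpha) \neq 0$ means $\alpha \notin \Span(I \cap J)$, which does not immediately give $\alpha \notin \Span I$. The cleaner route is criterion (iv) of the same proposition: $\Phi$ is inflated from $K$ iff for every $\overline\alpha \in R/K$, the fiber $\pi_K^{-1}(\overline\alpha)$ lies entirely in $\Phi$ or entirely in $\Phi^c$; equivalently (criterion (iii)) whenever $\alpha \in R^+$, $\theta \in \pm K$, $\pi_K(\alpha) \neq 0$ and $\alpha + \theta \in R^+$, then $\alpha + \theta \sim_\Phi \alpha$. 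I would use (iii). For $I \cap J$: take $\alpha \in R^+$, $\theta \in \pm(I \cap J)$ with $\alpha + \theta \in R^+$ and $\pi_{I\cap J}(\alpha) \neq 0$; I must deduce $\alpha + \theta \sim_\Phi \alpha$. Since $\theta \in \pm I$ and $I \in \Gen(\Phi)$, the only gap is verifying $\pi_I(\alpha) \neq 0$. If $\pi_I(\alpha) = 0$ then $\alpha \in R_I^+$, so $\alpha$ is supported on $I$; but $\alpha + \theta$ with $\theta \in \pm I$ is then also in $\Span I$, and one argues using the partial order / support that $\pi_{I\cap J}(\alpha) = 0$ would follow only if... — this is the delicate point. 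Actually the safe argument: the condition in (iii) with $\pi_{I\cap J}(\alpha)\neq 0$ and $\alpha+\theta\in R^+$, $\theta \in \pm(I\cap J)$ — if $\pi_I(\alpha)=0$, then $\alpha,\alpha+\theta \in R_I$, so both lie in $R_I^+$; but then $\pi_{I\cap J}(\alpha)\neq 0$ simply says $\alpha \notin \Span(I\cap J)$, which is consistent, and we need another relation between $\alpha$ and $\alpha+\theta$ in $\Phi$. However $\alpha, \alpha+\theta \in R_I^+$ and $\alpha+\theta \sim_\Phi \alpha$ must hold — but within $R_I^+$ we have no control from $I \in \Gen(\Phi)$ directly. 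This shows criterion (iii) is not quite enough on its own; I would instead combine it with criterion (ii) applied simultaneously to $I$ and to $J$, chaining through an intermediate root.

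The cleanest overall strategy, and the one I would actually write up, is: for the union, show $I \cup J \in \Gen(\Phi)$ directly via Proposition \ref{prop: inflation of an inflation is an inflation part 2} or \ref{prop: inflation of an inflation is an inflation 1} — since $\Phi = \inf_I^S(\Psi, X)$ and simultaneously $\Phi = \inf_J^S(\Theta, Y)$, one can hope to inflate from $I \cup J$ by combining the data; concretely, apply Proposition \ref{prop: inflation of an inflation is an inflation part 2} with the roles arranged so that $\Psi$ is itself inflated from $(I\cup J)/I$, then use Proposition \ref{prop: inflation of an inflation is an inflation 1} to rewrite $\Phi$ as an inflation from $I \cup J$. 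For the intersection, I would use criterion (ii) together with a path argument: given $\alpha, \beta \in R^+$ with $\alpha - \beta \in \Span(I \cap J)$, $\pi_{I \cap J}(\alpha) \neq 0$, form a reduced path (Propositions \ref{prop: existence of paths}, \ref{prop: any path can be reduced}) from $\beta$ to $\alpha$ with steps in $\pm(I \cap J) \subseteq \pm I \cap \pm J$; since $\pi_{I\cap J}$ is nonzero along the path the partial sums project to nonzero elements of $R/(I\cap J)$, hence of $R/I$ and $R/J$ (as $\Span(I\cap J) \subseteq \Span I \cap \Span J$ forces the relevant fibers to be nonzero), so by criterion (ii) for $I$ each step preserves $\sim_\Phi$, giving $\alpha \sim_\Phi \beta$. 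The main obstacle, as flagged above, is precisely handling the case where a partial sum along the path has zero image in $R/I$ or $R/J$ even though it has nonzero image in $R/(I\cap J)$; resolving this requires observing that if a root $\mu$ lies in $\Span I$ but not in $\Span(I\cap J)$, then its support meets $I \setminus J$, and walking the path one step at a time keeps us able to invoke whichever of $I$ or $J$ currently sees a nonzero projection — or, more robustly, passing to $I \cup J$ first (which we have just shown lies in $\Gen(\Phi)$) and working relative to the quotient $R/(I\cap J) \to R/(I\cup J)$ to reduce to the two cases where the support is confined appropriately. I expect this support-tracking bookkeeping to be the only real work; everything else is formal manipulation of the inflation identities.
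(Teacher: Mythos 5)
Your proposal does not close either half of the statement. For the union, the route through Propositions~\ref{prop: inflation of an inflation is an inflation 1} and~\ref{prop: inflation of an inflation is an inflation part 2} is circular: Proposition~\ref{prop: inflation of an inflation is an inflation part 2} takes as \emph{hypothesis} that $\Phi$ is already an inflation from the larger set, which is exactly what you are trying to establish for $I\cup J$. No amount of formal manipulation of the inflation identities produces the union statement; the actual content is geometric. The paper's argument is that for any $K\subseteq S$ and any nonzero $\nu\in R/K$, the fibre $\pi_K^{-1}(\nu)$ is connected by steps in $\pm K$ (a consequence of Proposition~\ref{prop: existence of paths}). Applied to $K=I\cup J$: two positive roots in the same nonzero fibre of $\pi_{I\cup J}$ are joined by a path whose steps lie in $\pm I$ or $\pm J$, every intermediate root still has nonzero image in $R/I$ and in $R/J$ (because $\Span I,\Span J\subseteq\Span(I\cup J)$), so criterion (iii) of Proposition~\ref{prop: conditions equivalent to being an inflation} for $I$ or for $J$ applies at each step and the whole fibre is $\sim_\Phi$-constant. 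You never supply this (or any substitute) for the union.

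For the intersection you correctly locate the obstacle --- $\pi_{I\cap J}(\alpha)\neq 0$ does not give $\pi_I(\alpha)\neq 0$ --- but you never resolve it, and at one point you assert the false implication that nonvanishing in $R/(I\cap J)$ ``forces'' nonvanishing in $R/I$ and $R/J$ (the inclusion $\Span(I\cap J)\subseteq\Span I$ gives exactly the opposite direction). The missing observation is elementary: since $I$ and $J$ are subsets of the base $S$, which is linearly independent, one has $\Span I\cap\Span J=\Span(I\cap J)$. Hence if $\pi_{I\cap J}(\alpha)\neq 0$ then $\pi_I(\alpha)\neq 0$ or $\pi_J(\alpha)\neq 0$; and if $\pi_{I\cap J}(\alpha)=\pi_{I\cap J}(\beta)$ then $\alpha-\beta\in\Span(I\cap J)$ lies in both $\Span I$ and $\Span J$, so $\pi_I(\alpha)=\pi_I(\beta)$ and $\pi_J(\alpha)=\pi_J(\beta)$. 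Whichever projection is nonzero, criterion (ii) for that generator gives $\alpha\sim_\Phi\beta$ immediately --- no paths or support bookkeeping are needed for the intersection. Your closing suggestion to ``pass to $I\cup J$ first, which we have just shown lies in $\Gen(\Phi)$'' also leans on the union case you have not actually proved.
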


\begin{proof}
    The statement about $I \cap J$ is straightforward. The proof about $I \cup J$ is an immediate consequence 
    of the fact that, for any $K \subseteq  S$ and non-zero $\nu \in R/K$, any two elements of $\pi_K^{-1}(\nu) \cap R^+$ 
    may be joined by a path with steps in $K$,
     which is a consequence of Proposition~\ref{proposition: existence of paths}. 
\end{proof}
 
\subsection{Canonical form}

 The aim of this subsection is to provide a canonical way to describe a subset of $R^+$ as an inflation.  
 Recalling from Remark~\ref{Remark:inflations for reducible} that any inflation in a reducible system 
 $R$ can be viewed naturally as a disjoint union of inflations within the irreducible components of $R$, 
 we persist with the global assumption that $R$ is irreducible.

\begin{definition} \label{def_primitive}
Let $R$ be an irreducible QRS. 
   A subset $\Phi \subseteq  R^+$ is {\em primitive} if $\Phi \neq \emptyset, \Phi \neq R^+$, and $\Gen(\Phi) = \{\emptyset, S\}$.
\end{definition}

\begin{remark}\label{rem: phi primitive iff phi complement primitive}
\begin{enumerate}
 \item[(i)] If $\Phi$ is primitive, then $\Phi = \inf_\emptyset^S(\Phi, \emptyset)$ 
 and $\Phi = \inf_S^S(\emptyset, \Phi)$ are the only presentations of $\Phi$ as an inflation.
 \item[(ii)] $\Phi$ is primitive if and only if $\Phi^c$ is primitive.  
    \item[(iii)] If $R$ is reducible, then every subset $\Phi \subseteq R^+$ can be written as some non-trivial inflation 
    because every set is an inflation with respect to any quotient that sends an entire 
    irreducible component to $0$.  Hence the obvious 
    extension of Definition~\ref{def_primitive} to subsets of reducible 
    QRSs would yield no primitive sets.
 \end{enumerate}
\end{remark}

 \begin{theorem}
 \label{theorem: canonical inflation}
     Let $R$ be an irreducible QRS, and let $\Phi \subseteq R^+$. Then $\Phi = \inf_I^S(\Psi, X)$ where $I \subsetneq S$ 
     and one of the following mutually exclusive alternatives holds:
     \begin{enumerate}
        \item[(i)] $\Psi$ is primitive 
        \item[(ii)] $\Psi = \emptyset$ or $\Psi= (R/I)^+$.
     \end{enumerate}
     Moreover, the set $I$ is uniquely determined in case (i) and in case (ii) there is a minimum such $I$.  
 \end{theorem}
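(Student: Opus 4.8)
The plan is to use the structure of $\Gen(\Phi)$ established in Proposition \ref{prop: Gen(Phi) closed under union and intersection}, which asserts that $\Gen(\Phi)$ is a lattice under union and intersection. I would split into two cases according to whether $\Phi \in \{\emptyset, R^+\}$ or not. First suppose $\Phi \neq \emptyset$ and $\Phi \neq R^+$. Then I consider the set of $K \in \Gen(\Phi)$ for which the corresponding $\Psi = \pi_K(\Phi) \setminus \{0\}$ lies in $\{\emptyset, (R/K)^+\}$; call these the \emph{trivializing} elements of $\Gen(\Phi)$. Since $S \in \Gen(\Phi)$ trivializes (here $R/S$ is a point, recall Remark \ref{rem1.15}), this set is nonempty. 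Because $\Gen(\Phi)$ is closed under intersection and $\Phi \neq \emptyset, R^+$, one checks that the intersection of two trivializing elements is again trivializing: if $\pi_{K_1}(\Phi)$ and $\pi_{K_2}(\Phi)$ are each trivial, then by Proposition \ref{prop: inflation of an inflation is an inflation part 2} applied to $K_1 \cap K_2 \subseteq K_i$, the quotient $\Psi_{K_1\cap K_2}$ inflates onto each $\Psi_{K_i}$, and an inflation of a set that is $\emptyset$ or everything is again $\emptyset$ or everything — so $\Psi_{K_1 \cap K_2}$ is empty on some part and full on the complementary part, and connectedness of $R$ forces it to be uniformly $\emptyset$ or uniformly full. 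Hence there is a \emph{smallest} trivializing element $I_0$. If $I_0 = \emptyset$, then $\Phi$ itself equals $\emptyset$ or $R^+$ (viewing $R/\emptyset = R$), contradicting our case assumption; so $I_0$ is a proper nonempty subset, $I_0 \neq S$ is automatic unless $R^+$ is forced trivial, and we land in alternative (ii) with $I = I_0$, and $I_0 \subsetneq S$ since $\Phi$ primitive would instead be case (i) — more precisely, if no proper trivializing set smaller than $S$ works we examine whether $\Phi$ is primitive.

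Let me restructure: I would argue that \emph{exactly one} of the following holds for $\Phi \neq \emptyset, R^+$. Either every $K \in \Gen(\Phi)\setminus\{S\}$ satisfies $K = \emptyset$, in which case $\Psi = \Phi$ with $I = \emptyset$, and $\Phi$ itself must then be primitive (this is the definition), giving alternative (i) with $I = \emptyset$. Or $\Gen(\Phi)$ contains a proper nonempty set. In the latter case, among those $K \in \Gen(\Phi)$, $K \neq S$, for which $\Psi_K$ is $\emptyset$ or $(R/K)^+$, take the smallest such $I$ if one exists — this gives alternative (ii). If no such proper $K$ trivializes $\Phi$, then pick $I$ maximal among the proper elements of $\Gen(\Phi)$; I claim $\Psi_I$ is primitive in $(R/I)^+$. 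Indeed $\Psi_I \neq \emptyset, (R/I)^+$ by assumption, and if $\Psi_I = \inf_{J/I}^{S/I}(\Theta, Z)$ with $I \subsetneq J \subsetneq S$, then by Proposition \ref{prop: inflation of an inflation is an inflation 1} we get $J \in \Gen(\Phi)$, contradicting maximality of $I$; and $J = S$ would make $\Psi_I$ trivial in $(R/I)^+$-modulo-the-top, i.e. $\Theta \subseteq (R/S)^+ = \emptyset$ forces $\Psi_I = \pi_{S/I}^{-1}(\emptyset) \cup Z = Z \subseteq R_{S/I}^+$ closed/coclosed in a way that... actually here I need that $\Psi_I$ trivial via $J=S$ would put us in alternative (ii), contradicting our sub-case. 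This yields alternative (i).

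For \textbf{uniqueness}, suppose $\Phi = \inf_{I_1}^S(\Psi_1, X_1) = \inf_{I_2}^S(\Psi_2, X_2)$ with each pair satisfying (i) or (ii). By Proposition \ref{prop: Gen(Phi) closed under union and intersection}, $I_1 \cap I_2$ and $I_1 \cup I_2$ lie in $\Gen(\Phi)$. If both are of type (ii), minimality of each $I_j$ together with $I_1 \cap I_2 \in \Gen(\Phi)$ trivializing $\Phi$ (again by the intersection argument above) forces $I_1 = I_1 \cap I_2 = I_2$. If both are of type (i): using Proposition \ref{prop: inflation of an inflation is an inflation part 2} with $I_1 \cap I_2 \subseteq I_1$, the set $\Psi_{I_1 \cap I_2}$ inflates onto the primitive $\Psi_1$, so by primitivity the intermediate set is $\emptyset$ or everything, i.e. $I_1 \cap I_2$ maps to $I_1$ in $S/(I_1\cap I_2)$ via $\emptyset$ or the whole — but $\emptyset$ would make $\Psi_{I_1\cap I_2}$ trivial contradicting (say via non-primitivity unless equal), so $I_1 \cap I_2 = I_1$; symmetrically $I_1 \cap I_2 = I_2$. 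Finally, mixed case (i)/(ii) is excluded because type (ii) gives a trivializing set while a primitive $\Psi_1$ at $I_1$ forbids $\Phi$ from being trivialized by any $K \supseteq I_1$ and the intersection/union argument propagates a contradiction. Once $I_1 = I_2 = I$, Remark (ii) on inflations (uniqueness of the pair $(\Psi, X)$ for fixed $I$) gives $\Psi_1 = \Psi_2$ and $X_1 = X_2$.

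\textbf{Main obstacle.} The delicate point is the interaction between the lattice structure of $\Gen(\Phi)$ and the \emph{trivializing}/\emph{primitive} dichotomy — specifically, verifying that "trivial" ($\emptyset$ or full) is preserved under passing to a smaller generating set via Proposition \ref{prop: inflation of an inflation is an inflation part 2}, which is exactly where connectedness of $R$ must be invoked (a set that is $\emptyset$ on one connected component's worth of quotient and full on another is possible only if $R$ disconnects). Getting the bookkeeping right so that (i) and (ii) are genuinely mutually exclusive, and that the "smallest $I$" in (ii) and the "maximal proper $I$" producing (i) never collide, is the crux; everything else is a routine application of the inflation-composition identities.
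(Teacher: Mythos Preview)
Your existence argument is essentially fine and close in spirit to the paper's: pick a maximal proper $I \in \Gen(\Phi)$ and observe that $\Psi_I$ must be primitive or trivial, invoking Lemma~\ref{lemma: supp(Phi) defines canonical inflater for inflations of empty set} to select the smallest $I$ in the trivial case. Your ``intersection of trivializers is a trivializer'' claim is correct, though the clean reason is that in a connected $R$ the highest root has full support, so at most one of $\supp\Phi$, $\supp\Phi^c$ is proper, and the proper trivializers are exactly the sets containing that support.

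The genuine gap is in uniqueness for the primitive case. When you write ``using Proposition~\ref{prop: inflation of an inflation is an inflation part 2} with $I_1 \cap I_2 \subseteq I_1$, the set $\Psi_{I_1 \cap I_2}$ inflates onto the primitive $\Psi_1$, so by primitivity\ldots'', you have the direction backwards: Proposition~\ref{prop: inflation of an inflation is an inflation part 2} tells you $\Psi_{I_1\cap I_2} = \inf_{I_1/(I_1\cap I_2)}^{S/(I_1\cap I_2)}(\Psi_1,\cdot)$, which expresses $\Psi_{I_1\cap I_2}$ as an inflation, not $\Psi_1$. Primitivity of $\Psi_1$ gives you nothing here. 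What you need is the inclusion $I_1 \subseteq I_1 \cup I_2$, which yields $\Psi_1 = \inf_{(I_1\cup I_2)/I_1}^{S/I_1}(\Psi_{I_1\cup I_2},\cdot)$; \emph{now} primitivity of $\Psi_1$ forces $I_1 \cup I_2 = I_1$ or $I_1 \cup I_2 = S$.

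Even after this correction, the case $I_1 \cup I_2 = S$ remains, and in particular the sub-case $I_1 \cap I_2 = \emptyset$ cannot be handled by the lattice operations alone. This is where the paper introduces Lemma~\ref{lemma: inflation from I' cup J'}: when $I \cup J = S$ and $I \cap J = \emptyset$, one passes to the ``boundary'' sets $I' = I \cap J^\perp$ and $J' = J \cap I^\perp$ and shows that $\Phi$ is an inflation of $\emptyset$ or $(R/(I'\cup J'))^+$ from $I' \cup J'$, which then contradicts primitivity of $\Psi$ (or forces $\Phi$ trivial). Your ``intersection/union argument propagates a contradiction'' does not supply this; it is a separate geometric fact about how roots outside $R_I \cup R_J$ are all linked to the highest root through $\Phi$. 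Without something playing the role of Lemma~\ref{lemma: inflation from I' cup J'}, two distinct maximal proper elements of $\Gen(\Phi)$ whose union is $S$ cannot be ruled out, and uniqueness fails.
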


\begin{definition}
    Following  Theorem~\ref{theorem: canonical inflation}, we call the expression $\Phi = \inf_I^S(\Psi, X)$ where either
    \begin{enumerate}
    \item[(i)]$\Psi$ is primitive
    \item[] or 
    \item[(ii)] $\Psi = \emptyset$ or $(R/I)^+$ and $I$ is the minimal
    subset of $S$ with this property
    \end{enumerate}
 the {\it canonical form of $\Phi$}. In this case, we also say that $\Phi$ is \emph{canonically inflated from} $I$. 
\end{definition}

\np 
Note that in \cite{DDMRWW}, the notion of a ``simple form of a permutation'' was studied.
The canonical form of an inversion set in type $\mathbb A$ is the counterpart of the
simple form of the corresponding permutation.

\np  Before proving the theorem, we establish the following useful result.

 \begin{lemma}
     \label{lemma: inflation from I' cup J'}
     Assume that $I \cap J = \emptyset$ and $I \cup J = S$.
    Define 
    \[I' := I \cap J^\perp = \{\theta \in I \, | \, \theta \perp \lambda \ \forall  
    \lambda \in J\}  
    \quad
    {\text{and}} \quad
    J' := J \cap I^\perp = \{\theta \in J \, | \, \theta \perp \lambda \ \forall
    \lambda \in I\} \ .\]
    If $\Phi = \inf_I^S(\Psi, X) = \inf_J^S(\Theta,Y)$, then $\Phi = \inf_{I' \cup J'}^S(\Xi, Z)$,
    where $\Xi = \emptyset$ or $\Xi = (R/(I' \cup J'))^+$.
 \end{lemma}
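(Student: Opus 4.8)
The plan is to use the fact that $\Phi$ being inflated from both $I$ and $J$, together with $I\sqcup J=S$, forces $\Phi$ to be extremely constrained once we look at the roots supported on the "interface" between $I$ and $J$. First I would record the structural consequence of $I\cap J=\emptyset$ and $I\cup J=S$: writing $I''=I\setminus I'$ and $J''=J\setminus J'$, the adjacency diagram of $R$ decomposes so that $I'$ is orthogonal to all of $J$ and $J'$ is orthogonal to all of $I$; thus $I'\cup J'$ is a union of connected components of the adjacency diagram, while $I''\cup J''$ is the complementary union of components. In particular, since $R$ is connected (we may reduce to this case, or else argue component-wise), $I''\cup J''$ is itself connected — this is the key point — because every simple root of $I''$ is linked to some root of $J$ hence to some root of $J''$, and vice versa. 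Equivalently, $I''\cup J'' = S\setminus(I'\cup J')$ is connected, and no root of $I'\cup J'$ is linked to any root of $I''\cup J''$.

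Next I would show $\Phi=\inf_{I'\cup J'}^S(\Xi,Z)$ for some $\Xi\subseteq (R/(I'\cup J'))^+$ and $Z\subseteq R_{I'\cup J'}^+$; this is immediate from Proposition~\ref{prop: conditions equivalent to being an inflation}(iii), since any simple root $\theta\in I'\cup J'$ is either in $I$ or in $J$, so $\alpha\sim_\Phi\alpha+\theta$ holds by the hypothesis that $\Phi$ is inflated from whichever of $I,J$ contains $\theta$. (Alternatively, $I'\cup J'\subseteq I\cap J'\cup\cdots$; more cleanly, $I'\subseteq$ something inflated-from together with Proposition~\ref{prop: Gen(Phi) closed under union and intersection}.) So the only thing left to prove is that $\Xi$ is either empty or all of $(R/(I'\cup J'))^+$, i.e.\ that $\Phi$ restricted to the roots not supported in $I'\cup J'$ is "all or nothing" on each fiber of $\pi_{I'\cup J'}$ — but since it is already an inflation from $I'\cup J'$, this is the statement that $\Xi$ contains no proper nonempty primitive piece, which we get by showing $\Xi$ is connected-fiber-wise determined. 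Concretely: take any $\overline\alpha\in(R/(I'\cup J'))^+$; its support (in $S/(I'\cup J')$, i.e.\ the image of $S\setminus(I'\cup J')=I''\cup J''$) lies inside the connected set $I''\cup J''$. I would prove that any two nonzero elements of $R/(I'\cup J')$ can be joined, and more to the point that $\Xi=\emptyset$ or $\Xi=(R/(I'\cup J'))^+$ by using that $\Phi$ is inflated from $I$: a root $\overline\alpha$ lifts to some $\alpha\in R^+$ with $\pi_I(\alpha)\in(R/I)^+$, and the value of $\Xi$ on $\overline\alpha$ is governed by membership of $\alpha$ in $\Phi$, which by $\Phi=\inf_I^S(\Psi,X)$ depends only on $\pi_I(\alpha)$; similarly using $J$.

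The crux is then: for $\overline\alpha,\overline\beta\in (R/(I'\cup J'))^+$ I want to show $\overline\alpha\in\Xi\iff\overline\beta\in\Xi$. Using connectedness of $I''\cup J''$ and Proposition~\ref{prop: existence of paths} (applied in $R/(I'\cup J')$, or lifted to $R$), it suffices to treat the case where $\overline\beta=\overline\alpha+\theta$ for a simple root $\theta\in S/(I'\cup J')$ coming from $I''$ (the $J''$ case being symmetric). Lift via Lemma~\ref{lemma: lifts of sums in quotients}(ii) to $\alpha,\beta=\alpha+\vartheta\in R^+$ with $\vartheta$ a root supported in $I''$, hence $\vartheta\in R_I$; but then $\pi_I(\beta)=\pi_I(\alpha)$, so $\alpha\sim_\Phi\beta$ because $\Phi$ is inflated from $I$, giving $\overline\alpha\sim_\Xi\overline\beta$. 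Wait — this needs $\vartheta\in R_I^+$, which holds since $\supp\vartheta\subseteq I''\subseteq I$. Running the same argument with $J$ in place of $I$ for the $J''$-simple-roots handles the other steps. Since $I''\cup J''$ is connected, $\Xi$ is constant on all fibers, i.e.\ $\Xi=\emptyset$ or $(R/(I'\cup J'))^+$, which is exactly the claim.

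The main obstacle I anticipate is the lifting step: I must make sure that when I decompose a root of $R/(I'\cup J')$ along a simple root $\theta$ in the image of $I''$, the actual lift $\vartheta\in R$ of $\theta$ (or of a small multiple) can be chosen genuinely inside $R_I$ and not merely with support meeting $I$ — this is where Lemma~\ref{lemma: lifts of sums in quotients} and the no-linking between $I'\cup J'$ and $I''\cup J''$ must be combined carefully, and where one might instead need to iterate over a reduced path rather than a single step. A second, minor subtlety is the reduction to $R$ connected: if $R$ is disconnected one applies the argument to each component and reassembles, noting $I',J'$ are defined component-wise; I would dispatch this in one sentence at the start.
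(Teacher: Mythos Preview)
Your structural claim that $I'\cup J'$ is a union of connected components of the adjacency diagram is false, and so is the derived claim that no simple root of $I'\cup J'$ is linked to one of $I''\cup J''$. Take $R=\mathbb A_3$ with $I=\{\theta_1,\theta_2\}$, $J=\{\theta_3\}$: then $I'=\{\theta_1\}$, $J'=\emptyset$, but $\theta_1\in I'$ is adjacent to $\theta_2\in I''$. Consequently your assertion $\supp\vartheta\subseteq I''$ in the lifting step is wrong in general: a lift of the image of $\theta_2$ under $\pi_{I'\cup J'}$ may well be $\theta_1+\theta_2$.

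The approach is salvageable because the conclusion you actually need, $\vartheta\in R_I$, is true for a subtler reason. Since $\vartheta$ lifts a simple root $\theta_0\in I''$ modulo $I'\cup J'$, its support lies in $\{\theta_0\}\cup I'\cup J'$ and is connected. By definition of $J'$, no element of $J'$ is adjacent to any element of $I$; hence nothing in $J'$ is adjacent to $\theta_0$ or to $I'$, so the connected support must lie in $\{\theta_0\}\cup I'\subseteq I$. With this correction (and the symmetric one for $J''$), your path-in-the-quotient argument goes through.

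The paper's proof takes a different and shorter route that avoids lifts entirely. It works directly in $R$, anchoring everything to the highest root $\tau$. First, any $\gamma\in R^+\setminus(R_I\cup R_J)$ is linked to $\tau$ by a chain of simple-root steps staying outside $R_I\cup R_J$; each step lies in $I$ or $J$, and the relevant inflation hypothesis gives $\gamma\sim_\Phi\tau$. Then for $\gamma\in R_I\setminus R_{I'}$ one picks $\beta\in J$ adjacent to $\supp\gamma$, so $\gamma+\beta\in R^+\setminus(R_I\cup R_J)$ while $\gamma\sim_\Phi\gamma+\beta$ by inflation from $J$. Since $R_{I'\cup J'}=R_{I'}\cup R_{J'}$ (as $I'\perp J'$), every root outside $R_{I'\cup J'}$ is $\sim_\Phi\tau$, giving the conclusion. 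Your quotient-and-lift strategy reaches the same endpoint but carries more bookkeeping; the paper's highest-root anchor is what lets it bypass the support subtleties you flagged.
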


 \begin{proof}
 For $\alpha, \beta \in R^+$ and $\Phi \subseteq R^+$ we write $\alpha \sim_\Phi \beta$ if either 
both $\alpha, \beta \in \Phi$ or both $\alpha, \beta \notin \Phi$.
      Let $\tau$ denote the highest root of $R^+$. 

\np 
Since $I' \subseteq I$ and $J' \subseteq I^\perp$, we have $R_{I'\cup J'} = R_{I'} \times R_{J'}$.
Noting that $\tau \not \in R_{I'\cup J'}$ (unless $I = \emptyset$ or $J = \emptyset$ when there is nothing to prove),
we conclude that the statement of the lemma is equivalent to proving that $\gamma \sim_\Phi \tau$ for
every $\gamma \in R^+ \backslash (R_{I'} \cup R_{J'})$. To prove this we use the fact that
\[ R^+ \backslash (R_{I'} \cup R_{J'}) = (R^+ \backslash (R_{I} \cup R_{J})) \cup 
 ((R_{I} \cup R_{J}) \backslash (R_{I'} \cup R_{J'})) \ ,\]
 and consider two cases:
 \begin{enumerate}
\item[(i)] $\gamma \in R^+ \backslash (R_{I} \cup R_{J})$.

\np
In this case there is a path $[\gamma; \kappa_1, \dots, \kappa_n; \tau]$ with $\kappa_i \in S = I\cup J$.
Since $\gamma \not \in R_I \cup R_J$, we see that $\gamma + \kappa_1 + \dots + \kappa_i \not \in R_I \cup R_J$
for every $1 \leq i \leq n$. Moreover, for $1 \leq i \leq n$,
\[
\gamma + \kappa_1 + \dots + \kappa_{i-1} \, \sim_\Phi \gamma + \kappa_1 + \dots + \kappa_{i-1} + \kappa_{i}
\]
because $\Phi$ is inflated both from $I$ and $J$ and $S = I \cup J$. Using that $\sim_\Phi$ is an equivalence
relation, we conclude that $\gamma \sim_\Phi \tau$.

\item[] 

\item[(ii)] $\gamma \in (R_{I} \cup R_{J}) \backslash (R_{I'} \cup R_{J'})$.

\np
Without loss of generality we may assume that $\gamma \in R_I \backslash R_{I'}$.
Then $\supp \gamma$ is contained in $I$ but not in $I'$. 
The definition of $I'$ implies that there are $\beta \in J$ and $\theta \in \supp \gamma$ such
that $\langle \beta, \theta \rangle \neq 0$. Hence $\langle \beta, \theta \rangle < 0$ and, 
moreover, $\langle \gamma, \beta \rangle < 0$. Thus $\gamma + \beta \in R^+$ and since 
$\pi_J(\gamma) = \pi_J(\gamma + \beta) \neq 0$, we conclude that 
$\gamma \sim_\Phi \gamma + \beta$. Noting that $\gamma + \beta \not \in R_I \cup R_J$
and using case (i), we conclude that $\gamma + \beta \sim_\Phi \tau$. Combining the two
equivalences, we arrive at $\gamma \sim_\Phi \tau$ which completes the proof. \qedhere
 \end{enumerate}
 \end{proof}

 \begin{proof}[Proof of Theorem~\ref{theorem: canonical inflation}]
     {\it Existence.} 
If $\Phi$ is primitive, $\Phi = \emptyset$, or $\Phi = R^+$, then
we observe that $\Phi = \inf_\emptyset^S(\Phi, \emptyset)$ and we are done.
If $\Phi$ is not primitive, then $\Phi = \inf_K^S(\Xi, Y)$ for some proper non-empty subset
$K$ of $S$. 

\np 
If $\Xi$ is primitive, then $\Phi = \inf_K^S(\Xi, Y)$ is the required decomposition.
If $\Xi = \emptyset$ or $\Xi = (R/K)^+$, then the required decomposition is 
$\Phi = \inf_I^S(\emptyset, Y)$ with $I := \supp \Phi$ or $\Phi = \inf_I^S((R/I)^+, Y)$ with $I := \supp \Phi^c$, respectively.

\np 
If $\Xi$ is not primitive, Proposition~\ref{prop: inflation of an inflation is an inflation 1} 
implies that  $\Phi$ is an inflation from a proper subset $K'$ of $S$ which contains $K$ as a proper subset.
This process is finite since, for every $L \subseteq S$ with $\#L = \rk R - 1$,
every subset of $(R/L)^+$ is either primitive, the empty set, or $(R/L)^+$ itself, proving the
existence part of the statement.

\np 
{\it Uniqueness.} If $\Phi = \inf_I^S(\emptyset, X)$ for some $X$ and $I \subsetneq S$,
then 
$\supp \Phi \subseteq I$. This in contrast to the case when $\Phi$ is an inflation
of $(R/J)^+$ for some proper $J \subsetneq S$, where $\supp \Phi = S$. This proves 
that $\Phi$ cannot be an inflation of both $\emptyset$ and $(R/K)^+$ for any $K \subsetneq S$.

\np 
Now assume that $\Phi = \inf_I^S(\Psi, X) = \inf_J^S(\Theta, Y)$, where $\Psi$ is primitive
and $\Theta$ is either primitive or equals $\emptyset$ or $(R/J)^+$. 
By Proposition~\ref{prop: Gen(Phi) closed under union and intersection}, we have that 
$\Phi = \inf_{I \cup J}^S(\Gamma, Z)$ for some $\Gamma$ and $Z$. Now, 
since $\Phi = \inf_I^S(\Psi, X)=\inf_{I \cup J}^S(\Gamma, Z)$, by 
Proposition~\ref{prop: inflation of an inflation is an inflation part 2}, it 
follows that $\Psi = \inf_{I\cup J/I}^{S/I}(\Gamma, T)$ for some $T$. Since $\Psi$ is primitive, 
we have $I \cup J = I$ or $I \cup J = S$. In the first case, uniqueness follows from 
Proposition~\ref{prop: inflation of an inflation is an inflation part 2}. Hence, we may assume that $I \cup J = S$.
Moreover, if $I\cap J \neq \emptyset$, then 
$\Phi = \inf_{I \cap J}^S(\Xi, Z)$, implying that $\Xi$ is an inflation of $\Psi$ and $\Theta$ which
proves the uniqueness by induction on $\rk R$. 

\np 
It remains to deal with the case $I\cap J = \emptyset$, $I \cup J = S$. Lemma~\ref{lemma: inflation from I' cup J'} implies
that $\Phi$ is an inflation from $I' \cup J'$, as defined in Lemma~\ref{lemma: inflation from I' cup J'}. 
If $J'$ is not empty, then $\Phi$ is
also an inflation from $I \cup J'$, contradicting the assumption that $\Psi$ is primitive.
If both $I'$ and $J'$ are empty, Lemma~\ref{lemma: inflation from I' cup J'} implies that $\Phi = \emptyset$
or $\Phi = R^+$, again contradicting the assumption that $\Psi$ is primitive.

\np 
Finally we consider the case when $I' \neq \emptyset$, $J' = \emptyset$, $\Theta = \emptyset$ 
(the case $\Theta = (R/J)^+$ follows because of Remark~\ref{rem: phi primitive iff phi complement primitive}). 
Lemma~\ref{lemma: inflation from I' cup J'}
implies that $\Phi$ is an inflation of the empty set from $I'$ and $J$. 
In particular, $\supp \Phi \subseteq I' \cap J = \emptyset$,
proving that $\Phi = \emptyset$.
 \end{proof}

\begin{remark}
    If $\Phi$ is canonically inflated from $I$, then so is $\Phi^c$.
\end{remark}
\begin{remark}
    If $\Phi$ is primitive then its canonical expression is
    $\Phi = \inf_\emptyset^S(\Phi,\emptyset)$,  c.f.\ Remark~\ref{rem: phi primitive iff phi complement primitive}.
\end{remark}

\subsection{The sets $\supp \Phi$ and $\Gen(\Phi)$ via the canonical form of $\Phi$}
We can now apply the canonical form of $\Phi$ to the problem of computing the sets 
$\supp \Phi$ and $\Gen(\Phi)$ introduced earlier.

\begin{prop} Let $\Phi = \inf_I^S(\Psi, X)$ be the canonical form of $\Phi$.
    \label{prop: support of inflations}
    \begin{enumerate}
    \item[(i)] $\Psi$ is primitive if and only if $\supp \Phi = \supp \Phi^c = S$;
    \item[(ii)] $\Psi = \emptyset$ if and only if $\supp \Phi$ is a proper subset of $S$ and $\supp \Phi^c = S$.
    Respectively, $\Psi = (R/I)^+$ if and only if $\supp \Phi = S$ and $\supp \Phi^c$ is a proper subset of $S$.
    \end{enumerate}
\end{prop}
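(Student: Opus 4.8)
The plan is to prove Proposition~\ref{prop: support of inflations} by relating the support of $\Phi$ (and of $\Phi^c$) to the three mutually exclusive alternatives of Theorem~\ref{theorem: canonical inflation}, using Lemma~\ref{lemma: supp(Phi) defines canonical inflater for inflations of empty set} as the central tool. Recall that in the canonical form $\Phi = \inf_I^S(\Psi, X)$, by the remark preceding this proposition $\Phi^c = \inf_I^S(\Psi^c, X^c)$ is canonically inflated from the \emph{same} $I$, and the alternative realized for $\Phi^c$ is: $\Psi^c$ primitive iff $\Psi$ primitive (Remark~\ref{rem: phi primitive iff phi complement primitive}); $\Psi^c = \emptyset$ iff $\Psi = (R/I)^+$; $\Psi^c = (R/I)^+$ iff $\Psi = \emptyset$. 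So it suffices to understand when $\supp \Phi = S$.

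First I would establish the key equivalence: \emph{$\supp \Phi = S$ if and only if $\Phi$ is not an inflation of $\emptyset$ from any proper subset of $S$.} Indeed, by Lemma~\ref{lemma: supp(Phi) defines canonical inflater for inflations of empty set}, the minimal set from which $\Phi$ is an inflation of $\emptyset$ is exactly $\supp \Phi$; thus $\supp \Phi$ is a proper subset of $S$ precisely when $\Phi = \inf_K^S(\emptyset, X)$ for some proper $K \subseteq S$, and $\supp \Phi = S$ precisely when no such $K$ exists. In the canonical form, $\Psi = \emptyset$ is exactly the statement that $\Phi$ is an inflation of $\emptyset$ from the proper subset $I$ (with $I$ smallest, which by the lemma forces $I = \supp\Phi$). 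Hence $\Psi = \emptyset \iff \supp \Phi \subsetneq S$, and dually (applying the same reasoning to $\Phi^c$) $\Psi = (R/I)^+ \iff \Psi^c = \emptyset \iff \supp \Phi^c \subsetneq S$.

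Now I would assemble the two parts. For (i): $\Psi$ is primitive iff, by Theorem~\ref{theorem: canonical inflation}, neither $\Psi = \emptyset$ nor $\Psi = (R/I)^+$ holds (these are the three mutually exclusive alternatives, and one of them must hold); by the previous paragraph this is exactly $\supp\Phi = S$ and $\supp\Phi^c = S$. For (ii): $\Psi = \emptyset \iff \supp\Phi \subsetneq S$ directly, but we must also record $\supp\Phi^c = S$; this holds because the alternatives are mutually exclusive, so $\Psi = \emptyset$ precludes $\Psi = (R/I)^+$, i.e.\ precludes $\supp\Phi^c \subsetneq S$. Conversely if $\supp\Phi \subsetneq S$ and $\supp\Phi^c = S$, then $\Psi = \emptyset$ by the equivalence and we are in alternative (ii) rather than (i) since $\supp\Phi \ne S$. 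The statement about $\Psi = (R/I)^+$ follows by the symmetric argument applied to $\Phi^c$, or equivalently by swapping the roles of $\Phi$ and $\Phi^c$ throughout.

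The only mild subtlety — and where I would be careful — is the bookkeeping in the case $\Phi = \emptyset$ or $\Phi = R^+$: there the canonical form is $\inf_\emptyset^S(\Phi,\emptyset)$ with $I = \emptyset$, and one must check the support bookkeeping is consistent, e.g.\ when $\Phi = \emptyset$ we have $\supp\Phi = \emptyset \subsetneq S$ and $\supp\Phi^c = \supp R^+ = S$ (using connectedness of $R$ so that the highest root has full support, hence $\supp R^+ = S$), matching alternative (ii) with $\Psi = \emptyset$. I expect this edge-case verification, together with invoking connectedness of $R$ to guarantee $\supp R^+ = S$, to be the main point requiring attention; the rest is a direct translation via Lemma~\ref{lemma: supp(Phi) defines canonical inflater for inflations of empty set} and the mutual exclusivity in Theorem~\ref{theorem: canonical inflation}.
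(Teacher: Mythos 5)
Your proposal is correct and takes essentially the same route as the paper: the paper's proof likewise reduces everything to the equivalence ``$\Psi=\emptyset$ in the canonical form iff $\supp\Phi\subsetneq S$'' (which is exactly the content of Lemma~\ref{lemma: supp(Phi) defines canonical inflater for inflations of empty set}), and then obtains the remaining cases by passing to $\Phi^c$ and invoking the mutual exclusivity of the alternatives in Theorem~\ref{theorem: canonical inflation}. Your write-up is just a more explicit version of the paper's terse argument.
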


\begin{proof}
     If $\Phi = \inf_I^S(\emptyset, X)$, then $\supp \Phi = I$. 
Conversely, if $\supp \Phi = J$ is a proper subset of $S$, then $\Phi \subseteq R_J^+$ and
$\Phi = \inf_J^S (\emptyset, \Phi)$. This proves that $\supp \Phi = S$ unless 
$\Phi = \inf_I^S(\emptyset, X)$.
\end{proof}

\np 
We next describe the set $\Gen(\Phi)$ when $\Psi$ in the canonical form 
$\Phi = \inf_I^S(\Psi, X)$
 is primitive. When $\Psi$ is not primitive, the description of $\Gen(\Phi)$ is more complicated and 
we omit it.  In the case when $\Phi$ is an inversion set, Proposition~\ref{prop: Gen(Phi) from hyperplane} provides a
characterization of all elements of $\Gen(\Phi)$ in terms of the positive system $P \subseteq R$ which determines $\Phi$.

\begin{prop}
    \label{prop: Gen(Phi) for primitive Psi}
    Let $\Phi = \inf_I^S(\Psi, X)$ be the canonical form of $\Phi \subseteq R^+$ where $\Psi$ is primitive. 
    Then $\Gen(\Phi) = \Gen(X) \cup \{ S \}$. 
\end{prop}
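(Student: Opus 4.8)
The plan is to prove the two inclusions $\Gen(\Phi) \supseteq \Gen(X) \cup \{S\}$ and $\Gen(\Phi) \subseteq \Gen(X) \cup \{S\}$ separately, using the canonical form together with the structural results on $\Gen$. For the easy inclusion: $S \in \Gen(\Phi)$ always (Remark after Definition~\ref{def: Gen(Phi)}), and if $K \in \Gen(X)$, say $X = \inf_K^I(\Theta', Y)$ with $K \subseteq I$, then since $\Phi = \inf_I^S(\Psi, X)$, Proposition~\ref{prop: inflation of an inflation is an inflation 1} gives $\Phi = \inf_I^S(\Psi, \inf_K^I(\Theta', Y)) = \inf_K^S(\inf_{I/K}^{S/K}(\Psi, \Theta'), Y)$, so $K \in \Gen(\Phi)$. (Note $K \subseteq I \subseteq S$, so all these symbols make sense; here one must be slightly careful that $X \subseteq R_I^+$, so $\Gen(X)$ is being computed inside the QRS $R_I$, and every element of $\Gen(X)$ is a subset of $I$.)

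For the hard inclusion, take $K \in \Gen(\Phi)$; I want to show either $K = S$ or $K \in \Gen(X)$. By Proposition~\ref{prop: Gen(Phi) closed under union and intersection}, $I \cup K \in \Gen(\Phi)$, so $\Phi = \inf_{I \cup K}^S(\Gamma, Z)$ for some $\Gamma, Z$. Since also $\Phi = \inf_I^S(\Psi, X)$ and $I \subseteq I \cup K$, Proposition~\ref{prop: inflation of an inflation is an inflation part 2} gives $\Psi = \inf_{(I \cup K)/I}^{S/I}(\Gamma, T)$ for some $T$. As $\Psi$ is primitive, this forces $(I \cup K)/I = \emptyset$ or $(I \cup K)/I = S/I$, i.e.\ $I \cup K = I$ or $I \cup K = S$. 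In the first case $K \subseteq I$. In the second case, I also use Proposition~\ref{prop: Gen(Phi) closed under union and intersection} on $I \cap K \in \Gen(\Phi)$; but this alone is not enough — I need to actually land in the case $K = S$. The cleanest route is: having reduced to $K \subseteq I$ (the case $I \cup K = S$ still needs handling — see below), I want to conclude $K \in \Gen(X)$. When $K \subseteq I$, write $\Phi = \inf_K^S(\Lambda, Y)$. Applying Proposition~\ref{prop: inflation of an inflation is an inflation part 2} with $K \subseteq I$ to $\Phi = \inf_K^S(\Lambda, Y) = \inf_I^S(\Psi, X)$ — wait, that proposition is stated for $I \subseteq J$, so here the roles are $K \subseteq I$ and it yields $\Lambda = \inf_{I/K}^{S/K}(\Psi, Z')$ for some $Z'$, and then by the uniqueness part of the inflation bookkeeping (Remark~(ii) after Definition~\ref{def: equivalent in Phi}) one extracts $Y = \inf_K^I(Z', X)$, i.e.\ $X$ is inflated from $K$ inside $R_I$, so $K \in \Gen(X)$.

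It remains to rule out — or rather, to dispatch — the case $I \cup K = S$ with $K \neq S$. Here $I$ and $K$ together cover $S$ but neither is all of $S$. I expect this to be the main obstacle, and the tool for it is Lemma~\ref{lemma: inflation from I' cup J'} together with the fact that $\Phi$ is canonically inflated from $I$ (so $\Psi$ primitive). If $I \cap K = \emptyset$, Lemma~\ref{lemma: inflation from I' cup J'} says $\Phi$ is inflated from $I' \cup K'$ with $\Psi$-part empty or full; combined with primitivity of $\Psi$ this should force $I' = \emptyset$ (else $\Phi$ inflated from $I \cup K' \supsetneq I$ would contradict primitivity of $\Psi$, unless $I \cup K' = S$, in which case $\Phi = \emptyset$ or $R^+$, also contradicting primitivity of $\Psi$), and symmetrically; one then deduces $\Phi = \emptyset$ or $R^+$, contradiction — so this subcase is vacuous. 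If $I \cap K \neq \emptyset$, then $I \cap K \in \Gen(\Phi)$ is a proper subset and the argument of the previous paragraph applies to $I \cap K$ in place of $K$ after shrinking, so by induction on $\rk R$ (or on $\#(S \setminus I)$) one reduces to the already-handled situation, concluding $K \in \Gen(X)$ or $K = S$. Assembling the two inclusions yields $\Gen(\Phi) = \Gen(X) \cup \{S\}$. \epf
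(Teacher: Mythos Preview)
Your approach is structurally the same as the paper's: establish the easy inclusion $\Gen(X)\cup\{S\}\subseteq\Gen(\Phi)$ directly, and for the reverse inclusion show that any $K\in\Gen(\Phi)\setminus\{S\}$ satisfies $K\subseteq I$ (whence $K\in\Gen(X)$ by the inflation bookkeeping you indicate). The easy inclusion and the argument for the case $K\subseteq I$ are correct, modulo a harmless transposition of letters (you want $X=\inf_K^I(Z',Y)$, not $Y=\inf_K^I(Z',X)$).

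The gap is in your handling of the case $I\cup K=S$, $K\neq S$. In the subcase $I\cap K=\emptyset$ your idea is right, but the write-up conflates $I'$ and $K'$: what you actually need is that $K'\neq\emptyset$ would give $I\cup K'\in\Gen(\Phi)$ (by closure of $\Gen(\Phi)$ under union, applied to $I$ and $I'\cup K'$), hence $\Psi$ is a nontrivial inflation unless $I\cup K'=S$, i.e.\ $K'=K$; connectivity of $S$ then rules out $K'=K$. The real problem is the subcase $I\cap K\neq\emptyset$: the sentence ``the argument of the previous paragraph applies to $I\cap K$ in place of $K$ after shrinking, so by induction on $\rk R$ \ldots'' does not constitute an argument. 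Knowing $I\cap K\in\Gen(X)$ says nothing about $K$ itself, and no inductive statement is set up. What actually works --- and is what the paper does --- is to pass to the quotient $R/(I\cap K)$: there $\Gamma:=\pi_{I\cap K}(\Phi)$ has canonical form $\inf_{I/(I\cap K)}^{S/(I\cap K)}(\Psi,\cdot)$ with the same primitive $\Psi$, the images of $I$ and $K$ are disjoint with union $S/(I\cap K)$, and Lemma~\ref{lemma: inflation from I' cup J'} applied once in this quotient yields the contradiction. No induction is needed; a single quotient step unifies both subcases. The paper packages this as ``$\Gen(\Phi)\setminus\{S\}$ has a unique maximal element, and that element is $I$'', which is exactly the content of your case analysis once the $I\cup K=S$ case is properly dispatched.
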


\begin{proof}
    First we prove that $\Gen(\Phi) \backslash \{S\}$ contains a unique maximal element.
Assume, by way of contradiction,
that $\Gen(\Phi) \backslash \{S\}$ contains two distinct maximal elements $K$ and $L$.
Then $K \cap L$ and $K \cup L$ are both elements
of $\Gen(\Phi)$ by Proposition~\ref{prop: Gen(Phi) closed under union and intersection} and 
$K \cup L = S$ by the maximality of $K$ and $L$.  Write
\[\Phi=\inf_K^S(\Psi',X') = \inf_L^S(\Psi'',X'') = \inf_{K\cap L}^S(\Gamma,Y) \ .\]  
We define $K'' := K/(K \cap L)$ and $L'' := L/(K \cap L)$ and $S'' := S/(K \cap L)$.  Then 
$K'' \cap L'' = \emptyset$ and $K'' \cup L'' = S''$.
By Proposition~\ref{prop: inflation of an inflation is an inflation part 2}, there exist $\theta',\theta'',Z',Z''$ such that 
\[\Gamma = \inf_{K''}^{S''}(\theta',Z') = \inf_{L''}^{S''}(\theta'',Z'') \ . \] 
Applying Lemma~\ref{lemma: inflation from I' cup J'},  we define
$K' := K'' \cap (L'')^\perp \subseteq S''$ and
$L' := L'' \cap (K'')^\perp \subseteq S''$ to get
$\Gamma = \inf_{K'\cup L'}^{S''}(\Xi,W)$ 
where $\Xi = \emptyset$ or $\Xi= R/(K'\cup L')$.

\np
Note that $K' \cup L' \neq S''$.  To see this, recall that $S''$ is irreducible and thus 
there exists $\alpha \in K''$ and $\beta \in L''$ with
$\alpha$ and $\beta$ not perpendicular.  Thus 
both $\alpha$ and $\beta$ lie in
$S'' \setminus (K'\cup L')$.

\np
Therefore 
\[ \Phi=\inf_{K\cap L}^S(\Gamma,Y) = 
\inf_{K\cap L}^S(\inf_{K'\cup L'}^{S''}(\Xi,W),Y)
= \inf_{K'\cup L'}^S(\Xi,U) \] 
for some $U$ where
$\Xi = \emptyset$ or $\Xi= R/(K'\cup L')$.  But this contradicts the hypothesis that
$\Phi = \inf_I^S(\Psi,X)$ is the canonical form of $\Phi$ where $\Psi$ is primitive.  
This contradiction shows that $\Gen(\Phi) \backslash \{S\}$ has a unique maximal element.  

\np
Next we note that this maximal element is $I$.  Suppose, to the contrary, that 
$\Phi = \inf_I^S(\Psi,X) = \inf_K^S(\Psi',X')$
where $I \subsetneq K \subsetneq S$.  
Then by Proposition~\ref{prop: inflation of an inflation is an inflation part 2}, we have
$\Psi = \inf_{K/I}^{S/I}(\Psi',Z)$ for some
$Z \subseteq \text{span}(R/I)$.  This contradicts the
assumption that $\Psi$ is primitive, proving that $I$ is the unique maximal element of $\Gen(\Phi) \backslash \{S\}$.

\np 
To complete the proof that $\Gen(\Phi) = \Gen(X) \cup \{S\}$, assume that $J \in \Gen(\Phi) \backslash \{S\}$.
Then $J \subseteq I$ and 
\[\Phi = \inf_I^S(\Psi, X) = \inf_J^S(K, Y) \ .\]
By Proposition~\ref{prop: inflation of an inflation is an inflation part 2}, $K = \inf_{I/J}^{S/J}(\Theta, Z)$ and
\[\Phi =  \inf_J^S(K, Y) = \inf_J^S(\inf_{I/J}^{S/J}(\Theta, Z),Y) 
= \inf_I(\Theta, \inf_I^J(Z, Y)) \ .\]
Comparing the above with $\Phi = \inf_I^S(\Psi,X)$, we conclude that $\Theta = \Psi$ and $X = \inf_J^I(Z,Y)$,
proving that $J \in \Gen (X)$. This shows that $\Gen(\Phi) \subseteq \Gen(X) \cup \{S\}$. The converse
inclusion is obvious. 
\end{proof}

\subsection{Inflations and inversion sets}

\begin{prop} \label{prop: inflations and inversions}
     Let $\Phi = \inf_I^S(\Psi, X)$. Then $\Phi$ is an inversion set if and only if 
    both $\Psi$ and $X$ are inversion sets.
\end{prop}

\begin{proof}
    Assume that $\Psi, X$ are inversion sets.
    To prove that $\Phi$ is closed, 
    take roots $\alpha, \beta \in \Phi$ with
   $\alpha + \beta \in R$ and show that
   $\alpha + \beta \in \Phi$. If $\alpha, \beta \in X$, 
    then $\alpha + \beta \in X \subseteq \Phi$ by closure of $X$. Assume that $\alpha \in X$ 
    and $\pi_I(\beta) \in \Psi$. Then $\pi_I(\alpha + \beta) = \pi_I(\beta) \in \Psi$. 
    Hence, $\alpha + \beta \in \Phi$. Finally, if 
    both $\pi_I(\alpha), \pi_I(\beta) \in \Psi$, then  closure of $\Psi$ 
    implies that $\pi_I(\alpha + \beta) = \pi_I(\alpha) + \pi_I(\beta) \in \Psi$. 
    Hence, $\alpha + \beta \in \Phi$. This proves that $\Phi$ is closed. The fact that $\Phi$ is co-closed follows from
    the identity
    $\Phi^c = \inf_I^S(\Psi^c, X^c)$ and the proof above.

\np 
    Conversely, assume that $\Phi$ is an inversion set. We first prove that $X$ is also. The 
    closure property of $X$ follows from the closure property of $\Phi$ and that 
    $X = \Phi \cap {\rm span}(I)$. For the co-closure, assume that $\alpha, \beta \in X^c$ 
    and $\alpha + \beta$ is a root. If $\alpha, \beta \in \Phi^c$, then the co-closure of $\Phi$ 
    implies that $\alpha + \beta \in X^c$. But if $\alpha \in \pi_I^{-1}(\Psi)$, 
    then $\alpha + \beta \not \in {\rm span}(I)$ so $\alpha + \beta \in X^c$. Hence, 
    $X$ is also co-closed and thus an inversion set. It remains to prove that $\Psi$ is an 
    inversion set. For the closure, let $\bar \alpha, \bar \beta \in \Psi$ with 
    $\bar \gamma := \bar \alpha + \bar \beta$ a root.  It follows from 
    Proposition~\ref{prop: lifts of sums in quotients} that for a given 
    $\gamma \in \pi_I^{-1}(\bar\gamma)$, there  are $\alpha \in \pi_I^{-1}(\bar\alpha)$ and
$\beta \in \pi_I^{-1}(\bar\beta)$ with $\gamma = \alpha + \beta$. If $\bar \gamma \not\in \Psi$, then 
$\gamma \in \Phi^c$. As $\bar \alpha, \bar \beta \in \Psi$, we have that $\alpha, \beta \in \Phi$. 
The closure property of $\Phi$ implies that $\gamma \in \Phi$, which is a contradiction.  Similarly, 
for the co-closure, if $\bar \alpha, \bar \beta \in \Psi^c$ with $\bar \alpha + \bar \beta$ a root 
in $\Psi$, then we get an expression $\gamma = \alpha + \beta$ of three roots with 
$\alpha, \beta \in \Phi^c$ and $\gamma \in \Phi$, which contradicts the co-closure of $\Phi$. 
Thus, $\Psi$ is an inversion set.
\end{proof}

\np 
We complete this section with a description of $\Gen(\Phi)$ different from the one in Proposition~\ref{prop: Gen(Phi) for primitive Psi} 
in the case when $\Phi$ is an inversion set. Before stating the result, we introduce a notion related to a positive system $P$ in $R$,
see Remark~\ref{rem: inversion Sets lie between two hyperplanes}. Let $S'$
be the base of $P$.
A subspace $W \subseteq E$ is {\it a $P$-Levi subspace} if it is spanned by a subset of $S'$.

\begin{proposition} \label{prop: Gen(Phi) from hyperplane}
Let $P$ be a positive system in $R$ and let $\Phi := R^+ \cap P$ be the corresponding inversion set.
Then $I \in \Gen(\Phi)$ if and only if $W:= \Span I$ is a $P$-Levi subspace of $E$.
\end{proposition}

\begin{proof}
Assume first that $W$ is a $P$-Levi subspace of $E$. To prove that $I \in \Gen(\Phi)$, we need to show that,
for any $\beta, \gamma \in R^+ \backslash W$ such that $\beta - \gamma \in W$, either both $\beta$ and $\gamma$
belong to $\Phi$ or both belong to $\Phi^c$. Since $\Phi = R^+ \cap P$, this is equivalent to saying that
$\beta$ and $\gamma$ are on the same side of a hyperplane defining $P$. 
Consider the hyperplane $\mathcal{H}_\vep := \ker \varphi_\vep$, 
where $\varphi_\vep:E \to \RR$ is defined by its values on the simple roots of $P$ as follows:
$\varphi_\vep(\theta) = \vep$ for $\theta \in W$
and $\varphi_\vep(\theta) = 1$ if $\theta \not \in W$. The hyperplane $\mathcal{H}_\vep$ determines $P$.
Since $W$ is $P$-Levi, there is $c >0$ such that $\varphi_\vep(\alpha) \leq c \vep$ if $\alpha \in R \cap W$ and
$\varphi_\vep(\alpha) \geq 1$ for $\alpha \in P \backslash W$. Thus,
for small enough positive values of $\vep$, 
any two roots $\beta, \gamma \in R \backslash W$ with $\beta - \gamma \in W$
lie on the same side of $\mathcal{H}_\vep$. This completes the argument that $I \in \Gen(\Phi)$.

\np 
Next we assume that $I \in \Gen(\Phi)$ but $W$ is not a $P$-Levi subspace of $E$. Label the 
roots in the base $S'$ of $P$ as $\theta_1', \dots, \theta_n'$ so that $\theta_1', \dots, \theta_k' \in W$
and $\theta_{k+1}', \dots, \theta_n' \not \in W$. The assumption that $W$ is not a $P$-Levi subspace of $E$
implies that $\theta_1', \dots, \theta_k'$ do not span $W$. Then there is $w' \in P \cap W$ such that 
$w' \not \in \Span \{\theta_1', \dots, \theta_k'\}$. (For example we can take $w' = \pm \theta$ where $\theta$
is a simple root of $R^+$ not contained in $\Span \{\theta_1', \dots, \theta_k'\}$.) Climbing down from $w'$ towards $0$
with steps among $\theta_1', \dots, \theta_k'$, we can replace $w'$ with $w$ which satisfies the following conditions:
$w \in W \cap P$ and $w = \beta + \gamma$, where $\beta, \gamma \in P \backslash W$.

\np
Assume further that $w \in R^+$. Then one of $\beta$ and $\gamma$ is in $R^+$ and the other one is in $R^-$. Indeed,
they cannot both be in $R^-$ because $w \in R^+$. If both $\beta$ and $\gamma$ are in $R^+$, since $\beta + \gamma \in W = \Span I$,
we would conclude that $\beta, \gamma \in W$ which contradict the way they were chosen. Say, $\beta \in R^+$ and $\gamma \in R^-$.
Then $\beta, -\gamma \in R^+\backslash W$, $\beta \in \Phi$, and $\beta - (-\gamma) = w \in W$ imply that $-\gamma \in \Phi$.
The last then implies that $- \gamma \in P$ which contradicts the assumption that $\gamma \in P$. This completes the proof 
in the case when $w \in R^+$. The case when $w \in R^-$ is dealt in the same way.
\end{proof}

\section{\texorpdfstring{The graph $\GPhi$}{The graph of G(Phi)}} \label{sec: graphs}

\np 
For the rest of the paper, the symbol $\Phi$ will denote an inversion set of $R$, and in particular, a 
subset of $R^+$. We recall that by $\Phi^c$ we mean the complement of $\Phi$ in $R^+$.

\np 

While Section~\ref{sec: paths} deals with the existence and properties of arbitrary paths between roots in $R$, 
when discussing an inversion set $\Phi$, it is often useful to focus on paths
whose partial sums belong to $\Phi$ and steps belong to $\pm \Phi^c$. To that end, 
we introduce the graph $\GPhi$ and study a partial order on its connected components,
which resembles the partial order on $R^+$. 
In fact, if $\Phi = R^+$, then each component of $\GPhi$ consists of a single 
element of $\Phi$ and thus $\Comp$ is identified with $R^+$ and the partial orders on these 
two sets are the same.

\subsection{Definition of $\GPhi$}
\begin{definition}
Let $\Phi \subseteq R^+$ be an inversion set. We define $\GPhi$ to be the graph whose vertices are the elements of $\Phi$, 
where two vertices $\alpha$ and $\alpha'$ are connected by an edge if $\alpha - \alpha' \in \pm \Phi^c$.

\np 
Note that in this definition we allow for $R$ to be reducible. It is not difficult to see that if 
$R = R^1 \times \dots \times R^s$
is a reducible QRS 
and $\Phi \subseteq R^+$, then $\Phi = \Phi^1 \sqcup \dots \sqcup \Phi^s$, where $\Phi^i := \Phi \cap (R^i)^+$
and $\GPhi$ is the union of $G(\Phi^1), \dots, G(\Phi^s)$ in the sense that its sets of vertices and edges are the 
(disjoint) unions of the respective sets of vertices and edges of $G(\Phi^1), \dots, G(\Phi^s)$.

\np 
We will be mainly interested in the connected components of the graph $\GPhi$ as opposed to 
the graph itself.  We denote by $\Comp$ the set of connected components
of $\GPhi$.  In the case when $\Phi = R^+$, the components are the singleton sets, each containing 
a single positive root.
\end{definition}

\begin{prop} \label{prop: decompositions don't split components} Let $\Phi$ be an inversion set and let $C \in \Comp$.
 If $\Phi = \Phi_1 \sqcup \Phi_2$, then $C \subseteq \Phi_1$ or $C \subseteq \Phi_2$.
    In particular, if $\Phi = \inf_I^S(\Psi, X)$, then $C \subseteq \inf_I^S(\Psi, \emptyset)$ or $C \subseteq X$.
\end{prop}

\begin{proof}
    Suppose to the contrary that neither $C \cap \Phi_1$ nor $C \cap \Phi_2$ is empty. By definition 
    we would have $\alpha + \kappa = \beta$ for some $\alpha \in \Phi_1, \beta \in \Phi_2,$ 
    and $\kappa \in \pm \Phi^c$. By symmetry, we may assume that $\kappa \in R^+$. 
    Then $\alpha, \kappa \in \Phi_2^c$, which
    contradicts the co-closure of $\Phi_2$.
\end{proof}

\np 
  When we have expressed an inversion set $\Phi=\inf_I^S(\Psi,X)$ as an inflation, 
  we usually will denote components contained in $X$ by $X_1, X_2, \dots$ and 
   components outside of $X$ by
  $Z_1, Z_2, \dots$.

\begin{example}\label{ex: component examples} 
We give three examples of inversion sets and their respective sets of components. 
As we introduce more structure on the set $\Comp$, we will revisit them. 
For each inversion set we describe its canonical inflation $\Phi=\inf_I^S(\Psi,X)$.

\begin{enumerate} 
\item[(i)] 
Here is an inversion set $\Phi$ in $\mathbb{E}_6$ which is $\Phi=\inf_I^S(\Psi,X)$ where\\
$I = \{\theta_2,\theta_4,\theta_5,\theta_6\}$,
$\Psi = \mathbb{C}_2^+ =\{10,01,11,12\}$ 
(which is not primitive) and\\
$X=\left\{\Esix000100,\Esix000001,
\Esix010100,\Esix000111\right\}$.\\
One can check that this choice of $I$ is minimal and thus, by
Theorem~\ref{theorem: canonical inflation}, the inflation given is canonical.
The graph $\GPhi$ has five components of which the
first four are outside $X$.  Here 
\begingroup
\setlength\arraycolsep{3pt}
$$    \Phi = \left\{ \begin{array}{cccccccccc}
\Esix100000,
&\Esix001000,
&\Esix000100,
&\Esix000001,
&\Esix101000,
&\Esix010100,
&\Esix001100,
&\Esix101100,
&\Esix011100,
&\Esix001110,\\
\Esix000111,
&\Esix111100,
&\Esix101110,
&\Esix011110,
&\Esix001111,
&\Esix111110,
&\Esix101111,
&\Esix011210,
&\Esix011111,
&\Esix111210,\\
\Esix111111,
&\Esix011211,
&\Esix112210,
&\Esix111211,
&\Esix011221,
&\Esix112211,
&\Esix111221,
&\Esix112221,
&\Esix112321,
&\Esix122321
 \end{array}
\right\}\ .$$
\endgroup

\np 
The first four components are: \\
$Z_1=\left\{
\Esix011211,
\Esix001000,
\Esix011110,
\Esix011221,
\Esix001111,
\Esix001100,
\Esix011210,
\Esix011111,
\Esix011100,
\Esix001110\right\}$,\\
$Z_2=\left\{\Esix100000\right\}$,\\
$Z_3 = \left\{
\Esix101111,
\Esix111210,
\Esix111111,
\Esix101000,
\Esix111211,
\Esix101100,
\Esix111221,
\Esix111100,
\Esix101110,
\Esix111110\right\}$, and\\
$Z_4 = \left\{\Esix112210,
\Esix112211,
\Esix112221,
\Esix112321,
\Esix122321\right\}$\ .\\
The fifth component is $X_1=X$. 

\item[] 
 
\item[(ii)] 
Here is an inversion set $\Phi$ in $\mathbb{E}_6$ which is   $\Phi=\inf_I^S(\Psi,X)$ where\\
$I = \{\theta_1,\theta_3,\theta_4,\theta_5\}$,
$\Psi=\mathbb{B}_2^+=\{10,01,11,21\}$
(which is not primitive) and\\
$X=\Big\{
\Esix000100,
\Esix000010,
\Esix000110\Big\}$.\\
As in the previous case, one can check that the chosen $I$ is minimal and thus this 
expression of $\Phi$ as an inflation is the canonical one.
Here $\Comp$ has seven elements of which the
first four are outside $X$.  Here 
\begingroup
\setlength\arraycolsep{3pt}
$$    \Phi = \left\{ \begin{array}{cccccccccc}
\Esix010000,
&\Esix000100,
&\Esix000010,
&\Esix000001,
&\Esix010100,
&\Esix000110,
&\Esix000011,
&\Esix011100,
&\Esix010110,
&\Esix000111,\\
\Esix111100,
&\Esix011110,
&\Esix010111,
&\Esix001111,
&\Esix111110,
&\Esix101111,
&\Esix011210,
&\Esix011111,
&\Esix111210,
&\Esix111111,\\
\Esix011211,
&\Esix112210,
&\Esix111211,
&\Esix011221,
&\Esix112211,
&\Esix111221,
&\Esix112221,
&\Esix112321,
&\Esix122321
 \end{array}
\right\}\ .$$
\endgroup

\np 
The first four components are:\\
$Z_1=\left\{\Esix011210,
\Esix010000,
\Esix111210,
\Esix010100,
\Esix112210,\\
\Esix011100,
\Esix010110,
\Esix111100,
\Esix011110,
\Esix111110\right\}$,\\
$Z_2=\left\{\Esix001111,
\Esix000001,
\Esix101111,
\Esix000011,
\Esix000111\right\}$,\\
$Z_3=\left\{\Esix111211,
\Esix010111,
\Esix011221,
\Esix112211,
\Esix111221,
\Esix112221,
\Esix112321,
\Esix011111,
\Esix111111,
\Esix011211\right\}$, and\\
$Z_4=\left\{\Esix122321\right\}$.

\np 
The last three components are\\
$X_1 =\left\{\Esix000100\right\}$,
$X_2=\left\{\Esix000010\right\}$,
$X_3=\left\{\Esix000110\right\}$
where
$X=X_1 \sqcup X_2 \sqcup X_3$.

\item[] 

\item[(iii)] 
Here is an inversion set $\Phi$ in $\mathbb{B}_5$ which is   $\Phi=\inf_I^S(\Psi,X)$ where 
$I = \{\theta_4,\theta_5\}$,
$\Psi=\{ 100, 001, 111, 012, 112 \}$,
which is primitive, and $X=\{ 00010, 00001, 00011, 00012 \}$.\\
The graph $\GPhi$ has six components of which the
first two are outside $X$.  Here 
\begingroup
\setlength\arraycolsep{3pt}
$$ \Phi = \left\{ \begin{array}{ccccccccc}
 10000,
& 00100,
& 00010,
& 00001,
& 00110,
& 00011,
& 11100,
& 00111,
& 00012,\\
  11110,
& 00112,
& 11111,
& 00122,
& 11112,
& 11122,
& 01222,
& 11222
 \end{array}
\right\}.$$
\endgroup

\np 
The first two components are:\\
$Z_1=\left\{
10000,
00100,
00110,
11100,
00111,
11110,\\
00112,
11111,
00122,
11112,
11122,
01222
\right\}$, and\\
$Z_2=\{11222\}$.

\np 
The last four components are\\
$X_1=\{00001 \}$,
$X_2=\{00010\}$
$X_3=\{00011 \}$,
$X_4=\{00012\}$
where
$X=X_1 \sqcup X_2 \sqcup X_3 \sqcup X_4$. 
 \end{enumerate}
\end{example}

\subsection{Partial order of components.}

Proposition~\ref{prop: decompositions don't split components} suggests that when studying 
decompositions of an inversion set $\Phi$, the relevant objects to examine are not arbitrary 
subsets of $\Phi$, but rather collections of components of $\GPhi$. For that reason, 
understanding the structure of the set $\Comp$ is the focus of the remainder of this and the following section.

\np 
We begin by defining a partial order on the set $\Comp$. First, we need the 
following lemma, which will also be useful for defining component addition in Section~\ref{sec: addition}.

\begin{lemma}
\label{lemma: If alpha + beta in C, alpha' + beta' in C}
    Let $\Phi \subseteq R^+$ and $A, B, C\in \Comp$ with $\alpha \in A, \beta \in B$, and $\alpha + \beta \in C$.

    \begin{enumerate}[(i)]
        \item If $A \neq C$, then for all $\alpha' \in A$ there exists $\beta' \in B$ such that $\alpha' + \beta' \in C$.
        \item If $A\neq C$ and $B \neq C$, then for all $\gamma' \in C$, 
        there exist $\alpha' \in A$ and  $\beta' \in B$ such that $\gamma' = \alpha' + \beta'$.
    \end{enumerate}
\end{lemma}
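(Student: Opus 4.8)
The plan is to exploit the path machinery from Section 2, in particular Proposition~\ref{prop: existence of paths} and Proposition~\ref{prop: any rearrangement of a reduced path is a reduced path}, to move the fixed witnesses $\alpha, \beta$ to the desired targets while keeping all intermediate sums inside the correct components. The governing idea is that within the graph $\GPhi$ two roots in the same component are joined by a path whose steps lie in $\pm\Phi^c$, and one would like to transport such a step-sequence across the sum $\alpha+\beta$.

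For part (i): fix $\alpha' \in A$. Since $A$ is a component of $\GPhi$, there is a path $[\alpha; \kappa_1, \ldots, \kappa_n; \alpha']$ with each $\kappa_i \in \pm\Phi^c$ and each partial sum a root in $A$. I would like to add these same steps $\kappa_i$ to $\alpha+\beta$; the obstacle is that a partial sum $\alpha+\kappa_1+\cdots+\kappa_j+\beta$ need not be a root even though both $\alpha+\kappa_1+\cdots+\kappa_j$ and $\beta$ are. Here is where Lemma~\ref{lemma: 2 of 3 rule} and Proposition~\ref{prop: things work when sum of kappas is a root} enter: one proceeds by induction on $n$, and at each stage, writing $\gamma_j := \alpha+\kappa_1+\cdots+\kappa_j \in A$, one has $\gamma_j, \beta, \gamma_j+\beta-\alpha'$... rather, the clean approach is to use Proposition~\ref{prop: existence of paths} applied to the equation $\gamma:=(\alpha+\beta) \longmapsto \gamma + (\text{some reordering of the }\kappa_i)$ landing on $\alpha'+\beta$ — but $\alpha'+\beta$ is exactly what we are trying to show is a root, so this is circular. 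Instead: since $\alpha+\beta\in C$ and $\alpha\in A$ with $A\neq C$, the edge from $\alpha$ to $\alpha+\beta$ in the graph $G_R^{\{\beta\}}$ crosses components, and one shows that $\beta\in\pm\Phi^c$ forces this; then transport along the $A$-path step by step, at each step invoking Lemma~\ref{lemma: 2 of 3 rule} on the triple $(\gamma_{j}, \kappa_{j+1}, \beta)$ — knowing $\gamma_j+\beta$ is a root (inductive hypothesis, and it lies in $C$ since $A\neq C$ and co-closure prevents $\kappa_{j+1}$ from re-entering $C$), $\gamma_{j}+\kappa_{j+1}=\gamma_{j+1}$ is a root, so provided $\beta+\kappa_{j+1}\neq 0$ and $\beta+\kappa_{j+1}\notin R$ we get $\gamma_{j+1}+\beta\in R$; the case $\beta+\kappa_{j+1}\in R$ is handled separately by absorbing the step into $B$ and re-choosing $\beta'$. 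Setting $\beta' := (\alpha'+\beta) - \alpha'$... concretely $\beta'$ is whatever one lands on minus $\alpha'$; one checks $\beta'\in B$ because the net displacement from $\beta$ consists of steps in $\pm\Phi^c$ and stays in a single component of $\GPhi$, namely $B$, using Proposition~\ref{prop: decompositions don't split components}-style reasoning (co-closure) and that $A\neq C$ ensures the sum with $\alpha'$ stays in $C$.

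For part (ii): now $\gamma' \in C$ is given with $A\neq C$ and $B\neq C$. Take a path in $\GPhi$ from $\gamma:=\alpha+\beta$ to $\gamma'$, say $[\gamma;\lambda_1,\ldots,\lambda_m;\gamma']$ with $\lambda_k\in\pm\Phi^c$ and all partial sums in $C$. I want to distribute each step $\lambda_k$ onto either the $\alpha$-side or the $\beta$-side. Writing $\delta_k := \gamma+\lambda_1+\cdots+\lambda_k \in C$, at stage $k$ I have $\delta_k = \alpha_k + \beta_k$ with $\alpha_k\in A$, $\beta_k\in B$ (inductively), and I must write $\delta_{k+1}=\delta_k+\lambda_{k+1}$ as $\alpha_{k+1}+\beta_{k+1}$. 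Apply Lemma~\ref{lemma: lifts of sums in quotients}-type or rather Lemma~\ref{lemma: 2 of 3 rule} to $(\alpha_k,\beta_k,\lambda_{k+1})$: since $\alpha_k+\beta_k+\lambda_{k+1}=\delta_{k+1}\in R$, at least two of $\alpha_k+\beta_k$, $\alpha_k+\lambda_{k+1}$, $\beta_k+\lambda_{k+1}$ are roots (modulo the zero cases), and $\alpha_k+\beta_k=\delta_k$ already is; so either $\alpha_k+\lambda_{k+1}\in R$ — then set $\alpha_{k+1}:=\alpha_k+\lambda_{k+1}$, which lies in $A$ since $\lambda_{k+1}\in\pm\Phi^c$ and $A\neq C$ (co-closure again prevents landing in $C$, and an argument that it cannot land in any third component because it is connected to $\alpha_k$ by a $\Phi^c$-edge) — or $\beta_k+\lambda_{k+1}\in R$, symmetric using $B\neq C$. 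The degenerate cases ($\alpha_k+\lambda_{k+1}=0$ or $\beta_k+\lambda_{k+1}=0$) need the hypothesis $A\neq C$, $B\neq C$ to rule out or else can be rerouted by the freedom of Proposition~\ref{prop: any rearrangement of a reduced path is a reduced path} to reorder the $\lambda_k$.

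The main obstacle I anticipate is precisely the bookkeeping that each newly formed root $\alpha_k+\lambda_{k+1}$ (resp. $\beta_k+\lambda_{k+1}$) genuinely stays in component $A$ (resp. $B$) rather than drifting into $C$ or a fourth component — this is where the hypotheses $A\neq C$ and $B\neq C$ are essential and where co-closure of $\Phi^c$ must be invoked carefully, since a step in $\pm\Phi^c$ added to a root of $A$ produces a root in the same $\GPhi$-component as that root unless it happens to coincide with an element of $C$; the inequality $A\neq C$ together with the fact that $\Phi$ is an inversion set (co-closed) is what forecloses that coincidence. Handling the steps $\lambda_{k+1}$ for which neither $\alpha_k+\lambda_{k+1}$ nor $\beta_k+\lambda_{k+1}$ is a root — forcing, via Lemma~\ref{lemma: 2 of 3 rule}, that this cannot happen when $\delta_{k+1}\neq 0$ and the relevant sums are nonzero — is the other delicate point, resolved by the 2-of-3 rule exactly as in the proof of Proposition~\ref{prop: any rearrangement of a reduced path is a reduced path}.
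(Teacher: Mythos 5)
Your high-level strategy matches the paper's: reduce to a single $\GPhi$-edge (neighbor), write $\alpha' = \alpha - \kappa$ with $\kappa \in \pm\Phi^c$, apply the ``two of three'' observation to $\alpha' + \kappa + \beta = \alpha + \beta$, and split on which of $\alpha'+\beta$ or $\beta+\kappa$ is a root. Part~(ii) is handled the same way in the paper, distributing the step $\kappa$ onto one of the two summands. So the architecture is right.

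The genuine gap is in the step you repeatedly label ``one checks'' or ``bookkeeping'': once you know (say) $\beta+\kappa$ is a root, you must show it lies in $B$. This is not automatic. The element $\beta + \kappa$ is a root, but a priori it could lie in $\pm\Phi^c$ (in which case it is not even a vertex of $\GPhi$), or in $-\Phi$ (not a positive root), and only if neither happens does it lie in $\Phi$ and hence, being $\kappa$-adjacent to $\beta$, in the component $B$. Your proposal gestures at ``co-closure'' and ``the net displacement stays in a single component,'' but that last phrase presupposes the root is in $\Phi$, which is precisely what needs verifying. The paper performs a clean three-way elimination: if $\beta+\kappa \in \pm\Phi^c$ then $(\alpha+\beta)-\alpha' \in \pm\Phi^c$, forcing $C = A$ and contradicting the hypothesis $A \neq C$; if $\beta+\kappa \in -\Phi$ then $-\kappa = \beta + (-\beta - \kappa) \in \Phi$ by closure, contradicting $\kappa \in \pm\Phi^c$; therefore $\beta+\kappa \in \Phi$ and hence in $B$. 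The analogous elimination in part~(ii) uses $B \neq C$ in the first step. This case analysis is where the hypotheses $A \neq C$ and $B \neq C$ actually do their work, and it is the content of the lemma rather than side bookkeeping; leaving it unexecuted leaves the proof incomplete.

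Two smaller remarks. First, in your treatment of (i) the aside about ``the edge from $\alpha$ to $\alpha+\beta$ in the graph $G_R^{\{\beta\}}$ crosses components, and one shows that $\beta\in\pm\Phi^c$'' is mistaken: $\beta$ lies in $B$, a component of $\GPhi$, hence $\beta\in\Phi$, not $\pm\Phi^c$. Second, you needn't invoke the heavier machinery of Propositions~\ref{prop: existence of paths} or \ref{prop: any rearrangement of a reduced path is a reduced path}: since (i) (resp.\ (ii)) is a claim about all $\alpha'\in A$ (resp.\ all $\gamma'\in C$) and $A$ (resp.\ $C$) is connected in $\GPhi$, it suffices to verify the claim for a single neighbor and chain; this is exactly how the paper frames it, and it avoids the reordering worries you raise.
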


\begin{proof}
  First we  prove (i).  It suffices to verify (i) for every neighbour $\alpha' \in A$ of $\alpha$.  Write 
$\alpha' = \alpha - \kappa$ where $\kappa \in \pm \Phi^c$.  
Then $\alpha'+\kappa + \beta = \alpha+\beta$.  
Since the right hand side is a root, either $\alpha' + \beta \in R$ or 
$\beta + \kappa \in R$ by Proposition~\ref{proposition: 2 of 3 rule}. 
If $\alpha' + \beta$ is a root then it lies in $\Phi$ by the closure of 
$\Phi$.  Since $(\alpha' + \beta) - (\alpha+\beta) = -\kappa \in \pm\Phi^c$
we see that $\alpha' + \beta \in C$ verifying (i).
In the second case, suppose that $\beta + \kappa \in R$.  
If $\beta + \kappa \in \pm\Phi^c$ then $(\alpha+\beta) - \alpha' \in \pm\Phi^c$
and thus $C=A$.  
This contradiction shows that $\beta + \kappa \notin \pm\Phi^c$.
If $\beta + \kappa \in -\Phi$ then 
$-\kappa =\beta + (-\beta-\kappa) \in \Phi$.  
This contradiction shows that $\beta + \kappa \notin -\Phi$.
Therefore $\beta + \kappa \in \Phi$ and thus $\beta + \kappa \in B$.
Then $\alpha'+(\beta+\kappa) = \alpha+\beta \in C$ again verifying (i).

\np 
  To prove (ii), it suffices show that (ii) holds for every neighbour 
$\gamma' \in C$ of $\gamma:=\alpha+\beta$.
Write $\gamma' = \gamma + \kappa$ where
$\kappa \in \pm \Phi^c$.  Consider the equation
$\alpha+\beta+\kappa = \gamma'$.  Since the right hand side is a root either
$\alpha+\kappa \in R$ or $\beta+\kappa\in R$ again by Proposition~\ref{proposition: 2 of 3 rule}.  Without loss of generality we
may suppose that $\alpha+\kappa \in R$.
If $\alpha + \kappa \in \pm\Phi^c$ then 
$\gamma' - \beta = \alpha + \kappa \in \pm\Phi^c$
and thus $C=B$.  
This contradiction shows that $\alpha + \kappa \notin \pm\Phi^c$.
If $\alpha + \kappa \in -\Phi$ then 
$-\kappa =\alpha + (-\alpha - \kappa) \in \Phi$.  
This contradiction shows that $\alpha + \kappa \notin -\Phi$.
Therefore $\alpha + \kappa \in \Phi$ and thus $\alpha + \kappa \in A$.
Then $(\alpha + \kappa) + \beta = \gamma'$ as required.
\end{proof}

\begin{remark}
    Note that in Lemma~\ref{lemma: If alpha + beta in C, alpha' + beta' in C}, the assumptions that $A \ne C$ in part (i) and
    $A \ne C$, $B \ne C$ in part (ii) are necessary. For instance, in 
    Example~\ref{ex: component examples}~(i), if we take $A = Z_1, B = X_1, C= Z_1$, 
    then we can find roots $\alpha, \beta$ with $\alpha \in A, \beta \in B$ and $\alpha + \beta \in C$. 
    However, taking $\alpha'$ maximal in $A$, there is no $\beta' \in B$ with $\alpha' + \beta' \in C$.  
    Similarly, in the same example, if we take $\gamma'$ minimal in $C$ then it is not possible 
    to write $\gamma' = \alpha' + \beta'$ with $\alpha' \in A$ and $\beta' \in B$. 
\end{remark}

 \np
Next we introduce a partial order on $\Comp$ inherited from the partial order on $R^+$ from 
Definition~\ref{Order on roots}.

\begin{definition}
    Let $A, B\in \Comp$. We write $A \leq B$ if there exist $\alpha \in A$, $\beta \in B$ 
    with $\alpha \leq \beta$. As usual, $A< B$ means $A \leq B$ and $A \neq B$.
\end{definition}

\begin{prop}\label{prop: partial order equivalences}
        Let $A$ and $B$ be two distinct elements of $\Comp$.
        Then the following conditions are all equivalent:
        \begin{enumerate}[(i)]
            \item $A < B$.
            \item there exist components $A = C_0, C_1, \dots, C_m= B$ and a sequence $\gamma_0,\gamma_1,\dots,\gamma_m$ with 
        $\gamma_i \in C_i$ for all $i$  and $\gamma_{i+1} - \gamma_i \in R^+$.
        \item there exist components 
        $A = D_0 < D_1 <  \dots < D_n= B$ and for every $\alpha \in A$, a sequence 
        $ \alpha = \delta_0,\delta_1,\dots,\delta_n$ with 
        $\delta_i \in D_i$ for all $i$, and 
     $\delta_{i+1} - \delta_i \in \Phi$.
     \item for every $\alpha \in A$ there exists $\beta \in B$ with
     $\alpha < \beta$.
        \end{enumerate}
\end{prop}

\begin{proof} Assume $A < B$, i.e., $A\neq B$ and there exists
$\alpha \in A$ and $\beta \in B$ with $\alpha < \beta$. We will show (ii).
        Then, $\alpha + \sum_{i = 1}^n \theta_i = \beta$ for $\theta_i \in S$. 
        We then form a reduced path $[\alpha; \kappa_1, \kappa_2, \dots, \kappa_m; \beta]$ 
        from $\alpha$ to $\beta$ with $\kappa_i > 0$. By 
        Proposition~\ref{prop: any rearrangement of a reduced path is a reduced path}, 
        we may rearrange this path so that the $\kappa_i$ in $\Phi$ come first and those 
        in $\Phi^c$ after. Write $\gamma_i = \alpha + \kappa_1 + \dots \kappa_i$ 
        for $1 \le i \le m$, and define $\gamma_0 = \alpha$. We assume that 
        $\kappa_1, \dots, \kappa_s \in \Phi$ and $\kappa_{s+1}, \dots, \kappa_m \not \in \Phi$ 
        where $0 \leq s \leq m$. Our convention is that when $s=0$, no $\kappa_i$ lie in $\Phi$ 
        and when $s=m$, no $\kappa_i$ lie in $\Phi^c$.

    \np 
        We claim that all $\gamma_i$ are in $\Phi$. 
        If $\gamma_{i} \in \Phi$ and $\kappa_{i+1} \in \Phi$, then $\gamma_{i+1} \in \Phi$ by closure. 
        Using that $\alpha = \gamma_0 \in \Phi$, this shows that $\gamma_1, \dots, \gamma_s \in \Phi$. 
        Now, assume that for some $t > s$, we have that $\gamma_t \not\in \Phi$. By co-closure, 
        that implies that $\gamma_{j} \not \in \Phi$ for all $j \geq t$. In particular, 
        $\beta = \gamma_m \not \in \Phi$, which is a contradiction.  This proves our claim.

        \np 
         Thus, define $C_i$ to be the component containing $\gamma_i$. 
         Since $\gamma_{i + 1} - \gamma_i = \kappa_{i + 1} > 0$ for all $i$, this sequence 
         satisfies the desired conditions. This completes the proof of (ii).

\np 
    Now assume that (ii) is true. We will show that (iii) follows. 
    Let $D_0 = C_0 = A$, then set $D_1$ to be the next $C_i$ in the sequence which is different 
    from $D_0$, carrying on in this manner gives a sequence $A = D_0 < D_1 < \dots < D_n = B$ of components. 
    Take $\alpha \in A$, and set $\delta_0 = \alpha \in D_0$. 
    Now suppose we have a sequence of $\delta_i \in D_i$ which satisfy the result for all $i \leq k$. 
    By the way we defined $D_{k+1}$, there is a $C_i = D_k$, such that $C_{i+1} = D_{k+1}$, thus, $\gamma_{i+1}-\gamma_i \in R^+$. 
    Since the components $D_k$ and $D_{k+1}$ are distinct, $\gamma_{i+1}-\gamma_i$ cannot be in $\Phi^c$, 
    thus, we can use Lemma~\ref{lemma: If alpha + beta in C, alpha' + beta' in C}~(i) to 
    find a $\rho_{k+1} \in \Phi$ such that $\delta_{k+1} := \delta_k+\rho_{k+1} \in D_{k+1}$. 
    This proves (iii).

\np 
Now, assume that (iii) holds. Let $\alpha \in A$. Statement (iii) implies the existence of a 
sequence $\alpha=\delta_0, \delta_1, \dots, \delta_n$ such that $\delta_n \in B$ 
and $\delta_i < \delta_{i+1}$ for all $i$. In particular, $\alpha = \delta_0 < \dots <\delta_n$, 
which proves (iv). Clearly, (iv) implies (i).
\end{proof}

\np 
As a consequence of the equivalence of (i) and (iv) in Proposition~\ref{prop: partial order equivalences}, we have the following.

\begin{cor}\label{cor: A <= B => supp A subset of supp B}
Let $A, B\in \Comp$ with $A \leq B$. Then $\supp A \subseteq \supp B$. \qed
\end{cor}

\np 
We are now ready to prove that the relation $\leq$ is a partial order.

\begin{proposition} \label{prop:partial order on components is an order}
    The relation $\leq$ is a partial order on the set $\Comp$ .
\end{proposition}

\begin{proof}
    Reflexivity is clear. Transitivity follows from part~(iv) of 
    Proposition~\ref{prop: partial order equivalences}. To see antisymmetry, 
    assume by way of contradiction that $A < B$ and $B < A$. Let $\alpha \in A$ be a 
    maximal root in $A$. Then, by part~(iv) of Proposition~\ref{prop: partial order equivalences}, 
    there exists $\beta \in B$ satisfying $\alpha < \beta$ and some $\alpha' \in A$ 
    satisfying $\beta < \alpha'$. Together these imply that $\alpha < \alpha'$, contradicting maximality of $\alpha$.
\end{proof}

\begin{example}
    \label{ex: partial order on components}
    Below are Hasse diagrams that correspond, from left to right,
     to the posets of components for the inversion sets in 
    Example~\ref{ex: component examples} (i), (ii) and (iii).

\[\begin{tikzcd}[cramped]
	&&&&& {Z_4} &&&& {Z_2} \\
	& {Z_4} &&&& {Z_3} &&&& {Z_1} \\
	& {Z_3} &&& {Z_1} && {Z_2} &&& {X_4} \\
	{Z_1} && {Z_2} &&& {X_3} &&&& {X_3} \\
	& {X_1} &&& {X_1} && {X_2} && {X_1} && {X_2}
	\arrow[no head, from=5-2, to=4-3]
	\arrow[no head, from=5-2, to=4-1]
	\arrow[no head, from=4-1, to=3-2]
	\arrow[no head, from=4-3, to=3-2]
	\arrow[no head, from=3-2, to=2-2]
	\arrow[no head, from=5-5, to=4-6]
	\arrow[no head, from=5-7, to=4-6]
	\arrow[no head, from=4-6, to=3-7]
	\arrow[no head, from=4-6, to=3-5]
	\arrow[no head, from=3-5, to=2-6]
	\arrow[no head, from=3-7, to=2-6]
	\arrow[no head, from=2-6, to=1-6]
	\arrow[no head, from=5-9, to=4-10]
	\arrow[no head, from=5-11, to=4-10]
	\arrow[no head, from=4-10, to=3-10]
	\arrow[no head, from=3-10, to=2-10]
	\arrow[no head, from=2-10, to=1-10]
\end{tikzcd}\]
\end{example}

\section{Addition of components} \label{sec: addition}

\np In this section we introduce a partial addition of elements
of $\Comp$ which is compatible with the partial order discussed above
and exhibits many properties similar to the properties of addition in a QRS $R$.
When $\Comp=R^+$, i.e., when $\Phi = R^+$,  
the partial addition on $\Comp$ coincides with the (partial) addition on $R^+$.

\np
There are, however, some substantial differences between addition in $R$ and partial addition in $\Comp$.
If $R$ is a QRS, then addition in $R$ is the restriction of the addition in the ``root lattice of $R$'', i.e., 
the lattice generated by $R$. As a consequence, for any $\alpha, \beta \in R$, $\alpha + \beta$ is a well-defined 
element of the aforementioned lattice, which may or may not belong to $R$. Since addition in the root lattice is
commutative and associative and $R = - R$, addition in $R$ is commutative and associative and we only need to worry whether 
the elements involved in an identity like $(\alpha + \beta) + \gamma = \alpha + (\beta + \gamma)$ belong to $R$ or not.
In contrast, the partial 
addition in $\Comp$ does not extend naturally to addition in a lattice generated by $\Comp$. As a consequence, 
we need to study carefully properties such as commutativity, associativity, cancellation, etc. for addition in $\Comp$.

\np
By the very definition, the partial addition is commutative: $A+B$ is defined if and only if $B+A$ is defined and in 
that case they are equal. Associativity is more delicate:
the existence of $A+B$ and $(A+B)+C$ does not imply the existence of $(A+C) + B$. Nevertheless, we show that
if $A+B$ and $(A+B)+C$ exist, then at least one of $(A+C) + B$ or $(B+C) + A$ exists and equals $(A+B)+C$.
This property is the ``component'' analog of the two-out-of-three rule for addition in a QRS.
Similarly, cancellation rules do not apply to the partial addition in $\Comp$, i.e., 
the equality $A+ B = A + B'$ does not necessarily imply $B = B'$. However, the cancellation rule holds  
under the condition that $A+ B = A + B' \neq A$, see Proposition~\ref{prop: first cancellation rule for component addition}. 

\np 
Apart from discussing the questions stated above, we introduce the notion of a simple component. 
This a component that is not the sum of two other components or, equivalently, a component that contains 
a simple root of $R$. We then show that the properties of simple components resemble those of simple roots.

\np 
We complete the section by studying components of the form $kA$ for $A\in \Comp$ and $k \in \ZZ_{>0}$.

\subsection{Definition and basic properties}

\np 
We are now ready to define the addition on $\Comp$.

\begin{definition} 
Consider two elements $A, B \in \Comp$.
If there exist $\alpha \in A$ and $\beta \in B$ such that
$\alpha+\beta \in R$ then we define $A+B=C$ where $C$ is the element 
of $\Comp$ containing $\alpha+\beta$.  If no such pair
$(\alpha,\beta) \in A \times B$ exists then the sum $A+B$ is undefined.

\np 
Given $k \geq 3$ and $A_1, A_2, \dots, A_k \in \Comp$, we define inductively
$A_1 + A_2 + \dots + A_k$ as the component $(A_1 + A_2+\dots + A_{k-1}) + A_k$
if both $A_1 + A_2+\dots + A_{k-1}$ and $(A_1 + A_2+\dots + A_{k-1}) + A_k$ are defined
and  say that $A_1 + A_2 + \dots + A_k$ is undefined otherwise.
\end{definition}

\begin{remark} 
    \label{remark: additive characterization of partial order}
    \begin{enumerate}[(i)]
    \item It is not obvious that when $A+B$ is defined it is uniquely determined.  This is established by 
    Proposition~\ref{prop: addition works}. 
    \item It follows from 
    Proposition~\ref{prop: partial order equivalences}~(iii) that $A \leq B$ if 
    and only if there exists a possibly empty sequence of components
    $C_1,C_2,\dots,C_n$ such that $B = A + C_1 + C_2 + \dots + C_n$.
    \item The sum $A_1 + A_2 + \dots + A_k$ may be thought of as an extension of the notion of paths in QRSs. 
Indeed, $[\alpha; \kappa_{1}, \kappa_{2}, \dots, \kappa_{n}; \beta]$ is a path in $R$ if each of the partial sums 
$\alpha, \alpha + \kappa_1, \dots, \alpha + \kappa_1 + \dots + \kappa_n$ belongs to $R$ and 
$\beta = \alpha + \kappa_1 + \dots + \kappa_n$. In other words, $B = A + K_1 + K_2 + \dots + K_n$ for components 
is the counterpart of the path $[\alpha; \kappa_{1}, \kappa_{2}, \dots, \kappa_{n}; \beta]$. 
    \end{enumerate}
\end{remark}

\begin{prop}\label{prop: addition works}
Suppose that $A_1,A_2,\dots,A_n$ and $C$ 
are elements of $\Comp$.
    Let $\alpha_i, \alpha'_i \in A_i$ for $1 \leq i \leq n$ with $\sum_{i=1}^n \alpha_i \in C$ and 
    $\sum_{i=1}^n\alpha_i'  \in R$. Then $\sum_{i=1}^n\alpha_i' \in C$.
\end{prop}

\begin{proof}
It follows from Proposition~\ref{proposition: existence of paths} that 
the expression $\sum_{i=1}^n\alpha_i' \in R$ gives us a path 
$[\alpha_1'; \kappa_1, \ldots, \kappa_s, \sum_{i=1}^n\alpha_i']$ where each 
$\kappa_i$ is one of the $\alpha_j'$. By closure of $\Phi$, all partial 
sums of that path are in $\Phi$. In particular, $\sum_{i=1}^n\alpha_i' \in \Phi$.
 Let $C'$ denote the component of $\GPhi$ containing 
$\sum_{i=1}^n\alpha'_i$.  Then $C \geq A_i$ and $C' \geq A_i$ for all $i$.  If $C=A_j$ and $C'=A_k$
then $A_j \geq A_k$ and $A_k \geq A_j$ which implies $C=A_j=A_k=C'$.   Hence (interchanging $C$ and $C'$ 
if necessary) we may suppose, without loss of generality, that $C>A_i$ for all $i$.

\np 
For each $i$ with $1 \leq i \leq n$ there exists a path
$[\alpha_i; \kappa_{i,1},\kappa_{i,2}, \dots, \kappa_{i,k_i}; \alpha_i']$ in $A_i$ with
$\kappa_{i,j} \in \pm \Phi^c$ for all $i,j$. By Proposition~\ref{proposition: existence of paths}, there exists a path 
$[\sum_{i=1}^n \alpha_i; \nu_1, \nu_2, \dots, \nu_m; \sum_{i=1}^n \alpha_i']$ where each $\nu_k = \kappa_{i,j}$
for some $i,j$.   We claim that all partial sums along this path are in $\Phi$ (and therefore in $C$). It is clear that this 
path begins in $\Phi$, since $\sum_{i=1}^n \alpha_i \in C$.  Suppose that 
$\rho_k=\sum_{i=1}^n \alpha_i + \sum_{j=1}^k \nu_j \in \Phi$ is a partial sum along the path.
 We claim that the next partial sum $\rho_k + \nu_{k + 1}$ is also in $\Phi$.
 Note that $\rho_k + \nu_{k+ 1} = \rho_k + \kappa_{i,j}$ for some $i,j$.
  Since $\nu_{k+1}=\kappa_{i,j}$ appears as a step on a path in $A_i$, we may write 
  $\nu_{k+1} = \gamma - \gamma'$ where  $\gamma, \gamma' \in A_i$. 
  Thus, $\rho_k + \gamma - \gamma' \in R$. Observe that $\gamma \in R^+$ implies that
 $\rho_k + \gamma \ne 0$. Also, if $\rho_k - \gamma'=0$, then $\rho_k \in A_i$, which is 
 impossible since $\rho_k$ lies in a 
 component that is strictly greater than $A_i$. Hence, by Proposition~\ref{proposition: 2 of 3 rule} 
 either $\rho_k + \gamma \in R$ 
 or $\rho_k - \gamma' \in R$.

    \np 
If $\rho_k + \gamma \in R$, then $\rho_k + \gamma \in \Phi$ by the closure of $\Phi$. 
But then $\rho_k + \gamma - \gamma'$ cannot be in $R^-$, since if it were then $\gamma' > \rho_k + \gamma$, \
which is a contradiction because 
$\gamma' \in A_i$ and $\rho_k \in C$ and $A_i < C$.
The root $\rho_k + \gamma - \gamma'$ also cannot be in $\Phi^c$, since then $\rho_k + \gamma$ and $\gamma'$ 
would belong to the same component, which again yields a contradiction in the order
of components. Thus, in this case, $\rho_k + \gamma - \gamma' = \rho_k + \nu_{k + 1}$ must lie in $\Phi$. 

      \np 
If instead $\rho_k - \gamma'$ is a root, it cannot be in $R^-$ since that would imply $\rho_k < \gamma'$ 
contradicting the fact that $C > A_i$.  The root $\rho_k - \gamma'$ cannot be in $\Phi^c$, 
as otherwise $\rho_k$ and $\gamma'$ would lie in the same component. Therefore $\rho_k - \gamma' \in \Phi.$
Then the closure of $\Phi$ implies that 
$\rho_k + \nu_{k+1} = \rho + \gamma - \gamma' \in \Phi$, 
which in turn implies that the next partial sum $\rho_k+\nu_{k+1} \in \Phi$.

\np 
Hence we see that the entire path $[\sum_{i=1}^n \alpha_i; \nu_1, \nu_2, \dots, \nu_m; \sum_{i=1}^n \alpha_i']$ 
 remains in $C$. 
\end{proof}

\begin{proposition}\label{prop: component 2 of 3 rule}
    Let $A,B,C\in \Comp$ such that $(A+B)+C$ is defined. Then at least one of the two 
    sums $B+(A+C)$ or $A+(B+C)$ is defined and is equal to $(A+B)+C$. When all three sums are defined, they are all equal.
\end{proposition}

\begin{proof}
    We begin by proving the first assertion.  Let $D=A+B$.

\np 
    First we consider the situation when $(A+B)+C \in \{A,B\}$,
    say $(A+B)+C=A$.   Then $A=D=A+B$ since $A \leq A+B \leq A$.
    Furthermore $D+C = A + C$ is defined  and 
    $(A+B)+C = A+C = A = A+B =(A+C)+B$. 
    The case where $(A+B) + C = B$ is entirely similar.

    \np 
    Suppose then that $(A+B)+C > A$ and $(A+B)+C > B$. 
    Then either $D + C > D$ or both $D > A$ and $D > B$. We show that there exist 
    $\alpha \in A, \beta \in B$, and $\gamma \in C$ such that $\alpha + \beta + \gamma$ is a root. 
    If $D + C \neq D$, then we pick $\alpha \in A$ and $\beta \in B$ such that $\alpha + \beta \in D$. 
    Then, applying Lemma~\ref{lemma: If alpha + beta in C, alpha' + beta' in C}~(i), there exists 
    a $\gamma \in C$ such that $\alpha + \beta + \gamma \in (A + B) + C$. If $D\neq A, B$, 
    we pick $\delta \in D$ and $\gamma \in C$ such that $\delta + \gamma \in (A + B) + C$. 
    By Lemma~\ref{lemma: If alpha + beta in C, alpha' + beta' in C}~(ii), $\delta = \alpha + \beta$ 
    for some $\alpha \in A$, $\beta \in B$, so that $\alpha + \beta + \gamma \in (A + B) + C$. 

      \np 
    Then applying Proposition~\ref{proposition: 2 of 3 rule} we see that
    one of $\alpha+\gamma$ or $\beta + \gamma$ is a root.  Hence either
    $A+C$ or $B+C$ is defined. Moreover, because 
    $(\alpha + \gamma) + \beta = (\beta + \gamma) + \alpha = \alpha + \beta + \gamma$, 
    it follows that $(A + B) + C = (A + C) + B$ when $A + C$ is defined, and $(A + B) + C = (B + C) + A$ 
    when $B + C$ is defined.

\np 
    Finally we prove the last statement of the proposition.  Suppose then that say $(A+B)+C=(A+C)+B$ and that 
    $(B+C) + A$ is defined.  Applying the first assertion of 
    this proposition 
    to the sum $(B+C)+A$ we see that this sum is equal to
    either $(A+B)+C$ or to $(A+C)+B$.  Thus all three sums
    are equal in this case.
\end{proof}

\np 
Clearly, Proposition~\ref{prop: component 2 of 3 rule} is analogous to Proposition~\ref{proposition: 2 of 3 rule}.  
In the same way, the following proposition 
for components is the analog of Proposition~\ref{prop: any rearrangement of a reduced path is a reduced path} 
for roots.  The proof is also analogous to the proof of 
Proposition~\ref{prop: any rearrangement of a reduced path is a reduced path}, and we omit it.

\begin{proposition} \label{xxx}
Assume that the sum $A + K_1 + \dots + K_n$ is defined and $K_i + K_j$ is  undefined for all $1 \leq i \neq j \leq n$.
Then, for any permutation $i_1, i_2, \dots, i_n$ of $1,2, \dots, n$, the sum 
$A + K_{i_1} + \dots + K_{i_n}$ is defined and equal to $A + K_1 + \dots + K_n$. \hfill $\square$
\end{proposition}

\begin{example}
\label{ex: addition table examples}
We give the addition tables for the components corresponding to the inversion sets described in
Example~\ref{ex: component examples}.
\begin{enumerate}
\item[(i)] \label{ex: first addition table example}
The addition table for the inversion set of 
Example~\ref{ex: component examples}\,(i).
$$\begin{array}{l|ccccc}
&Z_1&Z_2&Z_3&Z_4&X_1\\
\hline
Z_1&-& Z_3& Z_4& -& Z_1\\
Z_2& Z_3& -& -& -& -\\
Z_3&Z_4& -& -& -& Z_3\\
Z_4&-& -& -& -& Z_4\\
X_1&Z_1& -& Z_3& Z_4& -
\end{array}$$

\np
Note that $(Z_1+Z_2)+X_1 = Z_3 + X_1  = Z_3$ while 
$Z_2+X_1$ is not defined. In particular, 
$(Z_1+Z_2)+X_1 = Z_1+(Z_2+X_1)$ does not hold.

\np 
\item[(ii)]\label{ex: second addition table example}
 The addition table for the inversion set of
Example~\ref{ex: component examples}\,(ii). 

$$\begin{array}{l|ccccccc}
&Z_1&Z_2&Z_3&Z_4&X_1&X_2&X_3\\
\hline
Z_1&-& Z_3& Z_4& -& Z_1& Z_1& Z_1\\  
Z_2&Z_3& -& -& -& Z_2& Z_2& Z_2\\
Z_3&Z_4& -& -& -& Z_3& Z_3& Z_3\\ 
Z_4&-& -& -& -& -& -& -\\
X_1&Z_1& Z_2& Z_3& -& -& X_3& -\\
X_2&Z_1& Z_2& Z_3& -& X_3& -& -\\
X_3&Z_1& Z_2& Z_3& -& -& -& -\\
\end{array}$$

\np
Note that $Z_1 + X_1 = Z_1 + X_2 = Z_1 + X_3 = Z_1$
but $X_1, X_2,$ and $X_3$ are pairwise distinct.

\np 
\item[(iii)] \label{ex: third addition table example}
 The addition table for the inversion set of
Example~\ref{ex: component examples}\,(iii).
$$\begin{array}{l|cccccc}
&Z_1&Z_2&X_1&X_2&X_3&X_4\\
\hline
Z_1& Z_2& -& Z_1& Z_1& Z_1& Z_1\\  
Z_2& -& -& -& -& -& -\\
X_1& Z_1& -& -& X_3& X_4& -\\ 
X_2& Z_1& -& X_3& -& -& -\\
X_3& Z_1& -& X_4& -& -& -\\
X_4& Z_1& -& -& -& -& -\\
\end{array}$$
\end{enumerate}
\end{example}

\subsection{Simple components}

We can define a concept analogous to simple roots for components as well.

\begin{proposition} \label{Simple Comp. equiv.}
    Let $C\in \Comp$. The following are equivalent.
    \begin{enumerate}
        \item[(i)] The component $C$ contains a simple root.
        \item[(ii)] Whenever $C = A+B$ for two components $A,B\in\Comp$, either $A=C$ or $B=C$.
    \end{enumerate}
\end{proposition}

\begin{definition}
\label{def: simple components}
    A component $C$ is {\em simple} if either (i) or (ii) of Proposition~\ref{Simple Comp. equiv.} holds. 
\end{definition}

\begin{proof}[Proof of Proposition~\ref{Simple Comp. equiv.}]
   \textit{(i) $\implies$ (ii)}: Suppose that $C$ is a component that contains a 
    simple root but does not satisfy (ii). Let $\theta \in C$ be a simple root and 
    $A, B \in\Comp$ 
    both not equal to $C$ such that $A+B=C$. By 
    Lemma~\ref{lemma: If alpha + beta in C, alpha' + beta' in C}~(ii), 
    there exists $\alpha \in A$ and $\beta \in B$ such that $\alpha+\beta=\theta$, a contradiction.

    \np 
    \textit{(ii) $\implies$ (i)}: Suppose $C$ is a component satisfying (ii) but not (i). 
    Let $\gamma \in C$ be minimal. By our assumption, $\gamma$ is not simple. Hence,
     there exist positive roots $\alpha, \beta$ such 
    that $\alpha + \beta = \gamma$. If one of $\alpha, \beta$ is in $\Phi^c$, 
    we would have that the other is in $\Phi$, and in particular, in $C$.  This contradicts the minimality of $\gamma$ in $C$.
    If $\alpha, \beta$ are both in $\Phi$, they belong to their own 
    components $A, B$ respectively, such that $A+B=C$. Since $C$ is simple, 
    either $A$ or $B$ is $C$. Either way, there is a root in $C$ strictly 
    smaller than $\gamma$, which contradicts minimality.
\end{proof}

\begin{remark}
    \label{rem: induction on comp}
    It follows from Proposition~\ref{Simple Comp. equiv.}~(ii) that components 
    minimal with respect to $\leq$ are simple. This is useful in inductive arguments on the 
    poset $(\Comp, \leq)$, since simple components are nicely characterized by the equivalence 
    outlined in Proposition~\ref{Simple Comp. equiv.} and are in general easy to work with.
\end{remark}

\begin{prop}
\label{prop: at most one simple component of full support}
    There is at most one simple component $C\in\Comp$ with $\supp C = \supp \Phi$.
\end{prop}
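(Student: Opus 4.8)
The plan is to argue by contradiction: suppose $C$ and $C'$ are two distinct simple components of $\GPhi$ with $\supp C = \supp C' = \supp \Phi$. The strategy is to pick suitable roots in $C$ and $C'$, connect them by a path of simple-root steps, reduce and rearrange the path using the results of Section~2, and then track how the partial sums move between components, ultimately forcing $C = C'$ or contradicting simplicity.

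First I would choose a maximal root $\gamma \in C$ (possible since $R$ is finite) and a root $\gamma' \in C'$. Since $\supp C' = \supp \Phi \supseteq \supp C$, and more to the point since both components have full support $\supp\Phi$, I expect to be able to compare $\gamma$ with some root of $C'$ in the partial order; the cleanest route is to use Proposition~\ref{prop: A <= B iff alpha <= beta}, which says $A \leq B$ iff some $\alpha \in A$, $\beta \in B$ satisfy $\alpha \leq \beta$. The aim is to show that either $C \leq C'$ or $C' \leq C$. If I can establish, say, $C \leq C'$, then writing $C + D_1 + \dots + D_n = C'$ and using simplicity of $C'$ together with the cancellation/associativity machinery (Propositions~\ref{prop: associativity} and \ref{prop: cancellation rules for component addition}), each partial sum is either $C$ itself or $C'$; this would pin down $C = C'$ unless the chain is trivial, and the trivial chain gives $C = C'$ directly.

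The crucial step, and the one I expect to be the main obstacle, is establishing that two components of full support are comparable in the partial order. This does not follow from Proposition~\ref{prop: A <= B => supp A subset of supp B} alone — equality of supports does not a priori force comparability. I would try to exploit the highest root $\tau$ of $R^+$: if $\Phi$ is an inversion set containing a component of full support, then $\tau \in \Phi$ (a root of full support dominates $\tau$ would be too strong; rather I would use that a component with $\supp C = S$ meets every ``level'' and in particular, by Proposition~\ref{prop: A <= B iff alpha <= beta}, any root of $R^+$ below a root of $C$ leads into $C$). Concretely: take $\beta \in C$ and $\beta' \in C'$ both of full support; choose a simple root $\theta$ in the support and a root-theoretic chain from $\theta$ up to $\beta$ and separately up to $\beta'$, then use that $\theta$ lies below both to derive $\theta$-components relations. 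Alternatively — and this may be the cleaner execution — I would show that a simple component of full support must be the unique maximal component of $\GPhi$: indeed if $C$ is simple with $\supp C = \supp \Phi$, then for any component $B$ we have $\supp B \subseteq \supp\Phi = \supp C$, and one can run a path argument (as in the proof of Proposition~\ref{prop: A <= B iff alpha <= beta}) from a root of $B$ up within $\Phi$ to reach $C$, giving $B \leq C$; two maximal elements of a poset coincide, so $C = C'$.

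Thus the skeleton is: (1) reduce to showing every simple full-support component is the top element of the component poset; (2) for an arbitrary component $B$, build a reduced positive path from a root of $B$ to a root whose component has full support, reorder steps (Proposition~\ref{prop: any rearrangement of a reduced path is a reduced path}) so that $\Phi$-steps come first, and use Proposition~\ref{prop: A <= B iff alpha <= beta} together with simplicity of $C$ to conclude $B \leq C$; (3) conclude by antisymmetry of $\leq$ (Proposition~\ref{prop: partial order is a partial order}) that there is at most one such $C$. The delicate point throughout is making sure the path from $B$ into $C$ genuinely lands in $C$ rather than some other full-support component, which is exactly where simplicity of $C$ (via Proposition~\ref{prop: simple component iff component with a simple}, so $C$ contains a simple root) should be brought to bear.
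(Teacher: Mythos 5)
There is a genuine gap here, and it lies in the skeleton you ultimately commit to. Your step (1) — that a simple component of full support is the unique \emph{maximal} element of the component poset, so that $B \leq C$ for \emph{every} component $B$ — is false, and the paper's own Example~\ref{ex: third addition table example} refutes it: there $C_1$ is a simple component of full support, yet $C_1 + C_1 = C_2 \neq C_1$, so $C_1 < C_2$ and $C_1$ is not maximal. (If anything, the simple full-support component sits at the \emph{bottom} of the relevant part of the poset, not the top.) Your first route is also not closed: you aim only to show that $C$ and $C'$ are comparable (``either $C \leq C'$ or $C' \leq C$'') and then hope to finish with simplicity plus cancellation. But simplicity of $C'$ applied to $C + D_1 + \cdots + D_n = C'$ only tells you, at the last step, that either the partial sum equals $C'$ or $D_n = C'$, and the latter branch does not force $C = C'$; one-sided comparability is not enough.

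The fix is the argument you brush past in the middle of your proposal, and it needs no paths, no highest root, and no cancellation machinery. By Proposition~\ref{prop: simple component iff component with a simple}, the simple component $C$ contains a simple root $\theta$. Since $\supp C' = \supp \Phi \supseteq \supp C \ni \theta$, the component $C'$ contains a root $\alpha$ with $\theta \in \supp \alpha$, i.e.\ $\theta \leq \alpha$ in the root order; Proposition~\ref{prop: A <= B iff alpha <= beta} then gives $C \leq C'$. Running the identical argument with the roles of $C$ and $C'$ exchanged gives $C' \leq C$, and antisymmetry (Proposition~\ref{prop: partial order is a partial order}) yields $C = C'$. The key point you needed, and did not isolate, is that you must establish \emph{both} inequalities — which is exactly what the symmetry of the hypotheses hands you for free — rather than one inequality plus a maximality claim.
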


\begin{proof} Suppose $C_1,C_2 \in \Comp$ are both simple 
with $\supp C_1 = \supp C_2 = \supp \Phi$. 
By definition, 
there is a simple root $\theta \in C_1$, and since $C_2$ is of full 
support in $\Phi$, there exists a root $\alpha \in C_2$ supported on $\theta$. 
Because $\theta \leq \alpha$, we have $C_1 \leq C_2$. Symmetrically, $C_2 \leq C_1$, and we conclude that $C_1 = C_2$.
\end{proof}

\begin{proposition}
\label{prop: all components are standard sums of simple components}
    Any element $A\in\Comp$ can be written as $A = T_1 + T_2 + \dots + T_n$ for simple components $T_i$.
\end{proposition}
\begin{proof}
    It suffices to prove that if $A$ is not simple, then $A$ can be written as $A = B + C$ where $B < A$ and $C$ is simple. 
    The result then follows by induction in the poset $\Comp$.
    To that end, suppose $A$ is not simple and let $\alpha \in A$ be minimal. 
    The root $\alpha$ is not simple since $A$ is not simple.
    Thus, $\alpha = \beta + \theta$ with $\theta$ a simple root and $\beta \in R^+$.
    If $\theta \in \Phi^c$, then $\beta \in A$ which contradicts minimality of $\alpha$.
    If $\beta \in \Phi^c$, then $\alpha - \theta \in \Phi^c$ and so $A$ contains a simple root, which is a contradiction.
    Thus, writing $C$ for the component of $\theta$ and $B$ for the component of $\beta$, we have $A=B+C$ where $B < A$ and 
    $C$ is simple.
\end{proof}

\np
We can always derive the partial order on $\Comp$ from its addition table. 
Reasonably, we might ask what information about the addition table we can learn from the partial order.
For example, the statement $A+B=B$ implies $A \leq B$. 
The following proposition provides a partial converse. 

\begin{prop}
\label{prop: addition of simple with lesser component is absorptive}
    Let $A, T \in \Comp$ with $T$ simple. If $A < T$, then $A + T$ is defined and is equal to $T$. 

\end{prop}
\begin{proof}
 By Remark~\ref{remark: additive characterization of partial order}, 
    there is a sequence of components $B_1, B_2, \dots, B_n$ such that 
    $A + B_1 + \dots + B_n = T$.
    Since $T$ is simple, either $A + B_1 + \dots + B_{n-1} = T$ or $B_n = T$.
    Induction on $n$ allows us to find a $k < n$ such that $A + B_1 + \dots + B_k + T = T$. 
    If $A + B_1 + \dots + B_{k-1} + T$ is defined and
    \[
    (A + B_1 + \dots + B_{k-1} + T) + B_k = T,
    \]
    then $T \leq A + B_1 + \dots + B_{k-1} + T \leq T$. Otherwise, if $B_k + T$ is defined and
    \[
    (A + B_1 + \dots + B_{k-1}) + (B_k + T) = T,
    \]
    then $T \leq B_k + T \leq T$.
    In both cases, $A + B_1 + \dots + B_{k-1} + T = T$.
    By Proposition~\ref{prop: component 2 of 3 rule}, at least one of these two conditions must hold.
    Induction on $k$ gives $A + T = T$.
\end{proof}

\begin{prop}
    \label{prop: components below a simple are additively closed}
    Let $T$ be a simple component. If $A, B < T$ and $A + B$ is defined, then $A + B < T$.
\end{prop}

\begin{proof}
    It suffices to prove that $A + B \leq T$, since the possibility that $A + B = T$ 
    is excluded by the fact that $T$ is simple. By 
    Proposition~\ref{prop: addition of simple with lesser component is absorptive}, 
    $A + T = B + T = T$. Thus, we may choose $\alpha \in A$ and $\beta \in B$ 
    such that $\alpha + \beta \in A + B$ and $\tau, \tau' \in T$ 
    such that $\alpha + \tau = \tau'$. We note that $(\alpha + \beta) + \tau - \tau' = \beta$ 
    so that by Proposition~\ref{proposition: 2 of 3 rule} either $\alpha + \beta + \tau$ 
    or $\alpha + \beta - \tau' = \beta - \tau$ is a root. If $\alpha + \beta + \tau$ is a root, 
    then $A + B + T$ is defined and by Proposition~\ref{prop: component 2 of 3 rule} it is equal to $T$, 
    yielding the desired result. If instead $\beta - \tau$ is a root, it must not be in $\pm \Phi^c$,
    as $B \neq T$. It further must not be in $\Phi$, since $B < T$. Thus, $\beta - \tau \in -\Phi$ 
    and $\tau - \beta \in \Phi$. Noting then that $(\alpha + \beta) + (\tau - \beta) = \tau' \in T$, 
    we conclude that $\alpha + \beta \leq \tau'$ and that $A + B \leq T$. 
\end{proof}

\np 
\begin{corollary}
    Let $T$ be a simple component with $\supp T = \supp \Phi$. If $A \ngeq T$, then $A + T = T$.
\end{corollary}

\begin{proof}
    By Proposition~\ref{prop: all components are standard sums of simple components}, 
    $A = T_1 + T_2 + \dots + T_n$ where $T_i$ is simple. Since $A \ngeq T$, $T_i \neq T$ 
    for all $1 \leq i \leq n$, and because $T$ is of full support in $\Phi$, $T_i < T$ for all $i$. 
    The result then follows from Propositions~\ref{prop: components below a simple are additively closed} 
    and \ref{prop: addition of simple with lesser component is absorptive}.  
\end{proof}

\subsection{Cancellation rules}
The partial addition of components does not necessarily satisfy the property that
$A+B = A+C$ implies $B = C$, i.e., cancellation does not hold in general. 
Below we study the question of when does the cancellation rule apply.

\begin{lemma}
    \label{lemma: alpha + beta = alpha' + gamma}
    Suppose $A, B, C, D \in \Comp$ satisfy $A + B = A + C = D$ and $A \neq D$. 
    Then there exist roots $\alpha, \alpha' \in A$, $\beta \in B$, and $\gamma \in C$ such that $\alpha + \beta = \alpha' + \gamma \in D$.
\end{lemma}

\begin{proof}
    If $B = C$, this is immediate. Otherwise, since $B \neq C$, either 
    $B \neq D$ or $C \neq D$, so we will assume without loss of generality that $C \neq D$ 
    and choose some $\alpha \in A$ and $\beta \in B$ such that $\alpha + \beta \in D$. Then, 
    by Lemma~\ref{lemma: If alpha + beta in C, alpha' + beta' in C}, $\alpha + \beta = \alpha' + \gamma$ 
    for some $\alpha' \in A$ and $\gamma \in C$, as desired.
\end{proof}

\np 
\begin{prop} \label{prop: first cancellation rule for component addition}
Let $A, B, C\in \Comp$.
If $A + B = A + C \neq A$, then $B = C$. \label{first}
\end{prop}

 \begin{proof} 
Let $D := A + B = A + C$. If $A=B$ and $A=C$ then $B=C$ and we are done.
Thus we may assume, without loss of generality, that $A \neq B$.
Applying Lemma~\ref{lemma: alpha + beta = alpha' + gamma}, we choose roots 
$\alpha, \alpha' \in A$, $\beta \in B$ and $\gamma \in C$ such that $\alpha + \beta = \alpha' + \gamma \in D$. 
We next fix a path $[\alpha'; \kappa_1, \kappa_2, \dots, \kappa_m; \alpha]$ with $\kappa_i \in \pm \Phi^c$. 
Noting that $\gamma - \beta = \alpha - \alpha'$ and applying 
Proposition~\ref{proposition: existence of paths}, we may construct the 
path $[\beta; \nu_1, \nu_2, \dots, \nu_n; \gamma]$ where $\nu_i$ is chosen from $\{ \kappa_1, \kappa_2, \dots, \kappa_m \}$.
We show that this path remains in $\Phi$ and thus in $B$.

Clearly the path begins in $\Phi$. We claim that if the $k$-th partial 
sum $\rho_k$ along the path is in $\Phi$, then $\rho_{k + 1}$ is as well. 
Indeed, if we suppose $\rho_{k + 1} = \rho_k + \nu_{k + 1} \in -\Phi$, 
then $-\rho_k - \nu_{k + 1} \in \Phi$, and so by closure $-\nu_{k + 1} \in \Phi$, 
which is a contradiction since $\nu_{k + 1} \in \pm \Phi^c$. Next we show that $\rho_{k + 1} \notin \pm\Phi^c$.
Writing $\nu_{k + 1} = \eta - \eta'$ for $\eta, \eta' \in A$, we apply 
Proposition~\ref{proposition: 2 of 3 rule} to conclude that either $\rho_k + \eta \in R$ or $\rho_k - \eta' \in R$. 
If $\rho_k + \eta \in R$, then $\rho_k + \eta \in D = A + B$, so $\rho_{k + 1} = \rho_k + \eta - \eta' \notin \pm \Phi^c$ 
since $\rho_{k + 1}$ would otherwise link $A$ and $D$. 
If $\rho_k - \eta' \in R$, then either (i) $\rho_k - \eta' \in \Phi$
or (ii) $\rho_k - \eta' \in -\Phi$ or (iii) $\rho_k - \eta' \in \pm \Phi^c$.

  \begin{enumerate}[(i)]
      \item In the first case, $\rho_k - \eta' \in \Phi$ implies $\rho_{k + 1} \in \Phi$ by closure of $\Phi$.

      \item In the second case, 
$\rho_k - \eta' \in -\Phi$ implies that $\eta' - \rho_k$ belongs to a component $E$ 
satisfying $E + B = A$. If $\rho_{k + 1} \in \pm \Phi^c$, 
then $(\eta' - \rho_k) - \eta = -\rho_{k + 1} \in \pm \Phi^c$, so $E = A$. 
We obtain from here the contradiction that $D = A + B = E + B = A$. 

    \item Finally, if $\rho_k - \eta' \in \pm \Phi^c$, then $A=B$, which contradicts the initial assumption that $A \neq B$.

\end{enumerate}

\np
Since $\rho_{k + 1} \notin -\Phi$ and $\rho_{k + 1} \notin \pm \Phi^c$, we 
conclude that $\rho_{k + 1} \in \Phi$, completing the proof. \end{proof}

\np
Next we provide a necessary condition for the failure of the cancellation property for $A+B = B$.

\begin{lemma}
    
\label{lemma: no cancellation implies A < simple < B}
    Suppose $A, B \in \Comp$ such that $A+B = B$. Then there exists a simple component $T \in \Comp$
     such that $A \leq T \leq B$.  As a consequence, if $A+A = A$, then $A$ is simple.
\end{lemma}
\begin{proof}
    If $B$ is simple, $A \leq B$ and there is nothing to prove, so assume $B$ is not simple.
    Proposition~\ref{prop: all components are standard sums of simple components}
    implies the existence of a component $C < B$ and a simple component $T$ such that $B = C+T$.
    Plugging this into $A+B = B$ gives $A+(C+T) = B = C+T$.
    Proposition~\ref{prop: component 2 of 3 rule} implies either $A+T$ is defined and $(A+T) + C = T + C$,
    or $A+C$ is defined and $(A+C) + T = C + T$.
    Since $B \neq T$ and $B \neq C$, we can cancel a term in both of the previous equations by 
    Proposition~\ref{prop: first cancellation rule for component addition} to see that either $A+T = T$ or $A+C = C$.
    In the first of the two cases, $A \leq T$ and so we are done. 
    In the second case, $C < B$ and so we can use induction on the poset $\Comp$ to obtain the desired result.
\end{proof}

\np
Next we state a condition which ensures that cancellation holds in $\Comp$.

\begin{prop}
If $U + T \neq T$ for all pairs of simple components $T, U \in \Comp$, then cancellation holds for
the partial addition in $\Comp$.
\end{prop}

\begin{proof}
    Proposition~\ref{prop: first cancellation rule for component addition} implies that 
    if  $A + B \neq B$ for all pairs of components $A, B \in \Comp$, then cancellation holds for
    the partial addition in $\Comp$. Hence it remains to show that 
    $U + T \neq T$ for all pairs of simple components $T, U \in \Comp$ implies that
    $A + B \neq B$ for all pairs of components $A, B \in \Comp$.

    \np
    Assume that $U + T \neq T$ for all pairs of simple components $T, U \in \Comp$ but 
    there are $A, B \in \Comp$  with $A + B = B$. The proof of 
    Lemma~\ref{lemma: no cancellation implies A < simple < B} implies that there exists a 
    simple component $T$ such that $A \leq T \leq B$ and $A + T = T$.  Then $A < T$ since 
    otherwise $T+T =T$ contradicting the assumption.
    Hence there is a simple $U$ such that $U \leq A$ and $U + T = T$ in contradiction to the assumption. 
\end{proof}

\subsection{Multiples of a component}
\np 
Proposition~\ref{prop: first cancellation rule for component addition} and 
Lemma~\ref{lemma: no cancellation implies A < simple < B} allow us to characterize the 
multiples of a component in $\Comp$.

\np 
If $r$ is a positive integer and $A \in \Comp$, we define $rA$ inductively as follows:
$1A := A$ and, for $r\geq 2$, 
\[rA:= \left\{ \begin{array}{ll}
 (r-1)A + A, & \text{\it{if this sum is defined}};  \\
 {\text {{\it undefined}}},    & {\text{\it{otherwise}}.} 
\end{array} \right.\]
There are three mutually exclusive alternatives for $A+A$: 
$A+A$ is undefined, $A+A=A$, or $A < A+A=2A$. 
The following lemma shows that these alternatives for $A+A$ determine how all multiples of $A$ behave.

\begin{prop}
    \label{prop: Repeated addition satisfies standard properties}
   Let $\langle A \rangle = \{ nA \mid n \geq 1, nA \text{ is defined} \}$. 
    There is $k$ such that $A, 2A, \dots, kA$ are defined and distinct, and
    $\langle A \rangle = \{A, 2A, \dots, kA \}$.
    Moreover, one of the following three mutually exclusive alternatives holds:
    \begin{enumerate}[(i)]
        \item $k=1$ and $nA$ is undefined for all $n \geq 2$.
        \item $k=1$ and $nA$ is defined and equal to $A$ for all $n \geq 2$. In this case, by 
        Lemma~\ref{lemma: no cancellation implies A < simple < B}, $A$ is necessarily simple.
        \item  $k \geq 2$, $nA$ is defined if and only if $1 \leq n \leq k$. Furthermore,
     for $1 \leq m, n \leq k$,
        $$mA + nA = \left\{ \begin{array}{ccc} 
                        (m+n)A, &\text{ if } &  m + n \leq k;\\
                        \text{undefined}, &\text{ if } & m + n > k.
                   \end{array} \right.$$
    \end{enumerate}
\end{prop}

\begin{proof} We let $k$ denote the cardinality of $\langle A \rangle$. 
If $nA$ and $(n+1)A$ are both defined and distinct, then $nA < (n+1)A$.
Hence $\langle A \rangle$ consists of the $k$ distinct elements $A, 2A, \dots, kA$. 
Then $(k+1) A$ is either undefined or $(k+1)A = kA$.
In the latter case $k=1$. Indeed, otherwise we would have $kA + A = (k-1)A  + A$ and  
Proposition~\ref{prop: first cancellation rule for component addition} would imply $kA = (k-1)A$, which contradicts
the assumption that $A, 2A, \dots, kA$ are distinct. 
To complete the proof, it remains to check the addition formula in (iii).

\np 
  Every $\alpha \in rA = (r-1)A + A$ may be written as 
$\alpha = \alpha' + \alpha''$ with $\alpha' \in (r-1)A$ and 
$\alpha'' \in A$ for all $r=1,2,\dots,k$.
It follows that every root $\alpha \in rA$
may be expressed as $\alpha = \alpha_1 + \alpha_2 + \dots + \alpha_r$
with $\alpha_1,\alpha_2, \dots, \alpha_r \in A$.  
From this, it is easy to see using Proposition~\ref{prop: addition works} that $mA + nA = (m+n)A$ whenever
$m+n \leq k$.  

\np 
    Consider now the case where $m+n > k$. By contradiction, assume that $mA + nA$ is defined and let 
    $t \geq k+1$ be minimal such that there exist
$1 \leq m,n \leq k$ with $m+n=t$ and 
$mA + nA$ is defined.
Further suppose $n$ is minimal such that 
$(t-n)A + nA$ is defined.

\np
If $n=1$, we already have an expression for $mA+A$, and there is nothing to show. 
Thus $n \geq 2$ and $mA + ((n-1)A + A)$ is defined.
  By Proposition~\ref{prop: component 2 of 3 rule} 
either $(mA + (n-1)A) + A$ or 
$(mA + A) + (n-1)A$ is defined.
In the former case, we again must have $m+n-1=k$ 
which cannot happen.
The latter case violates the minimality of 
$n$.   Either way we arrive at a contradiction.
\end{proof}

\np 
Here we give examples of each of the three possibilities from 
Proposition~\ref{prop: Repeated addition satisfies standard properties}.
\begin{example} \label{Example Z+Z}
\begin{enumerate}
    \item[(i)] 
    The set $\Phi = \{010, 110, 011\}$
is a primitive inversion set in $\mathbb B_3$.
There is a single component $Z$ in $\Comp$ and the sum $Z+Z$ is undefined.

\item[(ii)]
The set
$\Phi = \{010, 110, 011, 111, 122\}$
is a primitive inversion set in $\mathbb B_3$.
There is a single component $Z$ in $\Comp$.
Since $011 + 111 = 122$ we see that $Z+Z=Z$.
Hence $Z + Z + \dots + Z = Z$ for all such sums.

\item[(iii)]
The set 
$$\Phi=\left\{ 
\DFour1000,\DFour0010,\DFour0001,\DFour1110,\DFour1101,\DFour0111,\DFour1111\right\}$$
is a primitive inversion set in ${\mathbb D}_4$.
The graph $\GPhi$ has two components:\\
$Z = \{\DFour1000,\DFour0010,\DFour0001,\DFour1110,\DFour1101,\DFour0111\}$
and 
$2Z = \{ \DFour1111 \}$.
\end{enumerate}
\end{example}

\begin{remark}
    We are only aware of two other examples of primitive inversion sets with $k\geq 2$ as in 
    Proposition~\ref{prop: Repeated addition satisfies standard properties}.  They both have $k=2$.
They are $\Phi_1 = \{10, 21, 31 \} \subseteq \mathbb{G}_2$ with $Z=\{10,21\}$
and $\Phi_2 = \{100,001,111,012,112\} \subseteq \mathbb{B}_3$ with 
$Z=\{100,001,111,012\}$.
These examples are related to each other by diagram foldings. By taking $\Phi$ from 
Example~\ref{Example Z+Z}(iii), which is an inversion set in $\mathbb{D}_4$, and 
folding it to either $\mathbb{G}_2$ or $\mathbb{B}_3$, we get the inversion sets $\Phi_1$ and $\Phi_2$ above.
 \end{remark}

\section{Primitive and irreducible inversion sets} \label{sec: components as inflations}

\np In this section we continue to assume that $R$ is an irreducible QRS, $\Phi \subseteq R^+$ is an inversion set,
and we retain the notation from Sections~\ref{sec: graphs} and 
\ref{sec: addition}. 
We investigate the relationship between the set $\Gen(\Phi)$ and the supports of elements of $\Comp$.
In particular we prove that when $\Phi$ is written canonically as $\Phi = \inf_I^S(\Psi, X)$, 
then all elements of $\Comp$ are inflated from $I$. 
We then use this result to prove that
an inversion set $\Phi$ is primitive if and only if both $\Phi$ and $\Phi^c$ are irreducible.
The results of this section are used in Section~\ref{sec: GIT} below.

\subsection{The set $\Gen(A)$ for $A \in \Comp$}

\begin{prop}
\label{prop: sums of inflations is an inflation}
    Let $A, B\in \Comp$ such that $A+B$ is defined. If $I \in \Gen(A)\cap \Gen(B)$, then $I \in \Gen(A + B)$.
\end{prop}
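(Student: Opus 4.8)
The plan is to use the characterization of "inflated from $I$" given by Proposition \ref{prop: conditions equivalent to being an inflation}, specifically condition (ii): a subset $\Phi' \subseteq R^+$ is inflated from $I$ if and only if for any $\alpha, \beta \in R^+$ with $\pi_I(\alpha) = \pi_I(\beta) \neq 0$ one has $\alpha \sim_{\Phi'} \beta$. Here the relevant $\Phi'$ is the set $A + B$ (a union of components of $\GPhi$, hence meaningful as a subset of $R^+$), and we want to show it is inflated from $I$, i.e. closed under the relation "agree on $\pi_I$". Recall that $I \in \Gen(A) \cap \Gen(B)$ means exactly that both $A$ and $B$, as subsets of $R^+$, are inflated from $I$ (one should first unpack $\Gen$ applied to a component — a component $C$ of $\GPhi$ is a subset of $\Phi$, and the statement $I \in \Gen(C)$ should be read via Proposition \ref{prop: conditions equivalent to being an inflation} applied to the subset $C$ of $R^+$, so that $\pi_I$ is constant-fibre-wise on $C$ versus its complement).

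First I would reduce, using condition (iii) of Proposition \ref{prop: conditions equivalent to being an inflation}, to checking a single simple root: it suffices to show that for every $\gamma \in R^+$ and every $\theta \in \pm I$ with $\pi_I(\gamma) \neq 0$ and $\gamma + \theta \in R^+$, one has $\gamma + \theta \sim_{A+B} \gamma$. So suppose $\gamma \in A + B$; I must show $\gamma + \theta \in A + B$. By Lemma \ref{lemma: If alpha + beta in C, alpha' + beta' in C}(ii) (in the generic case $A \neq A+B$ and $B \neq A+B$; the degenerate cases $A = A+B$ or $B = A+B$ are handled directly from $I \in \Gen(A)$ or $I \in \Gen(B)$ together with Lemma \ref{lemma: If alpha + beta in C, alpha' + beta' in C}(i)), we may write $\gamma = \alpha + \beta$ with $\alpha \in A$, $\beta \in B$. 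Now $\gamma + \theta = \alpha + \beta + \theta$ and since $\gamma + \theta$ is a root while $\theta \in \pm I$, an inner-product argument — $\langle \alpha + \beta + \theta, \theta\rangle$ versus $\langle \alpha, \theta\rangle + \langle \beta, \theta\rangle$, combined with Proposition \ref{prop: if <a, b> < 0 then a + b is a root} / Lemma \ref{lemma: 2 of 3 rule} applied to the triple $\alpha, \beta, \theta$ — shows that either $\alpha + \theta$ is a root or $\beta + \theta$ is a root (this is the "$2$ of $3$" mechanism; one needs $\alpha + \theta \neq 0$ and $\beta + \theta \neq 0$, which hold since $\alpha, \beta > 0$).

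Say $\alpha + \theta \in R$. I claim $\alpha + \theta \in A$: indeed $\pi_I(\alpha) = \pi_I(\alpha + \theta)$, and if $\alpha + \theta > 0$ then $I \in \Gen(A)$ forces $\alpha + \theta \sim_A \alpha$, hence $\alpha + \theta \in A$; if $\alpha + \theta < 0$ then $-(\alpha+\theta) \in R^+$ lies in a component, and one derives a contradiction exactly as in the proofs of Lemma \ref{lem: Technical} and Lemma \ref{lem: technical} (using that $-\theta = \alpha + (-\alpha - \theta)$ would land $-\theta$, a positive or negative simple root, inside $\Phi$ or force two components to coincide improperly) — so in fact $\alpha + \theta > 0$ and $\alpha + \theta \in A$. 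Then $(\alpha + \theta) + \beta = \gamma + \theta$ is a root with $\alpha + \theta \in A$, $\beta \in B$, so by the definition of component addition and the fact that $A + B$ is already defined, $\gamma + \theta \in A + B$. The case $\beta + \theta \in R$ is symmetric. Conversely, if $\gamma \notin A + B$ one applies the same argument to $\Phi^c$ in place of $\Phi$, or simply runs the contrapositive; either way one concludes $\gamma + \theta \sim_{A+B} \gamma$, so $A + B$ is inflated from $I$, i.e. $I \in \Gen(A+B)$.

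The main obstacle I expect is the sign bookkeeping: pinning down that the auxiliary root $\alpha + \theta$ (or $\beta + \theta$) is genuinely positive and genuinely lands in the component $A$ (resp. $B$) rather than in some other component or in the complement. This requires repeating the now-familiar case analysis ("$\xi \in \pm\Phi^c$ forces two components to merge"; "$\xi \in -\Phi$ forces a simple root into $\Phi$") seen throughout Section 4, together with care about which of the degenerate equalities $A = A+B$, $B = A+B$, $A = B$ can occur; once the correct bookkeeping is in place the inner-product computation itself is routine.
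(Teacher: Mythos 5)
Your overall strategy matches the paper's: reduce to the generic case where $A+B \notin \{A,B\}$, write $\gamma = \alpha + \beta$ via Lemma \ref{lemma: If alpha + beta in C, alpha' + beta' in C}(ii), apply the two-of-three mechanism (Lemma \ref{lemma: 2 of 3 rule}) to get $\alpha + \theta \in R$ or $\beta + \theta \in R$, argue that this new root stays in the same component, and reassemble. So the high-level route is the same.

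However, there is a genuine gap at the step "I claim $\alpha + \theta \in A$." To apply $I \in \Gen(A)$ (via Proposition \ref{prop: conditions equivalent to being an inflation}(ii) or (iii)) and conclude $\alpha + \theta \sim_A \alpha$, you need $\pi_I(\alpha) \neq 0$; without that hypothesis, $I \in \Gen(A)$ says nothing about the pair $\alpha, \alpha+\theta$ (they would both lie in the zero fibre, where $A$ and $A^c$ can be arbitrary). You never verify $\pi_I(\alpha) \neq 0$, and it does not hold a priori. The paper supplies exactly the missing observation: if $\pi_I(\alpha) = 0$ then $\pi_I(\beta) = \pi_I(\gamma) \neq 0$, and since $I \in \Gen(B)$ and $\gamma \in A+B \subseteq \Phi$, the inflation condition on $B$ would force $\gamma \in B$, contradicting $A+B \neq B$. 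This uses both hypotheses $I \in \Gen(A)$ and $I \in \Gen(B)$ in an essential and non-symmetric way, and it is the crux of the proof, not a bookkeeping detail. Your discussion of "sign bookkeeping" (whether $\alpha + \theta > 0$) actually becomes moot once $\pi_I(\alpha) \neq 0$ is established, since a positive root $\alpha$ with nonzero image in $R/I$ cannot lie entirely in $\Span I$, so $\alpha + \theta$, if a root, is automatically positive; the case analysis you gesture at via Lemmas \ref{lem: Technical} and \ref{lem: technical} is the wrong tool here. So the argument as written is incomplete at a logically essential point, even though the overall outline is correct.
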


\begin{proof}
    If $A + B$ is equal to $A$ or $B$ the result is immediate. 
    Otherwise, for any $\gamma \in A + B$ such that $\pi_I(\gamma) \neq 0$, $\pm \theta \in I$, and $\gamma+\theta \in R$,
    we would like to show that $\gamma+\theta \in A+B$.
    To that end, we apply Lemma~\ref{lemma: If alpha + beta in C, alpha' + beta' in C}~(ii) 
    and write $\gamma = \alpha + \beta$ where $\alpha \in A, \beta \in B$. 
    Then $\gamma + \theta = \alpha + \beta + \theta$. 
    
    \np
    We note that $\pi_I(\alpha) \neq 0$. Otherwise $\beta$ and $\alpha + \beta$ would be in 
    the same fibre, and since $I \in \Gen(B)$, we would have $A + B \subseteq B$ 
    which would imply that $A+B = B$. Similarly, $\pi_I(\beta) \neq 0$. 
    In particular, $\alpha + \theta$ and $\beta + \theta$ are all non-zero. 
    Note also that $\alpha + \beta \neq 0$, because both $\alpha, \beta$ are positive. 
    Therefore, using Proposition~\ref{proposition: 2 of 3 rule}, we conclude that either $\alpha + \theta$ is 
    a root or $\beta  + \theta$ is a root. 
    Say $\alpha + \theta \in R$.
Since $I \in \Gen(A)$, this means that $\alpha + \theta \in A$ and therefore $(\alpha + \theta) + \beta \in A + B$. 
    We thus conclude that $I \in \Gen(A + B)$.
\end{proof}

\begin{prop}
\label{prop: supports of components are inflaters of Phi}
    For all $C\in \Comp$, $\supp C \in \Gen(\Phi)$.
\end{prop}

\begin{proof}
Let $I = \supp C$ and $D = \bigcup_{C' \leq C} C'$. It follows from 
Corollary~\ref{cor: A <= B => supp A subset of supp B} that $\supp D = I$. 
Next, suppose $\alpha + \beta \in D$ for some $\alpha, \beta \in R^+$. 
We will show that either $\alpha$ or $\beta$ is in $D$ to prove that $D$ is co-closed. 
Since $C \subseteq \Phi$ and $\Phi$ is co-closed, either $\alpha$ or $\beta$ is in $\Phi$, 
so without loss of generality we let $\alpha \in \Phi$. We then give the labels 
$C_{\alpha + \beta}$ and $C_\alpha$ to the components respectively containing $\alpha + \beta$ 
and $\alpha$. Since $\alpha \leq \alpha + \beta$, we have 
that $C_\alpha \leq C_{\alpha + \beta} \leq C$, so $C_\alpha \subseteq D$, $\alpha \in D$, and $D$ is co-closed.

\np 
Having constructed the co-closed set $D$, we use it to prove our result. 
We suppose toward a contradiction that there exist $\gamma, \delta \in R^+$ 
with $\pi_I(\gamma) = \pi_I(\delta) \neq 0$, $\gamma \in \Phi$, and $\delta \in \Phi^c$. 
By Proposition~\ref{prop: co-closed sets span their support}, $\delta - \gamma \in \ZSpan(D)$, and so 
Proposition~\ref{proposition: existence of paths} guarantees a path $[\gamma; \kappa_1, \dots, \kappa_n; \delta]$ 
with $\kappa_i \in \pm D$. Both $\gamma$ and $\delta$ belong to the same fibre 
of $R / I$, and since $\kappa_i \in \pm D \subseteq \Span I$, every partial sum along 
this path belongs to that same fibre and hence must be a positive root. Since 
$\gamma \in \Phi$, $\delta \in \Phi^c$, and all partial sums are positive, there will be 
some $i$ such that the $i$-th partial sum $\gamma_i$ is in $\Phi$ while the $(i + 1)$-th 
partial sum $\gamma_{i + 1}$ is in $\Phi^c$. We would then have $\gamma_i + \kappa_{i + 1} = \gamma_{i + 1}$. 
We note that $\kappa_{i + 1}$ must be negative since since $\Phi$ is closed and $\gamma_{i + 1} \in \Phi^c$. 
But then $\gamma_i - (-\kappa_{i + 1}) = \gamma_{i + 1}$, so that $\gamma_i$ and $-\kappa_{i + 1}$ are
linked in $\GPhi$. Since $-\kappa_{i + 1} \in D$ and $D$ is a union of components of $\GPhi$, 
this implies $\gamma' \in D$, which is not possible because $\pi_I(\gamma') \notin \pi_I(D) = \{ 0 \}$.
\end{proof}

\begin{prop}\label{prop: special component in inflation of a prime}
   
    Let $\Phi = \inf_I^S(\Psi, X)$ expressed canonically with $\Psi$ primitive. There is a 
    unique simple component $Z$ in $\Comp$ not contained in
    $X$ with $\supp Z = S$.
    All other simple components are contained in $X$.
\end{prop}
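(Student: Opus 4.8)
The plan is to first recall that, by Theorem~\ref{theorem: canonical inflation}, when $\Phi = \inf_I^S(\Psi,X)$ is the canonical form with $\Psi$ primitive, the pieces of $\GPhi$ lying outside $X$ are exactly the components contained in $\inf_I^S(\Psi,\emptyset) = \pi_I^{-1}(\Psi)$, cf.\ the Remark following Proposition~\ref{prop: decompositions don't split components}. So the components of $\GPhi$ split into those inside $X$ (whose support lies in $I$) and those meeting $\pi_I^{-1}(\Psi)$. By Proposition~\ref{prop: support of inflations}(i), $\supp\Phi = \supp\Phi^c = S$, so ``full support'' means $\supp C = S$; in particular no component contained in $X$ has full support, since $X \subseteq R_I^+$ and $I \subsetneq S$. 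Hence every full-support simple component, if it exists, must meet $\pi_I^{-1}(\Psi)$, and the second assertion (all other simple components are inside $X$) will follow once we show there is \emph{exactly} one full-support simple component among those meeting $\pi_I^{-1}(\Psi)$, and that no component meeting $\pi_I^{-1}(\Psi)$ other than this one is simple.

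For existence and uniqueness of a full-support simple component, I would invoke Proposition~\ref{prop: at most one simple component of full support} for uniqueness, so the work is existence. Here is where I would use primitivity of $\Psi$. By Proposition~\ref{prop: simple component iff component with a simple}, a simple component is one containing a simple root, and simple roots of $R$ not in $I$ project to simple roots of $R/I$. Since $\Psi$ is a primitive inversion set of $(R/I)^+$, by Corollary~\ref{cor: primitive implies irreducible} $\Psi$ is irreducible, so by Theorem~\ref{theorem: irreducibility equivalent conditions} the graph $G_\Psi^{\Psi^c}$ (in $R/I$) has every component of full support $\supp\Psi = S/I$ and a unique simple component $\bar A$. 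I would lift $\bar A$: the preimage structure together with Lemma~\ref{lemma: lifts of sums in quotients} shows that the component $A$ of $\GPhi$ containing a chosen simple root mapping into $\bar A$ satisfies $\pi_I(A) \supseteq$ the relevant part of $\bar A$, hence $\supp A \supseteq$ (a transversal of) $S\setminus I$; combined with the fact (Proposition~\ref{prop: Gen(Phi) for primitive Psi}, or directly Proposition~\ref{prop: supports of components are inflaters of Phi}) that $\supp A \in \Gen(\Phi)$ and $\Psi$ primitive forces $\supp A = I$ or $\supp A = S$, and since $\supp A \supsetneq I$ we get $\supp A = S$. So $A$ is a simple component of full support.

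It remains to show \emph{no other} component is simple, i.e.\ every simple component of $\GPhi$ either is $A$ or lies in $X$. Suppose $C$ is simple, $C \not\subseteq X$; then $C$ meets $\pi_I^{-1}(\Psi)$, and by Proposition~\ref{prop: simple component iff component with a simple} it contains a simple root $\theta$. If $\theta \in I$ then $\supp C \subseteq \{\theta\}\cup\dots$ is forced to lie in $I$ by the fiber-connectivity used in Proposition~\ref{prop: conditions equivalent to being an inflation}—no, more carefully: $\theta\notin I$ necessarily, since a simple root in $I$ generates (under the component-addition/closure structure) only roots in $R_I$, contradicting $C\not\subseteq X$; once $\theta\notin I$, its image $\bar\theta$ is a simple root of $R/I$ lying in the simple component $\bar A$ of $G_\Psi^{\Psi^c}$, so $C$ and $A$ both project into $\bar A$. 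Then by Proposition~\ref{prop: A <= B iff alpha <= beta} applied to a simple root in one and a root above it in the other (using full support of $A$), we get $A \leq C$ and $C \leq A$, hence $C = A$. The main obstacle I anticipate is the bookkeeping in this last paragraph: carefully justifying that a simple root of $R$ not in $I$ projects into the \emph{unique} simple component of $G_\Psi^{\Psi^c}$ and that the lifting of components via Lemma~\ref{lemma: lifts of sums in quotients} is compatible with component addition, so that comparisons in $R/I$ transfer back to comparisons in $R$. Everything else is an assembly of the cited results.
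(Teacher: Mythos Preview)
Your core idea is right and matches the paper's: the decisive fact is that for any component $C$ of $\GPhi$ one has $\supp C \in \Gen(\Phi)$ (Proposition~\ref{prop: supports of components are inflaters of Phi}), and when $\Psi$ is primitive, Proposition~\ref{prop: Gen(Phi) for primitive Psi} forces $\supp C \subseteq I$ or $\supp C = S$. You invoke exactly this, so the existence of a full-support simple component is fine.

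However, your detour through the quotient graph $G_\Psi^{\Psi^c}$ is unnecessary, and it leads you into a genuine gap in the last paragraph. You claim that from ``$C$ and $A$ both project into $\bar A$'' you can get $A \leq C$ and $C \leq A$. But ``using full support of $A$'' only gives $C \leq A$ (a simple root $\theta \in C$ lies in $\supp A$, so $\theta \leq \alpha$ for some $\alpha \in A$). For $A \leq C$ you would need $C$ supported at a simple root of $A$, which you have not shown; knowing $\pi_I(C) \subseteq \bar A$ does not tell you that $\pi_I(C)$ hits any particular simple root of $\bar A$, nor that $\supp C = S$. The ``bookkeeping'' you anticipated is real, and it does not close.

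The fix is to drop the quotient entirely and simply re-run your existence argument for $C$: if $C$ is simple and $C \not\subseteq X$, then by the Remark after Proposition~\ref{prop: decompositions don't split components} we have $C \subseteq \inf_I^S(\Psi,\emptyset)$, so the simple root $\theta \in C$ satisfies $\theta \notin I$; hence $\supp C \in \Gen(\Phi)$ but $\supp C \not\subseteq I$, so $\supp C = S$. Now $C$ and $A$ are both full-support simple components, and Proposition~\ref{prop: at most one simple component of full support} gives $C = A$. This is exactly the paper's argument: it observes that the choice of simple root $\theta \in \inf_I^S(\Psi,\emptyset)$ was arbitrary, so \emph{every} such $\theta$ lies in a full-support component, and uniqueness follows immediately. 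No lifting, no $G_\Psi^{\Psi^c}$, no Lemma~\ref{lemma: lifts of sums in quotients}.
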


\begin{proof}
    Pick a simple root $\theta \in \inf_I^S(\Psi, \emptyset)$ and let $Z\in \Comp$ be the component 
    containing $\theta$. By Proposition~\ref{prop: supports of components are inflaters of Phi}, $\supp Z \in \Gen(\Phi)$, 
    and by Proposition~\ref{prop: Gen(Phi) for primitive Psi}, either $\supp Z \subseteq I$ or $\supp Z = S$. 
    Since $\theta \in \supp Z$ but $\theta \notin I$, we conclude that $\supp Z = S$.
    Because we chose $\theta$ arbitrarily, it follows that any component containing a simple root 
    of $\inf_I^S(\Psi, \emptyset)$ has support $S$, and by 
    Proposition~\ref{prop: at most one simple component of full support} there is only one such component. 
    All other simple components therefore must be contained in $X$.
\end{proof}

\np  
The following corollary is useful in the proof of Proposition~\ref{prop: components are inflated by the canonical}.

\begin{corollary} \label{cor:sum of simples}
    Let $\Phi = \inf_I^S(\Psi, X)$ written in canonical form with $\Psi$ primitive. 
    Let $X', Z \in \Comp$ be simple components where $X' \subseteq X$ and $\supp Z = S$. Then $X' + Z = Z$.
\end{corollary}

\begin{proof}
    Let $\theta \in X'$ be simple. Since $\supp Z = S$, there exists some $z \in Z$ such 
    that $\theta \leq z$, so that $X' \leq Z$. The result then follows from 
    Proposition~\ref{prop: addition of simple with lesser component is absorptive}. 
\end{proof}

\np 
While this is all that is required for the proof of 
Proposition~\ref{prop: components are inflated by the canonical}, one might 
reasonably wonder whether $Y + Z = Z$ for any (not necessarily simple component) 
$Y \subseteq X$. This is implied by the following statement.

\begin{remark} 
Let $T \in \Comp$ be a simple component with $\supp T = S$. Then,
    \begin{enumerate}[(i)]
        \item All components are comparable to $T$.
        \item All components greater than $T$ are equal to $kT$ for some $k > 1$.
        \item Letting $\Phi = \inf_I^S(\Psi, X)$ be the canonical form of $\Phi$, $\Psi$ 
        is either primitive or $(R / I)^+$ where $R / I$ is a rank-one system.
    \end{enumerate}
\end{remark}

\begin{prop}\label{prop: components are inflated by the canonical}
    Let $\Phi$ be an inversion set canonically inflated from $I$. Then all components of $\GPhi$ are inflated from $I$.
\end{prop}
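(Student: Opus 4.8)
The plan is to dispose of the trivial case, reduce to a single simple component, and then identify that component with the full $\pi_I$-preimage of a component of the graph built over the quotient; the last step rests on a fibre-saturation statement that I would attack by a minimal-counterexample argument.

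First, by the Remark following Proposition~\ref{prop: decompositions don't split components}, every component $C$ of $\GPhi$ satisfies $C \subseteq X$ or $C \subseteq \inf_I^S(\Psi,\emptyset) = \pi_I^{-1}(\Psi)$. If $C \subseteq X$ then $\supp C \subseteq \supp X \subseteq I$, so $C = \inf_I^S(\emptyset,C)$ is inflated from $I$ --- equivalently, every fibre of $\pi_I$ over a nonzero element of $R/I$ misses $C$, so condition~(iv) of Proposition~\ref{prop: conditions equivalent to being an inflation} holds for $C$. So assume $C \subseteq \pi_I^{-1}(\Psi)$; in particular the canonical alternative $\Psi = \emptyset$ does not occur. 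Writing $C$ as a standard sum of simple components by Proposition~\ref{prop: all components are sums of simple components} and applying Proposition~\ref{prop: sums of inflations is an inflation} along that sum, it suffices to treat $C$ simple. A simple component contains a simple root of $R$ (Proposition~\ref{prop: simple component iff component with a simple}); if that root lies in $I$ then $C \subseteq X$, already done. Otherwise, when $\Psi$ is primitive, Proposition~\ref{prop: special component in inflation of a prime} forces $C$ to be the unique simple component $C_0$ of full support; the remaining canonical alternative $\Psi = (R/I)^+$ with $I$ minimal I would handle separately, where $\Psi^c = \emptyset$ makes the graph $G_\Psi^{\Psi^c}$ over $R/I$ edgeless, so each $\Psi$-part component of $\GPhi$ lies in a single fibre of $\pi_I$ and one checks directly, as below, that such a fibre is connected in $\GPhi$.

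So let $\Psi$ be primitive and $C = C_0$. Since $\Psi$ is primitive it is irreducible (Corollary~\ref{cor: primitive implies irreducible}), so by Theorem~\ref{theorem: irreducibility equivalent conditions} the graph $G_\Psi^{\Psi^c}$ formed in $R/I$ has a unique simple component $\overline C$, of full support. I claim $C_0 = \pi_I^{-1}(\overline C) = \inf_I^S(\overline C,\emptyset)$, which displays $C_0$ as inflated from $I$. For ``$\subseteq$'': along an edge of $\GPhi$ inside $C_0$ the difference $\eta \in \pm\Phi^c$; if $\pi_I(\eta) \neq 0$ then $\pi_I(\eta) \in \pm\Psi^c$ (otherwise $\eta \in \pi_I^{-1}(\Psi) \subseteq \Phi$), so $\pi_I$ carries the edge to an edge of $G_\Psi^{\Psi^c}$, and if $\pi_I(\eta)=0$ the edge collapses; hence $\pi_I(C_0)$ is connected in $G_\Psi^{\Psi^c}$ and contains the image of a simple root of $R$, so $\pi_I(C_0) \subseteq \overline C$. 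For the reverse, I would lift paths: using Lemma~\ref{lemma: lifts of sums in quotients} each step of a path in $\overline C$ lifts to a root of $R$ which, being a positive root projecting outside $\Psi$, lies in $\Phi^c$ and so is an edge of $\GPhi$; since $\overline C$ is connected this shows $\pi_I(C_0) = \overline C$ and, crucially, that every $\beta \in \pi_I^{-1}(\overline C)$ is joined in $\GPhi$ to some root of $C_0$ lying in the fibre over any prescribed vertex of $\overline C$. Hence $C_0 = \pi_I^{-1}(\overline C)$ reduces to: every fibre $\pi_I^{-1}(\overline\nu)$ with $\overline\nu \in \overline C$ is contained in $C_0$.

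For this, suppose not and let $\beta \in \pi_I^{-1}(\overline C) \setminus C_0$ be minimal in the root order. The simple roots of $R$ projecting into $\overline C$ all lie in $C_0$ (apply the inclusion above to their own components, together with Propositions~\ref{prop: at most one simple component of full support} and~\ref{prop: special component in inflation of a prime}), so $\beta$ is not simple, say $\beta = \beta_1 + \beta_2$ with $\beta_i \in R^+$. Coclosure gives $\beta_1 \in \Phi$ or $\beta_2 \in \Phi$; if, say, $\beta_1 \in \Phi$ and $\beta_2 \in \Phi^c$, then $\beta_1$ is joined to $\beta$ by an edge of $\GPhi$ and one computes $\pi_I(\beta_1) \in \overline C$, so $\beta_1 \in \pi_I^{-1}(\overline C) \setminus C_0$ with $\beta_1 < \beta$, contradicting minimality; hence $\beta_1, \beta_2 \in \Phi$. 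Taking $\alpha \in C_0 \cap \pi_I^{-1}(\pi_I(\beta))$ (available since $\pi_I(C_0) = \overline C$) and forming, via Proposition~\ref{prop: existence of paths}, a path from $\alpha$ to $\beta$ with steps among the simple roots of $I$, one reaches, after reducing and reordering, a root $\rho \in C_0 \cap \pi_I^{-1}(\pi_I(\beta))$ with $\rho < \beta$ and a simple root $\zeta \in I$ such that $\beta = \rho + \zeta$; here $\rho \in C_0$ by minimality, and $\zeta \in \Phi$ (otherwise $\rho \sim \beta$ forces $\beta \in C_0$), so $\zeta \in X$. Letting $D \neq C_0$ be the component of $\beta$ we obtain $C_0 + C(\zeta) = D$, where $C(\zeta) \subseteq X$ is the simple component of $\zeta$; moreover $\zeta \leq \sigma$ for some $\sigma \in C_0$ (full support), so $C(\zeta) \leq C_0$ by Proposition~\ref{prop: A <= B iff alpha <= beta}, while $C(\zeta) \neq C_0$ since $\supp C(\zeta) \subseteq I \subsetneq S$. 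The main obstacle I expect here is the final contradiction: that $C(\zeta) \leq C_0$, $C(\zeta) \neq C_0$, and $C_0 + C(\zeta)$ defined together force $C_0 + C(\zeta) = C_0$ (intuitively, adding a strictly smaller component cannot enlarge $C_0$). Establishing this seems to require an induction --- on $\rk R$, or on the height of $\beta$, in which $\Phi$ is re-expressed as an inflation from an intermediate subset of $S$ --- combined with the weak associativity and the cancellation rules of Propositions~\ref{prop: associativity} and~\ref{prop: cancellation rules for component addition}; with that in hand the minimal-counterexample argument, and hence the Proposition, is complete.
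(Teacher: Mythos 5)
Your overall architecture matches the paper's: dispose of components contained in $X$, reduce to simple components via Propositions \ref{prop: all components are sums of simple components} and \ref{prop: sums of inflations is an inflation}, and in the primitive case concentrate on the unique simple component $C_0$ of full support from Proposition \ref{prop: special component in inflation of a prime}. But there is a genuine gap exactly at the point you yourself flag as ``the main obstacle'': you never prove that $C_0 + C(\zeta) = C_0$ for a simple component $C(\zeta) \subseteq X$ (equivalently, that adding a component that is $\leq C_0$ but $\neq C_0$ cannot move you off $C_0$). This is not a routine consequence of Propositions \ref{prop: associativity} and \ref{prop: cancellation rules for component addition}; it is precisely where the paper's proof does its heavy lifting. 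The paper writes $X_i + A_1 + \cdots + A_{k-1} + C = C$ using $X_i \leq C$ and the minimality of $C$ among components not contained in $X$, and then runs a separate induction on $k$ using Lemma \ref{lemma: 2 of 3 rule} together with the observation that any component containing a root obtained by adding something to a root of $C$, and having support outside $I$, must be of the form $tC$ with $tC \leq C$, hence equal to $C$. Gesturing at ``an induction on $\rk R$ or on the height of $\beta$'' does not supply this step; as written, the proposal assumes its own crux.

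A secondary problem: in your minimal-counterexample step you produce, from a path from $\alpha \in C_0$ to $\beta$ with steps among the simple roots of $I$, a root $\rho < \beta$ in the same fibre with $\beta = \rho + \zeta$ for a single positive simple $\zeta \in I$. Proposition \ref{prop: existence of paths} only gives steps in $\pm I$ (the coefficients of $\beta - \alpha \in \Span I$ may have mixed signs), reduction replaces steps by sums of subcollections which need not be simple roots, and the final step need not be positive; so neither $\rho < \beta$ nor $\beta - \rho \in I$ is guaranteed. The paper sidesteps this entirely by verifying condition (iii) of Proposition \ref{prop: conditions equivalent to being an inflation} directly for $C$: given $\gamma \in C$ and $\theta \in \pm I$ with $\gamma + \theta \in R$, it splits into the cases $\theta \in \pm\Phi^c$ (adjacency in $\GPhi$), $\theta \in \Phi$ (which uses $X_\theta + C = C$), and $\theta \in -\Phi$ (which uses the partial order and the minimality of $C$). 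Your identification $C_0 = \pi_I^{-1}(\overline{C})$ with the unique simple component $\overline{C}$ of $G_\Psi^{\Psi^c}$ is a reasonable reformulation, and the two-way lifting argument showing $\pi_I(C_0) = \overline{C}$ is sound, but without repairing the two points above the proof is incomplete.
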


\begin{proof} We write $\Phi = \inf_I^S(\Psi, X)$ and split the argument into three mutually exclusive cases.

\begin{enumerate}
    \item[(i)] $\Psi = \emptyset$:

\np 
Here the result is trivial, since $\supp C \subseteq \supp \Phi = I$ for every component $C$ of $\GPhi$, so that $I \in \Gen(C)$.

\item[] 
    \item[(ii)] $\Psi = (R / I)^+$:

\np 
Since $I$ is canonical, $\supp \Phi^c = I$. This implies that each component of $\GPhi$ is 
contained in a single fibre of $\pi_I$. To show that each is inflated from $I$ we need only 
to show that those components that are contained in a fibre over a non-zero root of $R / I$ 
are exactly the fibre that contains them. This is a consequence of 
Propositions~\ref{prop: co-closed sets span their support} and \ref{proposition: existence of paths}, 
that state respectively that roots in $\Phi^c$ span the difference between any two roots $\alpha, \beta$ 
in the same fibre, and that roots of $\Phi^c$ can therefore be used to construct a path between $\alpha$ 
and $\beta$. This path stays in the same fibre and therefore in $\Phi$, so that components of $\GPhi$ 
are either fibres of $\pi_I$ over non-zero elements or contained in the zero fibre. In both cases, they are inflated from $I$.

\item[] 
    \item[(iii)] $\Psi$ is primitive:

\np 
Let $Z$ be the unique simple component with $\supp(Z)=S$ defined in 
Proposition~\ref{prop: special component in inflation of a prime} and let $X_1, X_2, \dots, X_n$ denote the 
other simple components, which lie in $X$.
Every $X_i$ is clearly inflated from $I$. We will now show that $Z$ is also inflated from $I$. 
It follows from Corollary~\ref{cor:sum of simples} that
$X_i + Z = Z$ for all $i$.

\np 
Let $\gamma \in Z$ 
and $\theta \in \pm I$ such that $\gamma + \theta \in R$. Since $\Phi$ is inflated from $I$, $\gamma + \theta \in \Phi$. 
If $\theta \in \pm \Phi^c$, then $\gamma + \theta$ is adjacent to $\gamma$ in $\GPhi$, 
so $\gamma + \theta \in Z$. If $\theta \in \Phi$, then the component $X_\theta$ of $\GPhi$ 
containing $\theta$ is simple and contained in $X$, so that $\gamma + \theta \in X_\theta + Z = Z$. 
Otherwise, if $\theta \in -\Phi$, then $\gamma + \theta \leq \gamma$ and so the component containing 
it is less than or equal to $Z$. Since $Z$ is minimal among components not contained in $X$ 
and $\pi_I(\gamma + \theta) = \pi_I(\gamma) \neq 0$, it follows that $\gamma + \theta \in Z$. 
This shows that $I \in \Gen(Z)$. With this, we have that all simple components are inflated from $I$.

\np  Since all components are sums of simple components by 
Proposition~\ref{prop: all components are standard sums of simple components} and sums of 
inflations from $I$ are inflations from $I$ by Proposition~\ref{prop: sums of inflations is an inflation}, 
every component is inflated from $I$. \end{enumerate} 

\np 
Since the three preceding cases cover all possibilities for $\Psi$ in the canonical expression of 
$\Phi$, the proof is complete. \end{proof}

\begin{corollary}
    \label{corollary: decompositions of canonical I inflations are I inflations}
    Let $\Phi, \Phi_1, \dots, \Phi_k$ be inversion sets such that $\Phi = \Phi_1 \sqcup \dots \sqcup \Phi_k$.
    Assume that $\Phi = \inf_I^S(\Psi, X)$ is the canonical form of $\Phi$.
    Then $I \in \Gen(\Phi_1)\cap \dots \cap \Gen(\Phi_k)$. \hfill $\square$
\end{corollary}

\subsection{Irreducibility of $\Phi$ and $\Comp$}
We start with establishing necessary and sufficient conditions for  an inversion set $\Phi$ 
to be irreducible in terms of its associated graph $\GPhi$.
\begin{theorem}
\label{theorem: irreducibility equivalent conditions}
    Let $\Phi$ be an inversion set. The following are equivalent:

    \begin{enumerate}[(i)]
        \item $\Phi$ is irreducible.

        \item Every component $C\in\Comp$ satisfies $\supp C = \supp \Phi$.

        \item The set $\Comp$ contains a unique simple component.

        \item There exists a component $Z \in \Comp$ such that $\Comp = \{ Z, 2Z, \dots, kZ \}$ for some $k \geq 1$.
    \end{enumerate}

\end{theorem}

\begin{proof}
We show that \textit{(i) $\implies$ (ii)} by contrapositive. If there were a component $C$ with $\supp C \subsetneq \supp \Phi$, 
then $\Phi = \inf_{\supp C}^S(\Psi, X)$ by Proposition~\ref{prop: supports of components are inflaters of Phi}. 
Because $\supp C \neq \supp \Phi$, $\Psi$ is non-empty. Since $C \subseteq  X$, $X$ is also non-empty, 
and thus $\Phi$ is reducible. 
That \textit{(ii) $\implies$ (iii)} follows immediately from 
Proposition~\ref{prop: at most one simple component of full support}. \textit{(iii) $\implies$ (iv)} because by 
Proposition~\ref{prop: all components are standard sums of simple components} every component can be written as 
the sum of simple components, and by \textit{(iii)} there is a unique simple component $Z$. 
To see that \textit{(iv) $\implies$ (i)}, 
suppose that $\Phi = \Phi_1 \sqcup \Phi_2$. Assume $Z \subseteq \Phi_i$. 
By the closure of $\Phi_i$, every other component is also contained in $\Phi_i$, 
so $\Phi_i = \Phi$. Since we began with an arbitrary decomposition, $\Phi$ must be irreducible.
\end{proof}

\begin{corollary} \label{cor: primitive implies irreducible}
    Every primitive inversion set is irreducible.
    Conversely, if $\mathrm{rk} \, R > 2$,
    then every irreducible inversion set 
    whose complement is also irreducible is necessarily primitive. \qed
\end{corollary}
\begin{proof}
The first assertion is clear by Proposition~\ref{prop: special component in inflation of a prime} 
combined with Theorem~\ref{theorem: irreducibility equivalent conditions}.  For the second assertion,
    suppose that $\Phi$ and $\Phi^c$ are irreducible but not primitive.
    Then there is an $I \subsetneq S$ with $\Phi = \inf_I^S(\Psi, X)$, and $\Phi^c = \inf_I^S(\Psi^c, X^c)$.
    Without loss of generality, assume $\Psi$ is not empty. 
    Then since $\Phi$ is irreducible, $X$ is empty.
    If $I$ is empty, then the fact that $\Phi$ is not primitive implies that $\Psi = R^+/I = R^+$.
    In the case that $I$ is not empty, irreducibility of $\Phi^c$ implies that one of 
    $\Psi^c, X^c = R_I^+$ is empty, therefore $\Psi^c = \emptyset$.
    We see that in every case, $\Phi = \inf_I^S(R^+/I, \emptyset)$ and $\Phi^c = \inf_I^S(\emptyset, R_I^+)$.
    The inversion sets $R^+_I$, and $R^+/I$ are irreducible only if they are both quotient 
    root systems of rank no more than $1$. Since $\mathrm{rk} \, R = \mathrm{rk} \, R_I + \mathrm{rk} \, R/I$, 
    we conclude that $\mathrm{rk} \, R \leq 2$.
\end{proof}

\begin{remark}
    The condition appearing in the above corollary that $\mathrm{rk} \, R > 2$ is necessary 
    as the following example from $\mathbb A_2$ shows.  
Take $\Phi=\{\theta_1,\theta_1+\theta_2\}$ and observe that both $\Phi$ and $\Phi^c = \{ \theta_2\}$ are irreducible. But since
$$ \Phi = \inf_{\{\theta_2\}}^{\{\theta_1,\theta_2\}}(\{\bar{\theta}_1\},\emptyset) \quad \quad \text{and} \quad \quad \Phi^c = \inf_{\{\theta_2\}}^{\{\theta_1,\theta_2\}}(\emptyset,\{\theta_2\}),$$
neither is primitive.

\end{remark}

\np 
Theorem~\ref{theorem: irreducibility equivalent conditions} implies that, using the 
structure of the graph $\GPhi$, one can determine whether an inversion set $\Phi \subseteq R^+$ 
is irreducible in time which is polynomial in $\mathrm{rk} \, R$. This is a significant improvement 
over a brute force search over possible decompositions of $\Phi$, since, for a root 
system $\Delta$, the number of inversion sets in $\Delta^+$ is at least $(\mathrm{rk} \, \Delta + 1)!$.

\begin{corollary} If $R$ is a QRS and 
$\Phi \subseteq R^+$ is an inversion set, then the time required
to determine the irreducibility of $\Phi$ is polynomial in $\mathrm{rk}\, R$.\qed
\end{corollary}

\section{\texorpdfstring{Addition in $\Comp$ via the canonical form of $\Phi$}{Addition in Comp via the canonical form of Phi}}
\label{sec:additive structure through}

\np 
Recall that $\pi_I$ is the canonical projection from $E$ onto $E / I$, the orthogonal complement
of $E_I = \Span I$ in $E$.

\begin{prop}
\label{prop: pi_I defines map from components of phi to those of psi}
    Let $\Phi = \inf_I^S(\Psi, X)$. If $C$ is a component of $\GPhi$, then 
$\pi_I(C) = \{ 0 \}$ or $\pi_I(C) = \bar C$ for some component $\bar C$ of $\GPsi$.
\end{prop}

\begin{proof}
    If $C \subseteq X$, then of course $\pi_I(C) = \{ 0 \}$.
    Otherwise, pick some $\alpha \in C$ and let $\bar C$ be the component 
    of $\GPsi$ containing $\bar \alpha := \pi_I(\alpha)$.
    To show that $\bar C \subseteq \pi_I(C)$, we take $\bar \beta$ to be a 
neighbour of $\bar \alpha$ in $\GPsi$ and show that $\bar \beta \in \pi_I(C)$.
    Since $\bar \alpha$ is adjacent to $\bar \beta$, 
$\bar \alpha + \bar \kappa = \bar \beta$ for $\bar \kappa \in \pm \Psi^c$.
    By Proposition~\ref{prop: lifts of sums in quotients} there exists some lifts 
$\kappa$ of $\bar \kappa$ and $\beta$ of $\bar \beta$ such that 
$\alpha + \kappa = \beta$, $\pi_I(\beta) = \bar \beta$, and $\pi_I(\kappa) = \bar \kappa$.
    Since $I \in \Gen(\Phi)$, $\kappa \in \pm \Phi^c$ and $\beta \in \Phi$.
    We therefore find that $\beta \in C$ and $\bar \beta \in \pi_I(C)$.
    It follows that $\pi_I(C) \subseteq \bar C$.
    To demonstrate the reverse inclusion, we suppose $\beta$ is a neighbour of 
$\alpha$ in $\GPhi$ and show that $\pi_I(\beta) \in \bar C$.
    Since $\beta$ is adjacent to $\alpha$, $\alpha + \kappa = \beta$ for some $\kappa \in \pm \Phi^c$.
    We either have that $\bar \kappa := \pi_I(\kappa) = 0$ or $\bar \kappa \in \pm \Psi^c$.
    In the first case, $\pi_I(\beta) = \bar \alpha \in \bar C$.
    In the latter, $\bar \alpha + \bar \kappa =  \pi_I(\beta)$, 
so $\bar \alpha$ and $\pi_I(\beta)$ are adjacent in $\GPsi$ and $\pi_I(\beta) \in \bar C$.
    We conclude that $\pi_I(C) = \bar C$.
\end{proof}

\begin{remark} \label{rem: pi_i is additive}
    \begin{enumerate}[(i)]
        \item Proposition~\ref{prop: pi_I defines map from components of phi to those of psi} 
allows us to extend $\pi_I$ to a map between $\Comp$ and ${\rm Comp}(\Psi) \cup \{\{0\}\}$.
\item The map $\pi_I$ respects addition, in the sense that if $A, B, C \in \Comp$ 
with $A + B = C$, then $\pi_I(A) + \pi_I(B) = \pi_I(C)$ where the addition in $\mathrm{Comp}(\Psi)$
is extended to an addition in $\mathrm{Comp}(\Psi) \cup \{\{0\}\}$ in the obvious way.
    \end{enumerate}
\end{remark}

\begin{prop}
    \label{prop: subgraph when intersecting with X is GX}
     Let $\Phi = \inf_I^S(\Psi, X)$. The full subgraph of $\GPhi$ whose vertices are the roots in $X$ is $\GX$.
\end{prop}

\begin{proof}
    If there is an edge in $\GX$ between two vertices $\alpha, \beta$ in $X$, then of 
course the edge is also present in $\GPhi$, since $X^c := R_I^+ \setminus X \subseteq \Phi^c$. 
Conversely, if there is an edge in $\GPhi$ between $\alpha, \beta \in X$, then $\alpha + \kappa = \beta$ 
for $\kappa \in \pm \Phi^c$. Since $\pi_I(\kappa) = \pi_I(\beta) - \pi_I(\alpha) = 0$, 
$\kappa \in \pm X^c$, so there is also an edge between $\alpha$ and $\beta$ in $\GX$.
\end{proof}

\begin{prop}
    \label{prop: pi_I is surjective on components and bijective when I is canonical}
    Consider the inflation $\Phi = \inf_I^S(\Psi, X)$.  Then $\pi_I$ induces a surjective map
from $\Comp \setminus \mathrm{Comp}(X)$ onto $\mathrm{Comp}(\Psi)$. When $\Phi$ is canonically inflated from $I$, this map is bijective. 
\end{prop}

\begin{proof}
    Surjectivity of the map is clear: pick $\bar \alpha \in \Psi$ and some $\alpha \in R$ 
such that $\pi_I(\alpha) = \bar \alpha$. Then the component containing $\alpha$ is projected onto 
the component containing $\bar \alpha$, and surjectivity follows. Next, suppose $\pi_I(C_1) = \pi_I(C_2)$ 
for components $C_1, C_2$ of $\GPhi$. Letting $\bar \beta \in \pi_I(C_1)$, we see that there must 
exist $\beta_1 \in C_1, \beta_2 \in C_2$ such that $\pi_I(\beta_1) = \pi_I(\beta_2) = \bar \beta$. 
Because of this, $\beta_1$ and $\beta_2$ belong to the same fibre of $\pi_I$. Since components of $\GPhi$ 
are inflated from $I$ when $I$ is canonical by Proposition~\ref{prop: components are inflated by the 
canonical}, we conclude that in this case $C_1 = C_2$ and $\pi_I$ is injective.
\end{proof}

\begin{prop} \label{addition within or without X}
    Let $\Phi = \inf_I^S(\Psi, X)$ when represented canonically. 
    Let $X_1, \dots, X_m \in\Comp$ be 
the components contained in $X$, and $Z_1, \dots, Z_n$ be the 
remaining elements of $\Comp$ (which are contained in $\Phi \setminus X$). Then:
    \begin{enumerate}[(i)]
        \item \label{X only}The addition of the components $X_1, \dots, X_m$ among themselves 
        is given by the addition table of $\mathrm{Comp}(X)$  in the QRS $R_I$.

        \item\label{Z only} The addition of the components $Z_1, \dots, Z_n$ among themselves 
        is given by the addition table of $\mathrm{Comp}(\Psi)$  in the QRS $R/I$.

        \item\label{Xs and Zs mixed} If $Z_i + X_j$ is defined, then $Z_i + X_j= Z_i$.
    \end{enumerate}
\end{prop}
\begin{proof}
    The first assertion is just Proposition~\ref{prop: subgraph when intersecting with X is GX}.  
    The second and third assertions follow immediately from 
    Proposition~\ref{prop: pi_I is surjective on components and bijective when I is canonical} and 
    Remark~\ref{rem: pi_i is additive}. 
\end{proof}

\begin{prop}\label{prop: addition is independent of bracketing}
Consider a collection of (not necessarily distinct) components $C_1,C_2,\dots,C_n\in \Comp$.
Suppose $T, T' \in \Comp$ are two sums of $C_1,C_2,\dots,C_n$ which may differ in the order of the summands and 
their parenthesization. 
Then $T=T'$.
\end{prop}

\begin{proof} 

The proof is by (strong) induction on $\rk R$. The base case is when the rank is $1$, which is clear. 
So assume that $\rk R \geq 2$. If $\Phi = R^+$, then the elements in $\Comp$ are the positive roots 
and the statement states that if two roots can be expressed using the same summands, then they are equal. 
This is obvious.  Consider now the case where $\Phi$ is primitive.  There are exactly $k$ components, all multiples of
 the single simple component $Z$ with
 $Z < 2Z < \dots < kZ$ where $k \geq 1$.  In this case the result follows easily from 
 Proposition~\ref{prop: Repeated addition satisfies standard properties}.

\np 
   Now suppose that $\Phi$ is not primitive and let $\Phi = \inf_I^S(\Psi,X)$
be the canonical expression for $\Phi$ as an inflation.  Recall that $I \subsetneq S$. 
Also, if $I = \emptyset$, since $\Phi$ is not primitive, this yields $\Phi = R^+$, which has been treated above. 
Hence, we may assume that $\emptyset \subsetneq I \subsetneq S$.
 Write $X_1,X_2,\dots,X_r$ to denote the elements of $\Comp$ contained in $X$
and $Z_1,Z_2,\dots,Z_s$ to denote the elements of $\Comp$ outside $X$.  
Since, by Proposition~\ref{addition within or without X}(iii), $X_j + Z_i = Z_i$ 
whenever this sum is defined, we may assume that a sum of components is a sum composed of only 
the $Z_i$ or is a sum composed of only the $X_j$. 

\np 
 In the latter case, the addition may be viewed as a sum of components of
$\GX$ and we consider the QRS $R_I$ with $1 \le \rk R_I < \rk R$. Although $R_I$ may not be irreducible, 
all the components $X_i$ must be components of an irreducible factor $C$ of $R_ I$. 
We proceed by induction since $1 \le \rk C \le \rk R_I < \rk R$. 
Alternatively, if the sum involves only the components 
$Z_1,Z_2,\dots,Z_s$ then the addition may be viewed as an addition 
of components of $\GPsi$.  In this case, we use induction as $\rk R/I < \rk R$.
\end{proof}

\begin{prop} \label{prop: more cancellation rules for component addition}
Let $A, B, C\in\Comp$.
\begin{enumerate}[(i)]
\item If $A + B + C = A + B$, then $A + C = A$ or $B + C = B$.
\label{second}
\item If $A + B + C = B + C$, then $A + B = B$ or $B + C = C$.
\label{third}
\end{enumerate}
\end{prop}

\begin{proof} 

Induction on $\rk R$.
As in the proof of Proposition~\ref{prop: addition is independent of bracketing}, we see that the statement holds in the 
following cases: (i) if $\rk R = 1$, (ii) if $\Phi = R^+$, or (iii) if $\Phi$ is primitive.

\np 
Assume that none of the cases (i), (ii), or (iii) holds and write $\Phi=\inf_I^S(\Psi,X)$ for the canonical
expression of $\Phi$. Since $\Phi$ is not primitive and $\Phi \ne R^+$, we conclude that $I \neq \emptyset$.
If $A,B,C$ all lie in $X$, then the addition involves components of
$\GX$ where $X \subseteq  \text{span } I$ and the result follows by induction, as the sum involves components in an 
irreducible factor of whose rank is strictly smaller than $\rk R$.  If none of $A,B,C$ is contained in $X$ 
then additions of $A$, $B$, and $C$ are governed by additions of their respective counterparts in 
$\GPsi$. Since $\Psi \subseteq  R^+/I$ and  $\rk R/I < \rk R$, the result follows by induction.

\np 
It remains to consider the cases where some of $A,B,C$ are contained in $X$ and some are not.
Suppose that $A + B + C = A + B$. If $C \subseteq X$, then $A+C = A$ or $B+C = B$ since at least one of 
$A$ and $B$ is not in $X$. If $C \not\subseteq X$, then one of $A, B$ is in $X$ and the other is not. Say $A\subseteq X$,
$B \not\subseteq X$. Then $A+B = B$ and hence $A+B+C = B+C$. Combining these two with the assumption that
$A+B+C = A+B$, we conclude that  $B+C = B$.

\np 
Suppose $A+B+C = B+C$. Applying Proposition~\ref{prop: first cancellation rule for component addition}
to $A':=C$, $B':= A+B$, and $C':= B$ in place of $A,B, C$, we conclude that either $B'=C'$ or $A'+C' = A'$, i.e.,
that $A + B = B$ or $B + C = C$.
\end{proof}

\section{Applications to GIT} \label{sec: GIT}

\np 
If $\Delta$ is a root system and $w$ is an element of the Weyl group of $\Delta$, 
the inversion set $\Phi(w)$ of $w$ is defined as
\[\Phi(w) := \{\alpha \in \Delta^+ \, | \, w(\alpha) \in \Delta^-\} = \Delta^+ \cap w^{-1}(\Delta^-) \ .\]
Taking inversion sets of elements of $W$ establishes a bijection between the Weyl group $W$ and
the inversion sets in $\Delta^+$: $\Phi(w)$ is an 
inversion set in $\Delta^+$ and, conversely, every inversion set in $\Delta^+$ equals $\Phi(w)$ 
for a unique $w \in W$. In this section we show how several problems which have recently arisen 
in geometric invariant theory can be studied using the methods developed in this paper.

\subsection{Decomposing inversion sets} \label{sec:8.1}
The problem of decomposing a given inversion set $\Phi(w)$ into the disjoint union of inversion 
sets $\Phi(w_1), \dots, \Phi(w_k)$ arises naturally in connection with different problems
in geometric invariant theory, e.g. the description of the faces of the Littlewood-Richardson cone or 
the Belkale-Kumar product on $H^*(G/B, \ZZ)$, see \cite{BK}, \cite{DR}, \cite{FR} for details. 
In \cite{DDMRWW} decompositions 

\np
\begin{equation} \label{equ_decomposition_0}
\Phi(w) = \Phi(w_1) \sqcup \Phi(w_2) \sqcup \dots \sqcup \Phi(w_k) 
\end{equation}
were studied for roots systems of types $\mathbb{A}, \mathbb{B}$, and $\mathbb{C}$. The approach in
\cite{DDMRWW} is based on the fact that the Weyl group of the root system $\mathbb{A}_n$ is the symmetric
group on $n+1$ elements and the Weyl groups of the root systems $\mathbb{B}_n$ and $\mathbb{C}_n$ can
be realized as groups of ``symmetric'' permutations. Consequently, the methods of \cite{DDMRWW} do not
extend beyond these three types of root systems. In this section we develop a type-independent
approach to studying the analogs of the decomposition \eqref{equ_decomposition_0} and other related problems. 
More precisely, we provide a description of decompositions like \eqref{equ_decomposition_0} which is recursive 
in terms of subsystems and quotients of $\Delta$. Consequently, the natural setting in which we 
study \eqref{equ_decomposition_0} and related topics is provided by quotient root systems.

\np
Let $R$ be a QRS. First we characterize all decompositions 

\begin{equation} \label{equ_decomposition_simeq}
R^+ = \Phi_1 \sqcup \Phi_2 \sqcup \dots \sqcup \Phi_k \ ,
\end{equation}
where $\Phi_i \subseteq \R^+$ is an inversion set for every $1 \leq i \leq k$.
Both inflations and the decomposition \eqref{equ_decomposition_simeq}
are compatible with the decomposition of a QRS into its irreducible components. More precisely,
if $R$ is a QRS with irreducible components $R^1, R^2, \dots, R^s$, then every decomposition of $R^+$ into
inversion sets yields a decomposition of each $R^i$ into inversion sets and vice-versa. As a consequence, 
it is sufficient to describe the decompositions \eqref{equ_decomposition_simeq} for irreducible QRSs. 

 \begin{theorem}
\label{theorem: main theorem}
    Let $R$ be an irreducible QRS and let
    \[R^+ = \Phi_1 \sqcup \Phi_2 \sqcup \dots \sqcup \Phi_k \ ,\] 
    where $\Phi_1, \Phi_2, \dots, \Phi_k$ are inversion sets. Assume that $\Phi_1$ contains the highest root of $R$. 
    If $\Phi_1 = \inf_I^S(\Psi, X_1)$ when expressed canonically, then up to a relabelling, 
    $\Phi_2 = \inf_I^S(\Psi^c, X_2)$ and $\Phi_i = \inf_I^S(\emptyset, X_i)$ for $i > 2$.
\end{theorem}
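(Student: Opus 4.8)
The plan is to leverage the accumulated machinery on canonical forms, the graph $\GPhi$, and the component structure. The key realization is that the decomposition $R^+ = \Phi_1 \sqcup \cdots \sqcup \Phi_n$ can be analyzed through the lens of whether each $\Phi_i$ is inflated from $I$, where $I$ is the canonical inflater of $\Phi_1$. By Corollary~\ref{corollary: decompositions of canonical I inflations are I inflations}, applied iteratively (or more carefully, grouping $\Phi_2 \sqcup \cdots \sqcup \Phi_n = \Phi_1^c$), we get $I \in \Gen(\Phi_1^c)$, but we need more: that \emph{each} $\Phi_i$ is inflated from $I$. This should follow by noticing that every component $C$ of $G_{\Phi_1^c}^{(\Phi_1^c)^c}$ decomposes further, or better, by applying Proposition~\ref{prop: components are inflated by the canonical} and Proposition~\ref{prop: decompositions don't split components}: since each $\Phi_i$ (for $i \geq 2$) is a union of components of $G_{\Phi_1^c}^{\Phi_1}$ (the graph for the inversion set $\Phi_1^c$), and I will argue these components are all inflated from $I$.

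First I would fix $I$ as the canonical inflater of $\Phi_1$, write $\Phi_1 = \inf_I^S(\Psi, X_1)$, and observe $\Phi_1^c = \inf_I^S(\Psi^c, X_1^c)$. The decomposition $\Phi_2 \sqcup \cdots \sqcup \Phi_n = \Phi_1^c$ exhibits $\Phi_1^c$ as a union of inversion sets; by Proposition~\ref{prop: decompositions don't split components} each $\Phi_i$ ($i \geq 2$) is a union of components of $G_{\Phi_1^c}^{\Phi_1}$. The heart of the argument is to show each such component is inflated from $I$: this is where I would invoke the structure theory. Since $\Phi_1^c$ is itself an inversion set and $I \in \Gen(\Phi_1^c)$, I want an analogue of Proposition~\ref{prop: components are inflated by the canonical} valid when $I$ is in $\Gen$ but not necessarily canonical; alternatively, I would descend to the quotient. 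Applying $\pi_I$, the decomposition of $R^+$ pushes to a decomposition $(R/I)^+ = \Psi_1 \sqcup \cdots$ where $\Psi_1 = \Psi$. Because $\Psi$ is primitive (case (i) of Theorem~\ref{theorem: canonical inflation}), it is irreducible by Corollary~\ref{cor: primitive implies irreducible}, so $\Psi^c$ must appear as a \emph{single} block of the quotient decomposition — i.e., $\pi_I(\Phi_2) \supseteq \Psi^c$ up to relabelling and all other $\pi_I(\Phi_i)$ are empty, meaning $\Phi_i \subseteq R_I^+$ for $i \geq 3$, hence $\Phi_i = \inf_I^S(\emptyset, X_i)$. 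This forces $\Phi_2 = \inf_I^S(\Psi^c, X_2)$.

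There are two cases of Theorem~\ref{theorem: canonical inflation} to handle beyond the primitive one: $\Psi = \emptyset$ and $\Psi = (R/I)^+$. When $\Psi = \emptyset$, $\Phi_1 \subseteq R_I^+$ so $\Phi_1$ contains no root projecting nonzero; but $\Phi_1$ contains the highest root $\tau$ of $R$, and $\supp \tau = S \neq I$ (here using $R$ connected), a contradiction — so this case is vacuous. When $\Psi = (R/I)^+$, we have $\pi_I(\Phi_1) \supseteq (R/I)^+$, so $\pi_I(\Phi_i) = \{0\}$ for all $i \geq 2$, meaning each $\Phi_i \subseteq R_I^+$, giving $\Phi_i = \inf_I^S(\emptyset, X_i)$ for $i \geq 2$; here $\Psi^c = \emptyset$ so the claimed formula $\Phi_2 = \inf_I^S(\Psi^c, X_2) = \inf_I^S(\emptyset, X_2)$ is consistent with relabelling. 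So the substantive case is genuinely the primitive one.

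\textbf{The main obstacle} I anticipate is justifying rigorously that each $\Phi_i$ ($i \geq 2$) is inflated from $I$ — equivalently that $\pi_I(\Phi_i) \cap \pi_I(\Phi_i^c) \subseteq \{0\}$ — which requires knowing the fibres of $\pi_I$ over nonzero roots of $R/I$ are not split among the $\Phi_i$. For $\Phi_1$ this is given (it is inflated from $I$), and $\Phi_1^c = \bigsqcup_{i \geq 2} \Phi_i$ is inflated from $I$; but a union being inflated from $I$ does not formally force each summand to be. The fix is Proposition~\ref{prop: decompositions don't split components}: a fibre $\pi_I^{-1}(\bar\alpha)$ with $\bar\alpha \neq 0$, being connected by elements of $I \subseteq \Phi_1^c \subseteq (\Phi_1^c)$-graph... wait, one must check the fibre lies inside a \emph{single} component of $G_{\Phi_1^c}^{\Phi_1}$. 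Since $\Phi_1^c \supseteq \pm I$-steps connecting the fibre (as $I \subseteq \supp \Phi_1^c$ when $\Psi^c \neq \emptyset$, which holds since $\Psi$ primitive means $\Psi \neq R/I$ so $\Psi^c \neq \emptyset$), the whole fibre is one component, hence lies in one $\Phi_i$; so each $\Phi_i$ is inflated from $I$. After this, pushing everything to $R/I$ and using irreducibility of the primitive $\Psi$ finishes the proof. I would present the primitive case in full and dispatch the other two cases quickly as above.
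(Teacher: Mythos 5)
Your overall strategy matches the paper's: rule out $\Psi=\emptyset$ via the highest root, dispatch $\Psi=(R/I)^+$ by noting every $\Phi_i$ with $i\geq 2$ sits inside $\Phi_1^c\subseteq R_I^+$, and in the primitive case show $I\in\Gen(\Phi_i)$ for all $i\geq 2$, push to the quotient, and invoke irreducibility of the primitive set $\Psi^c$. The gap is in your justification of the key step, that each $\Phi_i$ ($i\geq2$) is inflated from $I$. You argue that each $\Phi_i$ is a union of components of $G_{\Phi_1^c}^{\Phi_1}$ (correct, by Proposition~\ref{prop: decompositions don't split components}) and that each fibre $\pi_I^{-1}(\bar\alpha)$, $\bar\alpha\neq 0$, contained in $\Phi_1^c$ is a \emph{single} component of that graph because it is connected by $\pm I$-steps lying in $\Phi_1^c$. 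This has the edge condition backwards: two vertices of $G_{\Phi_1^c}^{\Phi_1}$ are adjacent when their difference lies in $\pm(\Phi_1^c)^c=\pm\Phi_1$. A step from $\pm R_I$ joining two elements of a fibre is an edge only if it lies in $\pm X_1=\pm(\Phi_1\cap R_I^+)$; steps lying in $X_1^c\subseteq\Phi_1^c$ are precisely \emph{not} edges, and $X_1^c$ can certainly be non-empty (e.g.\ $X_1=\emptyset$ is allowed). Connectivity of a fibre inside $G_{\Phi_1^c}^{\Phi_1}$ is therefore not automatic; establishing it is exactly the content of the hardest case of Proposition~\ref{prop: components are inflated by the canonical}, which requires the whole apparatus of component addition, simple components, and the partial order. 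As written, your step fails.

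The repair is already available and you half-see it: you say you would want "an analogue of Proposition~\ref{prop: components are inflated by the canonical} valid when $I$ is in $\Gen$ but not necessarily canonical," but no analogue is needed. By the remark following Theorem~\ref{theorem: canonical inflation}, if $\Phi_1$ is canonically inflated from $I$ then so is $\Phi_1^c$; hence Proposition~\ref{prop: components are inflated by the canonical} applies verbatim to $\Phi_1^c$, and together with Proposition~\ref{prop: decompositions don't split components} it gives Corollary~\ref{corollary: decompositions of canonical I inflations are I inflations}: $I\in\Gen(\Phi_i)$ for every $i\geq 2$. With that substitution, the remainder of your argument --- writing $\Phi_i=\inf_I^S(\Xi_i,X_i)$, obtaining $\Psi^c=\Xi_2\sqcup\cdots\sqcup\Xi_n$, and concluding via the fact that $\Psi^c$ is primitive (Remark~\ref{rem: phi primitive iff phi complement primitive}) and hence irreducible (Corollary~\ref{cor: primitive implies irreducible}) --- is exactly the paper's proof.
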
 

\begin{proof}
    We note that $\Psi \neq \emptyset$, since otherwise $\supp \Phi_1 = I \subsetneq S$, 
    and $\Phi_1$ would not contain the highest root of $R$. We therefore split our argument into the two remaining cases:

    \begin{enumerate}[(i)]
        \item $\Psi = (R / I)^+$:
        In this case the result is clear, since $\Phi_i \subseteq \Phi_1^c = \inf_I^S(\emptyset, X_1^c)$ for each $i \geq 2$.

        \item $\Psi$ is primitive:
        Note that $\inf_I^S(\Psi^c, X_1^c) = \Phi_1^c = \Phi_2 \sqcup \Phi_3 \sqcup \dots \sqcup \Phi_k$, 
        so by Corollary~\ref{corollary: decompositions of canonical I inflations are I inflations}, 
        $I \in \Gen(\Phi_i)$ for each $i \geq 2$. Writing $\Phi_i = \inf_I^S(\Xi_i, X_i)$, 
        we find that $\Psi^c = \Xi_2 \sqcup \Xi_3 \sqcup \dots \sqcup \Xi_k$. Since $\Psi^c$ is 
        primitive and therefore irreducible by Corollary~\ref{cor: primitive implies irreducible}, 
        all but one $\Xi_i$ is empty. Labelling the non-empty set $\Xi_2$, we find that 
        $\Phi_2 = \inf_I^S(\Psi^c, X_2)$ and $\Phi_i = \inf_I^S(\emptyset, X_i)$ for $i > 2$. \qedhere
    \end{enumerate}
\end{proof}

\begin{corollary} \label{cor: support in decompositions}
Let, as in Theorem~\ref{theorem: main theorem}, $R$ be an irreducible QRS and 
    \[R^+ = \Phi_1 \sqcup \Phi_2 \sqcup \dots \sqcup \Phi_k \ .\] 
Then $\supp \Phi_i = S$ for either one or two values of $i$. More precisely, 
$\supp \Phi_i = S$ for a unique $i$ if one of the sets $\Phi_1, \dots, \Phi_k$ is inflated from $(R/I)^+$ and 
$\supp \Phi_i = S$ for two values of $i$ if two of the sets $\Phi_1, \dots, \Phi_k$ are inflated from
(complementary) primitive inversion sets in $(R/I)^+$. \hfill $\square$
\end{corollary}

\begin{remark} \label{rem: symmetric and non-symmetric}
Theorem~\ref{theorem: main theorem} allows us to describe the decompositions of a given inversion set 
$\Phi \subseteq R^+$ into a disjoint union of inversion sets. 
Let $\Phi \subseteq R^+$, $\Phi = \inf_I^S(\Psi, X)$ when expressed canonically. 
We want to describe all decompositions 

\begin{equation} \label{eq: decomposition in QRS}
\Phi = \Phi_1 \sqcup \Phi_2 \sqcup \dots \sqcup \Phi_k \ . 
\end{equation}
First we note that, by Corollary~\ref{corollary: decompositions of canonical I inflations are I inflations},
$\Phi_i = \inf_I^S(\Psi_i, X_i)$ for every $1 \leq i \leq k$. We consider three cases depending on $\Psi$:

\begin{enumerate}
\item[(i)] If $\Psi = \emptyset$, then $\Psi_i = \emptyset$ and \eqref{eq: decomposition in QRS} is equivalent
to decomposing $X$ as  $X = X_1 \sqcup X_2 \sqcup \dots \sqcup X_k$.

\item[(ii)] If $\Psi$ is primitive, then \eqref{eq: decomposition in QRS} is equivalent to
\[R^+ = \Phi^c \sqcup \Phi_1 \sqcup \dots \sqcup \Phi_k \]
and Corollary~\ref{cor: support in decompositions} implies that, after relabeling, we have $\Psi_1 = \Psi$,
$\Psi_i = \emptyset$ for $2 \leq i \leq k$ and $X = X_1 \sqcup X_2\sqcup\dots \sqcup X_k$. 

\item[(iii)] If $\Psi = (R/I)^+$, then \eqref{eq: decomposition in QRS} is equivalent to
$(R/I)^+ = \Psi_1 \sqcup \Psi_2 \sqcup \dots \sqcup \Psi_k$ and $X = X_1 \sqcup X_2 \sqcup \dots \sqcup X_k$.
\end{enumerate}
Note that the decomposition $X = X_1 \sqcup X_2 \sqcup \dots \sqcup X_k$ above is in the proper subsystem $R_I$ of $R$.
The decomposition $(R/I)^+ = \Psi_1 \sqcup \Psi_2 \sqcup \dots \sqcup \Psi_k$ in case (iii) is in the root system $R/I$
which is of lower rank than $R$ except for the case when $I = \emptyset$. The latter is exactly the case treated
by Theorem~\ref{theorem: main theorem}. Thus, in view of Theorem~\ref{theorem: main theorem}, 
all decompositions \eqref{eq: decomposition in QRS}
can be described recursively in terms of such decompositions in quotients and subsystems of $R$.
\end{remark}

\subsection{A basis of $E$ related to a decomposition \eqref{eq: decomposition in QRS}}
Let $\Delta$ be a root system in the Euclidean space $E$  and $w \in W$. Set 
\[S(w) := \{\alpha \in \Delta^+ \, | \, -w(\alpha) {\text { is a simple root}}\} \ .\]
In the case when $\Delta$ is of type $\mathbb A$, it was proved in \cite[Proposition 8.1]{DDMRWW} that, if 
\[\Delta^+ = \Phi(w_1) \sqcup \Phi(w_2) \sqcup \Phi(w_3) \ ,\]
then $S(w_1), S(w_2), S(w_3)$ are disjoint and their union is a basis of $E$. This result was then used to
show that the corresponding face of the Littlewood-Richardson cone is simplicial and to provide
an efficient algorithm for determining its generating rays. Next we will prove that the analog of
\cite[Proposition 8.1]{DDMRWW} holds true for every root system. It is not difficult then to show that
this analog implies the analogs of the results derived as consequences of \cite[Proposition 8.1]{DDMRWW}.
We will not discuss these application here as the interested reader will be able to fill in the details 
following the exposition of \cite{DDMRWW}. 

\np Let $R$ be a QRS and let $\Phi \subseteq R^+$ be an inversion set. To define the set 
$S(\Phi)$ we note that $\Phi = R^+ \cap P$ for 
a (unique) positive system $P \subseteq R$. Set
\[S(\Phi) := \{ \alpha \in \Phi \, | \, \alpha {\text { is a simple root of }} P \} \ .\]
If $R$ is a root system and $w \in W$, then $S(w) = S(\Phi(w))$ because $\Phi(w) = R^+ \cap w^{-1}(R^-)$.

\begin{proposition} \label{prop: basis from decomposition}
Let $R^+ = \Phi_1 \sqcup \Phi_2 \sqcup \dots \sqcup \Phi_k$. Then $S(\Phi_1), S(\Phi_2), \dots,
S(\Phi_k)$ are disjoint sets whose union is a basis of $E$.
\end{proposition}

\begin{proof}
Since $\Phi_1, \dots, \Phi_k$ are disjoint and $S(\Phi_i) \subseteq \Phi_i$, the sets $S(\Phi_1), \dots, S(\Phi_k)$
are also disjoint. We prove that their union is a basis of $E$ by induction on the rank of $R$. If $\rk R = 1$
the statement is obvious. For the rest of the proof we assume that the statement is true 
for every QRS of rank less than $r$ and consider a QRS $R$ of rank $r$.

\np 
Let $R^+ = \Phi_1 \sqcup \Phi_2 \sqcup \dots \sqcup \Phi_k$ where
$\Phi_i = \inf_I^S(\Psi_i, X_i)$ as in Theorem~\ref{theorem: main theorem} and let $W:= \Span I$.
For every $1 \leq i \leq k$ set
\[
S'(\Phi_i):= S(\Phi_i) \cap W \quad {\text{and}} 
\quad S''(\Phi_i) = S(\Phi_i) \backslash S'(\Phi_i) \ .
\]
Since $R_I^+ = X_1 \sqcup X_2 \sqcup \dots \sqcup X_k$, we have 
$X_i = (R^+ \cap W)\cap P = R_I^+ \cap (P \cap W)$ and hence $S(X_i) = S'(\Phi_i)$. 
Thus, by induction,
$S'(\Phi_1) \cup \dots \cup S'(\Phi_k)$ is a basis of $W$.

\np 
Next we consider the two possible cases for $\Phi_1$:
\begin{enumerate}
\item[(i)] $\Psi_1 = (R/I)^+$:

\np 
Then $\Psi_i = \emptyset$ for $i > 1$ and hence $S''(\Phi_i) = \emptyset$ for $i>1$.
Let $P$ be the positive system of $R$ for which $\Phi_1 = R^+ \cap P$. Label the simple roots of $P$
as $\theta_1', \dots, \theta_n'$ so that 
$\theta_1', \dots, \theta_m'$ belong to $W$ and $\theta_{m+1}', \dots, \theta_n'$ do not belong to $W$. 
Proposition~\ref{prop: Gen(Phi) from hyperplane} implies that $W$ is $P$-Levi subspace of $E$, which is 
equivalent to saying that $\theta_1', \dots, \theta_m'$ is a basis of $W$. Moreover, 
$P \backslash W \subseteq R^+$. Indeed, if $\alpha \in P\backslash W$ belongs to $R^-$, then
$-\alpha \in R^+ \backslash W$ belongs to $\Phi_1$ which would imply that $-\alpha \in P$,
contradicting the assumption that $\alpha \in P$.
In particular, $\theta_j' \in P$ for $m+1 \leq j \leq r$, proving that
$S''(\Phi_1) = \{\theta_{m+1}', \dots, \theta_r'\}$.  Hence
\[
S(\Phi_1) \cup \dots \cup S(\Phi_k) = S''(\Phi_1) \cup (S'(\Phi_1) \cup \dots \cup S'(\Phi_k))
=\{\theta_{m+1}', \dots, \theta_r'\} \cup (S'(\Phi_1) \cup \dots \cup S'(\Phi_k)) \ .
\]
Noting that both $S'(\Phi_1) \cup \dots \cup S'(\Phi_k)$ and $\theta_1', \dots, \theta_m'$ are bases of $W$
and $\theta_1', \dots, \theta_r'$ is a basis of $E$, we conclude that $S(\Phi_1) \cup \dots \cup S(\Phi_k)$
is a basis of $E$.

\item[] 

\item[(ii)] $\Psi_1$ is primitive:

\np
Then $\Psi_2 = \Psi_1^c$ and $\Psi_i = \emptyset$ for $i>2$.
Let $P_1$ and $P_2$ be the positive systems in $R$ such that $\Phi_1 = R^+ \cap P_1$ and $\Phi_2 = R^+ \cap P_2$.
By Proposition~\ref{prop: Gen(Phi) from hyperplane}, $W$ is both $P_1$-Levi and $P_2$-Levi subspace of $E$.
Set $\bar{P}_i := \pi_I(P_i)$ for $i = 1,2$. Then $\Psi_i = (R/I)^+ \cap \bar{P}_i$. 
Since $\Psi_1 \sqcup \{0\} \sqcup \Psi_2 = (R/I)^+$, we have
\[
\Psi_2 = \Psi_1^c = ((R/I)^+ \cap \bar{P}_1)^c = (R/I)^+ \cap (-\bar{P}_1)
\]
and hence $\bar{P}_2 = - \bar{P}_1$. Label the bases of
$\bar{P}_1$ and $\bar{P}_2$  as $\bar{\theta}_{m+1}', \dots, \bar{\theta}_{r}'$ and 
$\bar{\theta}_{m+1}'', \dots, \bar{\theta}_{r}''$ respectively so that $\bar{\theta}_{j}'' = - \bar{\theta}_{j}'$
for $m+1 \leq j < r$. Finally, label the bases of $P_1$ and $P_2$ as $\{\theta_1', \dots, \theta_r'\}$ and
$\{\theta_1'', \dots, \theta_r''\}$ so that $\theta_j', \theta_j'' \in W$ for $1 \leq j \leq m$ and
$\pi_I(\theta_j') = \bar{\theta}_j'$ and $\pi_I(\theta_j'') = \bar{\theta}_j''$ for $m+1 \leq j \leq r$.

\np
Since $S''(\Phi_1) = \{\theta_{m+1}', \dots, \theta_r'\} \cap R^+$ and 
$S''(\Phi_2) = \{\theta_{m+1}'', \dots, \theta_r''\} \cap R^+$, we have 
$\pi_I(S''(\Phi_1)) = \{\bar{\theta}_{m+1}', \dots, \bar{\theta}_r'\} \cap (R/I)^+$ and
$\pi_I(S''(\Phi_2)) = \{\bar{\theta}_{m+1}'', \dots, \bar{\theta}_r''\} \cap (R/I)^+$ because $W$ is 
is both a $P_1$-Levi and a $P_2$-Levi subspace of $E$. Using the fact that $\bar{\theta}_{j}'' = - \bar{\theta}_{j}'$,
we conclude that $\pi_I(S''(\Phi_1))$ and $\pi_I(S''(\Phi_2))$ are disjoint and 

\begin{equation} \label{eq:8.77}
\pi_I(S''(\Phi_1) \cup S''(\Phi_2)) = \pi_I(S''(\Phi_1)) \cup \pi_I(S''(\Phi_2)) = 
\{\vep_{m+1} \bar{\theta}_{m+1}', \dots, \vep_n \bar{\theta}_{r}'\} \ ,
\end{equation}
where $\vep_j = \pm 1$ depending on whether $\bar{\theta}_{j}'$ belongs to $(R/I)^+$ or $(R/I)^-$.
The argument in (i) above shows that $\{\bar{\theta}_{m+1}', \dots, \bar{\theta}_{r}'\}$
is a basis of $E/I$ and thus by \eqref{eq:8.77} so is $\pi_I(S''(\Phi_1) \cup S''(\Phi_2))$.

\np
Finally, since
\[
S(\Phi_1) \cup \dots \cup S(\Phi_k) = (S''(\Phi_1) \cup S''(\Phi_2)) \cup (S'(\Phi_1) \cup \dots \cup S'(\Phi_k)) \ ,
\]
$S'(\Phi_1) \cup \dots \cup S'(\Phi_k)$ is a basis of $W$, and $\pi_I(S''(\Phi_1) \cup S''(\Phi_2))$ is a
basis of $E/I$, we conclude that $S(\Phi_1) \cup \dots \cup S(\Phi_k)$ is a basis of $E$. \qedhere
\end{enumerate}
\end{proof}

\np 
\begin{remark} \label{rem:paper on dimensions}
Proposition~\ref{prop: basis from decomposition} implies \cite[Conjecture 1]{HP} for
any finite Weyl group. 
\end{remark}

\subsection{A theorem of Francone and Ressayre} \label{sec:Ressayre}
In \cite{FR} it is proved that all nonzero coefficients in the Belkale-Kumar product on $G/B$ equal one. 
An important step in the proof of the main theorem is a statement about inversion sets, \cite[Theorem 2]{FR}.
In a previous version the authors provided a long case-by-case proof of \cite[Theorem 2]{FR}. In the latest
version, they provide a shorter proof based on the results of the previously posted version of this paper. 
Below we provide two short proofs of a statement which implies \cite[Theorem 2]{FR}. 

\begin{proposition} \label{prop:Ressayre}
Let $\Phi_1, \Phi_2, \Phi_3$ be inversion sets in a QRS $R$ such that
\[
R^+ = \Phi_1 \sqcup \Phi_2 \sqcup \Phi_3 \ . 
\]
Assume that $\beta, \gamma \in R^+$ be such that  $\beta, \beta + \gamma \in \Phi_1$ and $\gamma \in \Phi_2$.
Then $[\beta, \gamma] \cap \Phi_3 = \emptyset$.
\end{proposition}

\begin{proof}[Proof using $\GPhi$.]
Without loss of generality we assume that $R$ is irreducible.
Let $\Phi = \Phi_1 \sqcup \Phi_3$. Then $\beta$ and $\beta + \gamma$ belong to the same component $C$ of $\GPhi$.
Assume, by way of contradiction, that there is $\alpha \in [\beta, \gamma] \cap \Phi_3$ and denote by $C'$ the component of
$\GPhi$ containing $\alpha$. Then
\[\beta \leq \alpha \leq \gamma \leq \beta + \gamma\]
implies that $C \leq C' \leq C$. Hence $C = C'$ which is impossible because $\Phi = \Phi_1 \sqcup \Phi_3$
and $C \subseteq \Phi_1$ while $C' \subseteq \Phi_3$.
\end{proof}

\begin{proof}[Proof using Theorem~\ref{theorem: main theorem}.]
Induction on the rank of $R$, the case $\rk R = 1$ being obvious. Assume $\rk R > 1$.
By Corollary~\ref{cor: support in decompositions} there is a proper subsystem $R_I$ such that one of the sets 
$\Phi_1, \Phi_2, \Phi_3$ is contained in $R_I$. If both $\beta$ and $\gamma$ are contained in $R_I$, then 
the statement follows by induction because the statement reduces to a statement about elements of $R_I$. 
For the rest of the proof we assume that not both $\beta$ and $\gamma$ are contained in $R_I$. 

\np 
If $\Phi_1 \subseteq R_I$ or $\Phi_2 \subseteq R_I$, then $\gamma \in R_I$ and either $\beta \in R_I$ or
$[\beta, \gamma] = \emptyset$ and we are done. If $\Phi_3 \subseteq R_I$ and 
$[\beta, \gamma] \cap \Phi_3 \neq \emptyset$, then $\beta \in R_I$ and , according to the assumption, 
$\gamma \not \in \Delta_I$. Then 
\[\pi_I (\gamma) = \pi_I(\beta + \gamma) \neq 0 \ ,\]
which is a contradiction with the assumption that $\beta + \gamma \in \Phi_1$ but $\gamma \in \Phi_2$.
\end{proof}

\section{Enumerative results} \label{sec: enumerate}

\np 
In this section we show how the methods and results developed so far can be applied to obtain various statistics 
related to inversion sets in root systems. 
For example, one can ask how many primitive (inversions) sets does a given root system (or a given QRS) have. 
In the cases of root systems of types $\mathbb{A}$, $\mathbb{B}$, and $\mathbb{C}$, functional equations for 
the generating functions of the number of primitive inversion sets are known, \cite{AAK} and \cite{DDMRWW}. 
However, these equations do not yield explicit formulas for the their numbers. On the other hand, an estimate 
in the case of root systems of type $\mathbb{A}$ is known. 

\np
In Sections~\ref{sec:9.1} -- \ref{sec:9.3} we discuss three enumerative problems: 
The first one concerns the number of {\it fine decompositions} of a root system, i.e., decompositions in which 
every inversion set contains a single simple root, see below.
The second one discusses the number of primitive inversion sets of a root system.
The third one discusses the number of inversion sets which occur in fine decompositions. 
These are just a couple of examples of integral sequences arising from inversion sets. 
We have chosen to include them because they illustrate the applicability of our methods and results and 
because some of the sequences already appear in the OEIS in different contexts. In particular, we propose
an analog of Catalan numbers related to root systems of type $\mathbb{D}$ (the analog related to root 
systems of types $\mathbb{B}$ and $\mathbb{C}$ already appeared in \cite{DDMRWW}).

\subsection{Fine decompositions} \label{sec:9.1}
Recall that a decomposition $R^+ = \Phi_1 \sqcup \Phi_2 \sqcup \dots \sqcup \Phi_n$ of the 
positive roots of a QRS $R$ is said to be fine if $n = \rk R$ and each inversion set is non-empty.

\np 
The aim of this section is to use Theorem~\ref{theorem: main theorem} to count the number of fine 
decompositions of the root systems. 
The paper \cite{DDMRWW} derived formulae for the number of fine decompositions of $\mathbb A_n$ 
and $\mathbb B_n$ but not for $\mathbb D_n$. 
With the hindsight of the more natural definition of inflation we 
see that part of the difficulty arises from the fact that there are multiple types of rank 2 
quotients of $\mathbb D_n$, which have different numbers of primitive inversion sets.  
As we will see, Theorem~\ref{theorem: main theorem} allows us to treat each root system with a 
uniform approach. We begin by introducing a few relevant propositions.

\begin{prop}
    \label{prop: basic properties of fine decompositions}
    Let $R^+ = \Phi_1 \sqcup \Phi_2 \sqcup \dots \sqcup \Phi_n$ be a fine decomposition of a QRS $R$ with base $S$. Then
  $|\Phi_i \cap S| = 1$ for $1\leq i \leq n$.
Moreover, there exists $I \subseteq S$ such that either
    \begin{enumerate}[(i)]
    \item $\rk (R/I)=1$ with
    $\Phi_1 = \inf_I^S((R/I)^+, \emptyset)$ and 
           $\Phi_i = X_i = \inf_I^S(\emptyset,X_i)$ for $2 \leq i \leq n$; 
           \item[]or
    \item $\rk(R / I) = 2$ with  
    $\Phi_1 = \inf_I^S(\Psi, \emptyset)$ and 
    $\Phi_2 = \inf_I^S(\Psi^c, \emptyset)$ for some primitive $\Psi \subseteq (R/I)^+$ and 
    $\Phi_i = X_i = \inf_I^S(\emptyset,X_i)$ for $3 \leq i \leq n$.
     \end{enumerate}
     Furthermore, the set $I$ and the sets $\Psi$ and $X_i$ are unique up to reordering.
\end{prop}

\begin{proof}
    By co-closure, $|\Phi_i \cap S| \geq 1$ for each $i$. The $n$ simple roots of $S$ are divided into $n$ 
    pairwise disjoint inversion sets in such a way that each inversion set contains at least one simple root. 
    This forces $|\Phi_i \cap S| = 1$ for all $i$. 

    \np To prove that $\rk(R / I) \leq 2$, we consider the decompositions 
    $(R/I)^+ = \Psi_1 \sqcup \Psi_2 \sqcup \dots \sqcup \Psi_n$ and 
    $(R_I)^+ = X_1 \sqcup X_2 \sqcup \dots \sqcup X_n$ defined in Theorem~\ref{theorem: main theorem} 
    from $R^+ = \Phi_1 \sqcup \Phi_2 \sqcup \dots \sqcup \Phi_n$.
    Since $|\Phi_i \cap S| = |\Psi_i \cap S/I| + |X_i \cap I|$, we conclude that $|\Psi_i \cap S/I| = 1$ 
    unless $\Psi_i = \emptyset$. By Corollary~\ref{cor: support in decompositions}, exactly
    one or two of the sets $\Psi_1, \dots, \Psi_k$ are non-empty, so we conclude that $\rk(R / I) \leq 2$.
\end{proof}

\np 
Proposition~\ref{prop: basic properties of fine decompositions} allows us to count fine 
decompositions of a QRS $R$ with rank $n$ as follows. We first take $I \subseteq S$ 
where $|I| = n - 1$ or $|I| = n - 2$. If $|I| = n - 1$, we let $\Phi_1 = \inf_I^S((R / I)^+, \emptyset)$ 
and $\Phi_2, \Phi_3, \dots, \Phi_n$ be a fine decomposition of $R_I$. If $|I| = n - 2$, we choose a 
primitive inversion set $\Psi \subseteq  (R / I)^+$ and let $\Phi_1 = \inf_I^S(\Psi, \emptyset)$ 
and $\Phi_2 = \inf_I^S(\Psi^c, \emptyset)$. Once again, we let $\Phi_3, \Phi_4, \dots, \Phi_n$ 
form a fine decomposition of $R_I$. It is clear that in both cases $\Phi_1, \Phi_2, \dots, \Phi_n$ forms a 
fine decomposition of $R$, and by Proposition~\ref{prop: basic properties of fine decompositions}, 
this characterizes all fine decompositions. Introducing some notation, 
we let $S = \{ \theta_1, \theta_2, \dots, \theta_n \}$ be a base of the QRS $R$
and let $F(R)$ denote the number of fine decompositions of $R$.
Then

\begin{equation}
\label{eqn: fine decomposition}
    F(R) = \sum_{i = 1}^n F(R_{\{ \theta_i \}^c}) + \sum_{i = 1}^{n - 1} \sum_{j = i + 1}^n 
    \Pi(R / \{\theta_i, \theta_j \}^c)F(R_{\{\theta_i, \theta_j \}^c})
\end{equation}
where $\Pi(R) := \dfrac12 \left|\{ \Psi \subseteq  R^+ \mid \Psi \text{ is a primitive inversion set}\}\right|$. 
We introduce one more proposition before counting the fine decompositions of all root systems.

\np 
Recall that if $R_1, R_2$ are QRSs with ambient Euclidean spaces $E_1, E_2$, then we have a 
QRS $R_1 \times R_2$ with ambient Euclidean space 
${\mathbb E}_1 \oplus E_2$. 
Its roots are identified with $R_1 \cup R_2$. It is realized as a quotient of the root 
system $\Delta_1 \times \Delta_2$ where $R_i$ is a quotient of $\Delta_i$ for $i=1,2$.

\np 
In the following, we write $\mathbb{A}_0 = \mathbb{B}_0 = \mathbb{D}_0$ to denote the rank zero 
root system with trivial ambient Euclidean space. 
We also use the convention that 
$\mathbb{D}_1 = \mathbb{B}_1 = \mathbb{A}_1$, 
$\mathbb{D}_2 = \mathbb{A}_1 \times \mathbb{A}_1$, 
and $\mathbb{D}_3=\mathbb{A}_3$. The proof of the following follows from the remark in the 
paragraph preceding Theorem~\ref{theorem: main theorem}. 

\begin{prop}
    Suppose $R_1$ and $R_2$ are QRSs. Then $F(R_1 \times R_2) = F(R_1) F(R_2)$.
    \hfill\qedsymbol
\end{prop}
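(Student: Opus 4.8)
The plan is to exhibit a bijection between the fine decompositions of $R_1 \times R_2$ and the pairs consisting of a fine decomposition of $R_1$ together with a fine decomposition of $R_2$; the multiplicativity of $F$ is then immediate. First I would record the elementary but crucial observation that, since the ambient Euclidean spaces $E_1$ and $E_2$ are orthogonal, every root of $R_1$ is orthogonal to every root of $R_2$, so $\alpha + \beta$ is never a root of $R_1 \times R_2$ when $\alpha \in R_1$ and $\beta \in R_2$. Closure and co-closure of a subset $\Phi \subseteq (R_1 \times R_2)^+ = R_1^+ \sqcup R_2^+$ therefore only involve sums taken within a single factor, so $\Phi$ is an inversion set of $R_1 \times R_2$ if and only if $\Phi \cap R_1^+$ is an inversion set of $R_1$ and $\Phi \cap R_2^+$ is an inversion set of $R_2$; in particular a subset of $R_1^+$ is an inversion set of $R_1 \times R_2$ precisely when it is one of $R_1$. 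I would also use that $\rk(R_1 \times R_2) = \rk R_1 + \rk R_2$ and that every non-empty inversion set contains a simple root: by the hyperplane description in Remark~\ref{rem:1.20} such a set $\Phi$ consists of the roots on one side of a hyperplane, and if it contains a root $\alpha = \sum_i d_i \theta_i$ then some $\theta_i$ in the support of $\alpha$ lies on the same side, hence in $\Phi$.

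Starting from a fine decomposition $(R_1 \times R_2)^+ = \Phi_1 \sqcup \cdots \sqcup \Phi_n$ with $n = \rk R_1 + \rk R_2$, the first key step is to show that each $\Phi_j$ is contained entirely in $R_1^+$ or entirely in $R_2^+$: if both $\Phi_j \cap R_1^+$ and $\Phi_j \cap R_2^+$ were non-empty, each would contain a simple root, one from the base $S_1$ of $R_1$ and one from the base $S_2$ of $R_2$, contradicting $|\Phi_j \cap S| = 1$ from Proposition~\ref{prop: basic properties of fine decompositions}(i). Hence the index set partitions as $A \sqcup B$ with $\Phi_j \subseteq R_1^+$ for $j \in A$ and $\Phi_j \subseteq R_2^+$ for $j \in B$, so that $\{\Phi_j\}_{j \in A}$ decomposes $R_1^+$ into $|A|$ non-empty inversion sets of $R_1$ and $\{\Phi_j\}_{j \in B}$ decomposes $R_2^+$ into $|B|$ non-empty inversion sets of $R_2$. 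Since the parts are pairwise disjoint and each contains a simple root, $|A| \leq |S_1| = \rk R_1$ and $|B| \leq |S_2| = \rk R_2$; as $|A| + |B| = n = \rk R_1 + \rk R_2$, both inequalities are equalities, so the two restricted decompositions are fine. Conversely, given fine decompositions $R_1^+ = \bigsqcup_j \Psi_j$ and $R_2^+ = \bigsqcup_k \Xi_k$, the first paragraph shows that their union $\{\Psi_j\} \cup \{\Xi_k\}$ is a decomposition of $(R_1 \times R_2)^+$ into $\rk R_1 + \rk R_2 = \rk(R_1 \times R_2)$ non-empty inversion sets, hence fine. These two constructions are visibly mutually inverse, which gives the bijection and therefore $F(R_1 \times R_2) = F(R_1) F(R_2)$. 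Equivalently, one may phrase this as an application of Remark~\ref{rem: decomposing disconnected QRSs}, the only additional input being that no part of a fine decomposition straddles the two factors.

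I do not anticipate a real obstacle here: the argument is essentially bookkeeping, and the single delicate point is checking that fineness is preserved in both directions, which is exactly where $\rk(R_1 \times R_2) = \rk R_1 + \rk R_2$ and the ``one simple root per part'' property of Proposition~\ref{prop: basic properties of fine decompositions}(i) enter. One should also keep in mind that $R_1$ or $R_2$ may itself be disconnected --- for instance $\mathbb{D}_2 = \mathbb{A}_1 \times \mathbb{A}_1$ already occurs in the conventions preceding the statement --- but no step above uses connectedness of the factors, so this causes no difficulty.
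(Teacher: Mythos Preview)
Your proposal is correct and follows essentially the same approach as the paper: the paper's entire proof is the single sentence ``follows from Remark~\ref{rem: decomposing disconnected QRSs},'' and you have filled in precisely the details that remark leaves implicit, namely that each part of a fine decomposition of $R_1\times R_2$ lies entirely in one factor and that fineness is preserved in both directions. Your use of Proposition~\ref{prop: basic properties of fine decompositions}(i) to force each $\Phi_j$ into a single factor is exactly the missing step, and your observation that connectedness of the factors is nowhere used is a nice point of care.
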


\subsubsection{Type \texorpdfstring{$\mathbb A_n$}{A root systems}.} 

We take some $n \geq 1$ and consider the fine decompositions of the 
root system $\mathbb A_n$. To do this, we first consider the quotients and subsystems of $\mathbb A_n$. 
It is not hard to see that all quotients of $\mathbb A_n$ are also of type $\mathbb A_k$ for some $k \leq n$. 
Moreover, if $I = \{ \theta_{i_1}, \theta_{i_2}, \dots, \theta_{i_m} \}$ are simple roots for some 
$1 \leq i_1 < i_2 < \dots < i_m \leq n$, then 
$(\mathbb A_n)_{I^c} = \mathbb A_{i_1 - 1} \times \mathbb A_{i_2 - i_1 - 1} \times \dots \times \mathbb A_{n - i_m}$. 
Defining $a_n := F(\mathbb A_n)$ to be the number of fine decompositions of $\mathbb A_n$, we apply 
Equation \eqref{eqn: fine decomposition} to find that
        $$a_n = \sum_{i = 1}^n a_{i - 1}a_{n - i} + \sum_{i = 1}^{n - 1}\sum_{j = i + 1}^n 
        \Pi(\mathbb A_2) a_{i - 1}a_{j - i - 1}a_{n - j}$$
        It is quickly verified that there are no primitive inversion sets in $\mathbb A_2$, so $\Pi(\mathbb A_2) = 0$. Thus,
        $$a_n = \sum_{i = 1}^n a_{i - 1}a_{n - i}$$
        along with the fact that $a_0 = 1$, this recurrence relation implies that $a_n = \frac{1}{n+1}\binom{2n}{n}$, 
        the $n$-th Catalan number. As is well-known, the corresponding generating function 
        $A(z):= \sum_{n=0}^\infty a_n z^n$ of the sequence of Catalan numbers is
\begin{equation*}
    A(z) = \frac{1 - \sqrt{1-4z}} {2z} \ .
\end{equation*}

\subsubsection{Types \texorpdfstring{$\mathbb B_n$ and $\mathbb C_n$}{B and C root systems}.}
Since these two types produce the same results, we consider only type $\mathbb B_n$ and 
proceed similarly to above. However, unlike type $\mathbb A$, the 
quotients of root systems of type $\mathbb B$ are not necessarily of type $\mathbb B$. However, the indivisible 
elements (i.e., the roots $\alpha$ for which no fractional multiple of $\alpha$ is also a root) in a quotient of a root system of type 
$\mathbb B$ form a root system of type $\mathbb B$ (see \cite{DF}). 
Thus Remark~\ref{rem: inversion Sets lie between two hyperplanes} implies that counting the inversion sets
    (and the respective decompositions)
    in a quotient of a root system of type $\mathbb B$ is equivalent to counting the inversion sets (and the respective decompositions)
    in the corresponding root system of type $\mathbb B$.

    \np 
    Taking $n \geq 1$, if $I = \{ \theta_{i_1}, \theta_{i_2}, \dots, \theta_{i_m} \}$ are simple 
    roots for some $1 \leq i_1 < i_2 < \dots < i_m \leq n$, 
    then $(\mathbb B_n)_{I^c} \cong \mathbb A_{i_1 - 1} \times \mathbb A_{i_2 - i_1 - 1} \times \dots \times \mathbb B_{n - i_m}$.
    Unlike in $\mathbb A_2$, there are two primitive inversion sets in $\mathbb B_2$, so $\Pi(\mathbb B_2) = 1$. 
    Once again applying Equation \eqref{eqn: fine decomposition}, we define $b_n := F(\mathbb B_n)$ to be the 
    number of fine decompositions of $\mathbb B_n$ and find
     \begin{align*}
        b_n &= \sum_{i = 1}^n a_{i - 1}b_{n - i} + \sum_{i = 1}^{n - 1}\sum_{j = i + 1}^n a_{i - 1}a_{j - i - 1}b_{n - j} 
        = \sum_{i = 1}^n a_{i - 1}b_{n - i} + \sum_{\substack{k + \ell + m = n - 2 \\ k, \ell, m \geq 0}} a_k a_\ell b_m \\
        &= \sum_{i = 1}^n a_{i - 1}b_{n - i} + \sum_{i = 2}^n\sum_{j = 1}^{i - 1} a_{i - j - 1}a_{j - 1}b_{n - i} 
        = \sum_{i = 1}^n a_{i - 1}b_{n - i} + \sum_{i = 2}^n b_{n - i} \sum_{j = 1}^{i - 1} a_{i - j - 1}a_{j - 1} \\
        &= \sum_{i = 1}^n a_{i - 1}b_{n - i} + \sum_{i = 2}^n b_{n - i}a_{i - 1} 
        = b_{n - 1} + 2\sum_{i = 2}^n a_{i - 1}b_{n - i}
    \end{align*}
where the second-to-last equality follows from the recurrence relation
$a_{i-1} = \sum_{j = 1}^{i - 1} a_{i - j - 1}a_{j - 1}$ from the previous subsection. The first few values of 
the sequence $(b_n)_{n \geq 1}$ are as follows: $1, 3, 9, 29, 97, 333, 1165$.  
The generating function  $B(z) := \sum_{n=0}^\infty b_n z^n$ of this sequence is
\begin{equation*}
    B(z) = \frac{1} { z + \sqrt{1-4z}}
\end{equation*}
as shown in \cite{DDMRWW}.

 \subsubsection{Type \texorpdfstring{$\mathbb D_n$}{ D root systems}.} 
    Let $I := \{\theta_i,\theta_j\}^c$, where $i < j$. Then for $i,j \in \{1, n-1, n\}$, 
    the QRS is of type $\mathbb A_2$, which has no primitive inversion sets. 
    Whereas in the other cases, the quotient has the same number of primitive inversion sets as $\mathbb B_2$.
    The simple roots in $I$ span a root system isomorphic to
    \begin{equation*}
        \mathbb D_n^{(i,j)} := \begin{cases}
            \mathbb A_{i-1} \times \mathbb A_{j-i-1} \times \mathbb D_{n-j} &\textnormal{ if $j < n-1$} \\
            \mathbb A_{i-1} \times \mathbb A_{n-i-1}                        &\textnormal{ if $j = n-1$ or $j=n$}
        \end{cases}
    \end{equation*}
    When $I = \{\theta_i\}^c$, the roots in $I$ span a root system isomorphic to
    \begin{equation*}
        \mathbb D_n^{(i)} := \begin{cases}
            \mathbb A_{i-1} \times \mathbb D_{n-i} &\textnormal{ if $i < n-1$} \\
            \mathbb A_{n-1}                        &\textnormal{ if $i = n-1$ or $i=n$}
        \end{cases}
    \end{equation*}
    
    \np {\allowdisplaybreaks  
    We now let $d_n := F(\mathbb D_n)$ for $n \geq 4$
    and $d_0=d_1=d_2=1$ and $d_3=a_3=5$.  Then for $n\geq 3$ we have
    \begin{align*}
        d_n =& \sum_{i=1}^{n} F(\mathbb D_n^{(i)}) 
        + \sum_{i=1}^{n-1}\sum_{j=i+1}^n \Pi(\mathbb D_n/\{\theta_i, \theta_j \}^c) F(\mathbb D_n^{(i,j)})  \\ 
        =& \sum_{i=1}^{n-2} a_{i-1} d_{n-i} + 2 a_{n-1} 
        + \sum_{i=1}^{n-3}\sum_{j=i+1}^{n-2} \Pi(\mathbb D_n/\{\theta_i, \theta_j \}^c) F(\mathbb D_n^{(i,j)}) \\
        &+ \sum_{i=1}^{n-2}\Pi(\mathbb D_n/\{\theta_i, \theta_{n-1} \}^c) F(\mathbb D_n^{(i,n-1)}) 
        + \sum_{i=1}^{n-1}\Pi(\mathbb D_n/\{\theta_i, \theta_n \}^c) F(\mathbb D_n^{(i,n)}) \\
        =& \sum_{i=1}^{n-2} a_{i-1} d_{n-i} + 2 a_{n-1} 
        + \sum_{i=1}^{n-3}\sum_{j=i+1}^{n-2} a_{i-1}a_{j-i-1}d_{n-j} \\
        &+ \sum_{i=2}^{n-2}\Pi(\mathbb D_n/\{\theta_i, \theta_{n-1} \}^c) F(\mathbb D_n^{(i,n-1)}) 
        + \sum_{i=2}^{n-2}\Pi(\mathbb D_n/\{\theta_i, \theta_n \}^c) F(\mathbb D_n^{(i,n)}) \\
        =& \sum_{i=1}^{n-2} a_{i-1} d_{n-i} + 2 a_{n-1} 
        + \sum_{i=1}^{n-3}\sum_{j=i+1}^{n-2} a_{i-1}a_{j-i-1}d_{n-j}
        + \sum_{i=2}^{n-2} a_{i-1}a_{n-i-1} 
        + \sum_{i=2}^{n-2} a_{i-1}a_{n-i-1} \ .
\end{align*}
} 

Now

\begin{align*} 
 \sum_{i=1}^{n-3} &\sum_{j=i+1}^{n-2} a_{i-1} a_{j-i-1}d_{n-j}
= \sum_{j=2}^{n-2} \sum_{i=1}^{j-1} a_{i-1} a_{j-i-1}d_{n-j}
= \sum_{j=2}^{n-2} \sum_{i=0}^{j-2} a_{i} a_{j-i-2}d_{n-j}\\
&= \sum_{j=0}^{n-4} \sum_{i=0}^{j} a_{i} a_{j-i}d_{n-j-2}
= \sum_{j=0}^{n-4} \left(\sum_{i=0}^{j} a_{i} a_{j-i}\right) d_{n-j-2}
= \sum_{j=0}^{n-4} a_{j+1} d_{n-j-2}\ .
\end{align*}

\np 
Therefore 
   \begin{align*}
        d_n =& \sum_{i=1}^{n-2} a_{i-1} d_{n-i} + 2 a_{n-1} 
        + \sum_{i=1}^{n-3}\sum_{j=i+1}^{n-2} a_{i-1}a_{j-i-1}d_{n-j} 
        + \sum_{i=2}^{n-2} a_{i-1}a_{n-i-1} 
        + \sum_{i=2}^{n-2} a_{i-1}a_{n-i-1} \\
=& a_0 d_{n-1} + \sum_{i=2}^{n-2} a_{i-1} d_{n-i} + 2 a_{n-1} 
+  \sum_{j=0}^{n-4} a_{j+1} d_{n-j-2}
        + 2\sum_{i=2}^{n-2} a_{i-1}a_{n-i-1} \\
=& d_{n-1} + \sum_{i=0}^{n-4} a_{i+1} d_{n-i-2}  + 2 a_{n-1} 
        +  \sum_{j=0}^{n-4} a_{j+1} d_{n-j-2}
        + 2\sum_{i=2}^{n-2} a_{i-1}a_{n-i-1} \\
=&  d_{n-1} + 2\sum_{i=0}^{n-4} a_{i+1} d_{n-i-2} + 2 a_{n-1} 
        + 2 \sum_{i=1}^{n-3} a_{i} a_{n-i-2}\\
=& d_{n-1} -2a_{n-2} d_1 -2 a_{n-1} d_0 + 2\sum_{i=1}^{n-1} a_{i} d_{n-i-1} + 2 a_{n-1}
         -2a_0 a_{n-2} -2a_{n-2}a_0 + 2 \sum_{i=0}^{n-2} a_{i} a_{n-i-2}\\
=& d_{n-1} + 2\sum_{i=0}^{n-2} a_{i+1} d_{n-i-2} -6a_{n-2} + 2 \sum_{i=0}^{n-2} a_{i} a_{n-i-2}\\
=& d_{n-1} + 2\sum_{i=0}^{n-2} a_{i+1} d_{n-i-2} + 2a_{n-1} -6a_{n-2}\ .
\end{align*}

\np 
The first few values of the sequence $(d_n)_{n \geq 1}$ are: 1, 1, 5, 19, 69, 249.

\np 
The generating function $D(z) := \sum_{n=0}^\infty d_n z^n$ of this sequence is
\begin{equation*}
    D(z) = \frac{ 2z^2 - 5z + 2 + (3z-1)\sqrt{ 1-4z}} { z + \sqrt{ 1-4z}}\ .
\end{equation*}

\subsubsection{Exceptional types.}
For $\mathbb{G}_2$ there are 5 fine decompositions.
For $\mathbb{F}_4$ there are 46 fine decompositions.
Computer constructions show that the root systems
$\mathbb{E}_6$, $\mathbb{E}_7$, and $\mathbb{E}_8$ have 320, 1534, and 8932 
fine decompositions respectively.

\subsection{Primitive inversion sets} \label{sec:9.2}

\np 
In Table 1 are some data on the number of primitive inversion sets.  Albert, Atkinson and Klazar 
gave a generating function, defined recursively for the $\mathbb{A}_n$ case (see \cite{AAK}).

\np
We wish to count the number of primitive inversion sets in 
some QRS $R$.  Using the canonical form of an inflation we may
relate the number of inversion sets in $R$ to the number
of primitive inversion sets in a quotient of $R$.

\np
Given a QRS $R$ we write $\Inv(R)$ to denote the number of inversion sets in $R$, $\Pr(R)$ to denote the number of
primitive inversion sets in $R$ and $\D(R)$ to denote the number of 
inversions sets in $R$ of full support. (Note that in Section~\ref{sec:9.1} above we 
introduced the notation $\Pi(R)$ to denote one-half of
the number of primitive inversion sets in $R$ i.e., $\Pr(R) = 2\Pi(R)$.) 
Here 
$\Inv(\emptyset)=\D(\emptyset)=1$. 

\np
It is clear that
if $R=R^1 \times R^2$ is reducible then
$\Inv(R)=\Inv(R^1)\Inv(R^2)$ and $\D(R)=\D(R^1)\D(R^2)$.
Thus it will be sufficient to consider irreducible QRSs.

\np
Since every inversion set has full support in exactly one subsystem of R, i.e., 
for exactly one choice of $I$, we have
$$
 \Inv(R) = \sum_{ \emptyset \subseteq I \subseteq S} \D(R_I) \ .
$$

\begin{proposition}
$$
\Inv(R) = \sum_{ \emptyset \subseteq I \subsetneq S}
              \left( \Inv(R_I)\Pr(R/I) + 2\D(R_I) \right) \ .
$$
\end{proposition}
\begin{proof}
   Let $\phi \subseteq R$ be an inversion set and write
$\phi = \inf_I^S(\Psi,X)$ for the canonical form of $\phi$.
If $\psi$ is primitive then $X$ is any inversion set contained 
in $I$.  There are $\Pr(R/I)\Inv(R_I)$ such inversion sets.
Otherwise either $\psi=(R/I)^+$ or $\psi=\emptyset$.  
One of these 
two possibilities occurs for $\phi$ and the other for $\phi^c$.
If $\psi=\emptyset$ then $\supp X = I$ and thus there are
$D(I)$ choices for $X$.  Accounting for 
both $\phi$ and $\phi^c$ we see that there are $2D(I)$ 
inversion sets inflated from $I$ with $\psi$ not primitive.
\end{proof}

\np 
Isolating one term in each of the above equations we have
$$
 \D(R) = \Inv(R)-\sum_{ \emptyset \subseteq I \subsetneq S} \D(R_I)
$$
and
$$ \Pr(R) = \Inv(R)-2-\sum_{ \emptyset \subsetneq I \subsetneq S}
    \left( \Inv(R_I)\Pr(R/I) + 2\D(R_I) \right)\ .
$$

\np 
Using the above two equations we may recursively solve for
$\D(R_I)$ and $\Pr(R/I)$ for every $I$ if we know 
each $\Inv(R_I)$.  The difficulty lies in working with
subsystems of quotients.  Fortunately we have the 
following isomorphism of QRSs which allows us to replace
a subsystem of a quotient by a quotient of a subsystem.

\begin{lemma} Assume that $I$ and $J$ are disjoint subsets of $S$. Then
  $$ (R/I)_{\overline{J}} \cong \left( R_{I \cup J}\right) / I\ ,$$
  where $\overline{J} \subseteq S/I$ is the image of $J$ under the projection $S \twoheadrightarrow S/I$.
\end{lemma}

\begin{proof}
Set $K := (I \cup J)^c$ and let
\[
I = \{\theta_1', \dots, \theta_l'\} \ , \quad J = \{\theta_1'', \dots, \theta_m''\} \ , \quad
K = \{\theta_1''', \dots, \theta_s'''\} \ .
\]
For $\gamma \in R$ we denote the image of $\gamma$ under the surjection $R \twoheadrightarrow R/I$ by $\bar{\gamma}$.
Then
\[
R/I = \{ \bar{\gamma} = \sum_j a_j'' \bar{\theta}_j'' + \sum_k a_k''' \bar{\theta}_k''' \, | \,
{\text { there is }} \gamma = \sum_i a_i' \theta_i' + \sum_j a_j'' \theta_j'' + \sum_k a_k''' \theta_k''' \in R \}
\]
and hence
\[(R/I)_{\overline{J}} = \{ \bar{\gamma} = \sum_j a_j'' \bar{\theta}_j''  \, | \,
{\text { there is }} \gamma = \sum_i a_i' \theta_i' +  \sum_j a_j'' \theta_j'' \in R \} \ .
\]
Similarly,
\[
R_{I\cup J} = \{ \gamma \in R \, | \,  \gamma = \sum_i a_i' \theta_i' + \sum_j a_j'' \theta_j'' \}
\]
and hence 
\[(R_{I \cup J})/I = \{ \bar{\gamma} = \sum_j a_j'' \bar{\theta}_j''  \, | \,
{\text { there is }} \gamma \in R_{I \cup J}, \gamma = \sum_i a_i' \theta_i' +  \sum_j a_j'' \theta_j'' \} \ .
\]
Comparing the expressions above, we conclude that $(R/I)_{\overline{J}} = \left( R_{I \cup J}\right)/I$.
\end{proof}

\np
The references \cite{DF}, \cite{CH}, and \cite{CM} 
together provide $\Inv(R)$ for all root systems. 
Using the values of $\Inv(R)$ and the above equations, we can find all the data in 
Table~\ref{table:number of primitives}.  In fact, with the
exception of $\Pr({\mathbb E}_8)$ and $\D({\mathbb E}_8)$
all the required data may be determined by generating all the
inversion sets for each QRS and performing exhaustive counts.
Results for ${\mathbb E}_7$ took several hours on a Mac Air using the computer algebra system 
Magma (\cite{magma}).  Directly computing this data for ${\mathbb E}_8$ would be several orders of magnitude more difficult.

\np 
In principle one can also write explicitly the recursive formulas computing 
$\Pr(\Delta)$ for every root system $\Delta$. The first remark is that if $R$ is 
a quotient of rank $n$ of a root system other then $\mathbb{A}_n$, then the Weyl groupoid of $R$, cf.~\cite{CH},
is one of $n+1$ Weyl groupoids. Moreover, the number of inversion sets of $R$ depends on its Weyl
groupoid and is known. The difficulty then is that different positive systems in $R$ may have 
different number of primitive inversion sets which makes the notation very cumbersome for
root systems of type $\mathbb{D}$, so we decided to not pursue this direction.

\begin{table}[h]
    \centering
    \begin{tabular}{crrrrrr}
Rank $r$ & $\mathbb{A}_r$ & $\mathbb{B}_r/\mathbb{C}_r$  & $\mathbb{D}_r$ & $\mathbb{E}_r$ & $\mathbb{F}_4$& $\mathbb{G}_2$\\
        1 & 2   &             & \\
        2 & 0  & 2           & & & & 6 \\
        3 & 2 & 10          & \\
        4 & 6    & 90          & 30 & & 514&\\
        5 & 46    & 966         & 366\\
        6 & 338   & 12\,338     & 5018 &16\,058\\  
        7 & 2926  & 181\,470    & 76\,958 & 1\,247\,086 \\ 
        8 & 28146 & 3\,018\,082 & 1\,314\,946 &400\,658\,018\\  
        9 & 298\,526 &55\,995\,486 &24\,856\,542  & \\ 
        \multicolumn{1}{c}\vdots &  \multicolumn{1}{c}\vdots &\multicolumn{1}{c}\vdots &\multicolumn{1}{c}\vdots\\
         \\
    \end{tabular}
    \caption{Number of Primitive Inversion Sets}
    \label{table:number of primitives}
\end{table}

\subsection{Fine inversion sets} \label{sec:9.3}
\np 
Next we study some statistics concerning fine inversion sets, i.e., inversion sets which contain
only one simple root.

\np
Let $\Delta$ be an irreducible root system and 
let $S$ be the fixed base of $\Delta$. 
Let $\Phi = \inf_I^S(\Psi,X)$ be an inversion set. Since
\[|\Phi \cap S| = |\Psi \cap (S/I)| + |X \cap I| \ ,\]
$\Phi$ is fine if and only if one of the sets $\Psi$ and $X$ is fine and the other one is empty. 
Let $S':= \supp \Phi$ and let $\Delta'$ be the subsystem of $\Delta$ generated by $S'$. Since $X$
is empty or fine, we conclude that $\Delta'$ is an irreducible subsystem of $\Delta$.
Moreover, $\Phi \subseteq \Delta'$ and 

\begin{equation}  \label{eq:fine inversion}
\Phi= \inf_J^{S'}(\Psi', \emptyset) \ ,
\end{equation}
where $\Psi'$ is fine and primitive or $\rk (\Delta'/J) = 1$ and $\Psi' = (\Delta'/J)^+$. In short, we have the following.

\begin{proposition} \label{prop: fine inversion sets}
Equation \eqref{eq:fine inversion} establishes a bijection between the fine inversion sets in $\Delta$ and
the pairs $(\Delta'/J, \Psi')$, where $\Delta'/J$ is a subquotient of an irreducible subsystem $\Delta'$ of $\Delta$
and $\Psi' \subseteq (\Delta'/J)^+$ is fine and primitive or $\rk(\Delta'/J) = 1$ and $\Psi' = (\Delta'/J)^+$.
\hfill $\square$
\end{proposition}

\np
Proposition~\ref{prop: fine inversion sets} reduces any problem of counting fine inversion sets in $\Delta$
to counting fine and primitive inversion sets in (irreducible) subquotients of $\Delta$. With the exception of
root systems of type $\mathbb{A}$, see Remark~\ref{rem: fine for A}, the latter problem is difficult.
Restricting our attention to fine inversion sets which occur in a fine decomposition simplifies the problem
because it imposes the additional condition 
\[\rk (\Delta'/J) \leq 2 \]
to the pairs $(\Delta'/J, \Psi')$ in Proposition~\ref{prop: fine inversion sets}, 
cf.~Proposition~\ref{prop: basic properties of fine decompositions}.
Hence, to count the fine inversion sets which occur in a fine decomposition, we need to
understand all rank 2 subquotients of $\Delta$. If $\Delta$ is a root system
and $R$ is an irreducible rank 2 subquotient of $\Delta$, then $R^+$ is one of the following three sets:
\[\{\alpha, \beta, \alpha + \beta\}, \quad 
\{\alpha, \beta, \alpha + \beta, \alpha + 2 \beta\}, \quad 
\{\alpha, \beta, \alpha + \beta, \alpha + 2 \beta, 2 \alpha + 2 \beta\}\ .\]

\np If $\Delta$ is a root system, Propositions~\ref{prop: basic properties of fine decompositions} 
and \ref{prop: fine inversion sets} combined 
with the discussion above, allow us to count the inversions sets that occur in at least one fine decomposition 
of $\Delta^+$ sorted by the respective simple roots they contain as well as the number of such inversion sets which, 
in addition, contain the highest root of $\Delta$.  We leave the details to the interested reader and provide the results below.
The simple roots $\theta_1, \dots, \theta_n$ of $\Delta$ are labeled as in \cite{B}.

\np 
If $\Delta$ is of type $\mathbb{A}_n$, let $a_{n,k}$ denote the number of fine
inversion sets that occur in a fine decomposition and contain the root $\theta_k$ and let 
 $\tilde{a}_{n,k}$ denote the fine inversion set that, in addition, contain the highest root of $\Delta$.
 Then
 \[a_{n,k} = k(n-k+1) \quad \quad {\text{and}} \quad \quad \tilde{a}_{n,k} =1 \ .\]

\np
If $\Delta$ is of type $\mathbb{B}_n$, the analogous numbers $b_{n,k}$ and $\tilde{b}_{n,k}$ are given by
\[b_{n,k} = \frac{1}{2}k(4n-3k+1) \quad \quad {\text{and}} \quad \quad 
\tilde{b}_{n,k} = \left\{\begin{array}{ccl} 1 & {\text{if}} & k = 1 \\  n-k+2 &{\text{if}} & 1 < k \leq n
\ . \end{array} \right.\]

\np
If $\Delta$ is of type $\mathbb{C}_n$, the analogous numbers $c_{n,k}$ and $\tilde{c}_{n,k}$ are given by
\[c_{n,k} = \frac{1}{2}k(4n-3k+1) \quad \quad {\text{and}} \quad \quad 
\tilde{c}_{n,k} = \left\{\begin{array}{ccl} k+1 & {\text{if}} & 1 \leq k <n \\ 1 &{\text{if}} &  k = n
\ . \end{array} \right.\]

\np
If $\Delta$ is of type $\mathbb{D}_n$, the analogous numbers $d_{n,k}$ and $\tilde{d}_{n,k}$ are given by
\[d_{n,k} = \left\{\begin{array}{ccl}
\frac{k(4n-3k+3)-4}{2} & {\text{if}} & 1 \leq k < n-1 \\ &&\\
\frac{n (n-1)}{2} & {\text{if}} & n-1 \leq k \leq n \end{array} \right. 
 \quad {\text{and}} \quad 
\tilde{d}_{n,k} = \left\{\begin{array}{ccl} 1 & {\text{if}} & k = 1, n-1, n \\ &&\\ 
n-k+2 &{\text{if}} & 1 < k < n-1
\ . \end{array} \right.\]

\np 
For the exceptional root systems the data are:\\ \\
$\mathbb{G}_2$: [5, 5], [2, 3]\\ \\
$\mathbb{F}_4$: [13, 20, 20, 13], [6,6,5,3]\\ \\
$\mathbb{E}_6$: $\Big[\Esix{15\ }{22\ }{28\ }{40\ }{28\ }{15}\Big]$, 
$\Big[\Esix{1\ }{7\ }{5\ }{8\ }{5\ }{1}\Big]$\\ \\
$\mathbb{E}_7$: $\Big[\Eseven{29\ }{40\ }{51\ }{69\ }{56\ }{40\ }{22}\Big],
\Big[\Eseven{10\ } {7\ } {11\ } {11\ } {9\ } {6\ } 1 \Big]$\\ \\
$\mathbb{E}_8$: 
$\Big[\Eeight{54\ }{78\ }{88\ }{116\ }{104\ }{83\ }{64\ }{37}\Big],
\Big[ \Eeight{10\ }{13\ }{14\ }{17\ }{18\ }{17\ }{17\ }{16}\Big]$ .\\

\begin{remark} \label{rem:on dicrepancy}
Note that $b_{n,k} = c_{n,k}$ as one can expect since the Weyl groups of the root systems $\mathbb{B}_n$
and $\mathbb{C}_n$ are the same. However, $\tilde{b}_{n,k} \neq \tilde{c}_{n,k}$ because the expressions
of the respective highest roots are different.
\end{remark}

\begin{remark} \label{rem: fine for A}
If $\Delta$ is of type $\mathbb{A}_n$, counting fine inversion sets is simpler because all irreducible 
subquotients of $\Delta$ are of type $\mathbb{A}$ as well. Here are a few results:
\begin{enumerate}
\item[(i)] The number of fine inversion sets in $\mathbb{A}_n$ containing $\theta_k$ is $\binom{n+1}{k} -1$.
\item[(ii)] The number of all fine inversion sets in $\mathbb{A}_n$ is $2^{n+2}-(n+2)$.
\item[(iii)] If $n > 1$ is odd, there is one fine and primitive inversion set in $\mathbb{A}_n$;
if $n$ is even, there are no fine and primitive inversion sets in $\mathbb{A}_n$.
\end{enumerate}
These results can be derived either using Propositions~\ref{prop: basic properties of fine decompositions} 
and \ref{prop: fine inversion sets} or directly by interpreting the elements of the Weyl group of $\mathbb{A}_n$
as permutations of $1, 2, \dots, n+1$, cf.~\cite{DDMRWW}.
\end{remark}

\np 
{\bf {Acknowledgement.}}
 All authors were partially supported by the
Natural Sciences and Engineering Research Council of Canada. In addition, C.P. and D.W.  
were partially supported by the Canadian Defence
Academy Research Programme.

\end{document}